\documentclass[reqno]{amsart}

\usepackage{amsfonts}
\usepackage{amssymb}
\usepackage{amsmath}
\usepackage{mathrsfs}
\usepackage{amsthm}
\usepackage[colorlinks,linkcolor=blue,citecolor=blue]{hyperref}

 \newtheorem{theorem}{Theorem}[section]

\newtheorem{lemma}[theorem]{Lemma}
\newtheorem{proposition}[theorem]{Proposition}



\theoremstyle{definition}

\numberwithin{equation}{section}

\begin{document}

\title[An exterior overdetermined problem in convex cones]{An exterior overdetermined problem for Finsler $N$-laplacian in convex cones}

\author[G. Ciraolo]{Giulio Ciraolo}
\address[G. Ciraolo]{Department of Mathematics ``Federigo Enriques"\\
Universit\`a degli Studi di Milano\\ Via Cesare Saldini 50, 20133 Milan\\
Italy}
\email{giulio.ciraolo@unimi.it}

\author[X. Li]{Xiaoliang Li}
\address[X. Li]{School of Mathematical Sciences\\
Beijing Normal University\\
	100875 Beijing\\
	P.R. China}
\email{rubiklixiaoliang@163.com}

\thanks{}
\subjclass[2010]{35N25, 35A23, 35B06, 31B15}

\keywords{Overdetermined problems; mixed boundary conditions; Wulff shapes; convex cones; anisotropic isoperimetric inequality}

\begin{abstract}
We consider a partially overdetermined problem for anisotropic $N$-Laplace equations in a convex cone $\Sigma$ intersected with the exterior of a bounded domain $\Omega$ in $\mathbb{R}^N$, $N\geq 2$. Under a prescribed logarithmic condition at infinity, we prove a rigidity result by showing that the existence of a solution implies that $\Sigma\cap\Omega$ must be the intersection of the Wulff shape and $\Sigma$. Our approach is based on a Pohozaev-type identity and the characterization of minimizers of the anisotropic isoperimetric inequality inside convex cones.
\end{abstract}

\maketitle

\section{Introduction}

In this paper we consider a variational problem in an anisotropic medium which is related to the so-called conformal $N$-capacity (or logarithmic capacity). Our main goal is to provide symmetry results for a partially overdetermined problem in convex cones. 

The physical motivation for studying anisotropic problems comes from well-established models of surface energy (see for instance \cite{Taylor1978}). Moreover, there are many mathematical interesting aspects arising when one considers symmetry problems in an anisotropic setting (\cite{BCS2016,Bianchini-Ciraolo-2018,CRS2016,Cianchi-Salani-2009,CFR2020,Cozzi2014,Cozzi2016,DPV2021}).

The logarithmic capacitance has applications in physics, such as in the classical study of nonlinear voltage and  capacitance differences between, respectively, diodes and capacitors. It appears in condensed-matter and high-energy physics \cite{Goldenfeld,Ramond}, and it is naturally studied in quasiconformal geometry (see for instance \cite{Gehring}). The study of symmetry problems in convex cones has recently attracted the interest of many authors, see for instance \cite{CRS2016,CFR2020,Ciraolo2020,DPV2021,Figalli-Indrei-2013,Lions-Pacella-1990,Pacella-Tralli-2020,RR2004}. As far as the authors know, overdetermined capacity problems in convex cones have not been considered so far, even in the Euclidean case.

\subsection{The mathematical framework} Let $H\in C^2(\mathbb{R}^N\setminus\{0\})$ be a positively homogeneous function of degree one which is positive on $\mathbb{S}^{N-1}$, and let $H_0$ be the dual function of $H$ defined by
\begin{equation}\label{intro-eq:def-H0}
H_0(x):=\sup_{H(\xi)=1}\left<x,\xi\right>,\quad\text{for }x\in\mathbb{R}^N.
\end{equation} 

Given an open, convex cone $\Sigma$ and a bounded domain $\Omega$ in $\mathbb{R}^N$, we consider the following exterior boundary value problem for the anisotropic $N$-Laplace equation
\begin{equation}\label{eq:log-cone}
\left\{
\begin{array}{ll}
\Delta_N^H u=0 & \text{in }\Sigma\cap\overline{\Omega}^c,\\
u=0 & \text{on }\Gamma_0,\\
\left<a(\nabla u),\nu\right>=0 & \text{on }\Gamma_1,
\end{array}
\right.
\end{equation}
with prescribed asymptotic behavior at infinity
\begin{equation}\label{eq:asym-log-cone}
\frac1d\leq\frac{u(x)}{\ln H_0(x)}\leq d\ \text{ for some }d>1,\quad\text{as }H_0(x)\to\infty.
\end{equation}
Here $\Delta_N^H$ is the so-called Finsler $N$-Laplacian which is given by 
$$
\Delta_N^H u=\mathrm{div}\left(a(\nabla u)\right)
$$ 
in the sense of distributions, where
$$
a(\xi) = \frac{1}{N}\nabla H^N(\xi) \quad \forall \xi \in \mathbb{R}^N \,,
$$
\begin{equation*}
\overline{\Omega}^c:=\mathbb{R}^N\setminus\overline{\Omega},\quad\Gamma_0:=\Sigma\cap\partial\Omega,\quad\Gamma_1:=\overline{\Omega}^c\cap\partial\Sigma,\\ 
\end{equation*}
and $\nu$ is the outer normal to $\partial\Sigma$. We will assume throughout this paper that $\Sigma\cap\overline{\Omega}^c$ is connected and $\mathcal{H}^{N-1}(\Gamma_0)>0$ where $\mathcal{H}^{N-1}$ stands for the $(N-1)$-dimensional Hausdorff measure. By a weak solution of \eqref{eq:log-cone} we mean a function $u\in W_{loc}^{1,N}(\overline{\Sigma}\cap\Omega^c)$ with $u=0$ on $\Gamma_0$,\footnote{Given a bounded open set $O\subset\mathbb{R}^N$, a function $f\in W^{1,N}(O)$ and a relatively open subset $\Gamma$ of $\partial O$, we say that $f=0$ on $\Gamma$ if $f$ is the limit in $W^{1,N}(O)$ of a sequence of functions in $C^\infty(\overline{O})$ vanishing in a neighborhood of $\overline{\Gamma}$. In particular, when $\partial O$ is Lipschitz, this definition is equivalent to $T_{\Gamma}f=0$, where $T_\Gamma f$ is the trace of $f$ on $\Gamma$ (see for instance \cite[Proposition 7.86]{Salsa2016}).} such that
$$\int_{\Sigma\cap\overline{\Omega}^c}\left<a(\nabla u),\nabla\varphi\right>dx=0$$
for all $\varphi\in W^{1,N}(\Sigma\cap\overline{\Omega}^c)$ with $\varphi=0$ on $\Gamma_0$ and with bounded support.

Notice that, when $\Sigma=\mathbb{R}^N$, we have $\Gamma_1=\emptyset$ and the third condition in \eqref{eq:log-cone} is trivially satisfied. In this case, if $H$ is the Euclidean norm (i.e. $H(\xi)=|\xi|$), the model \eqref{eq:log-cone}--\eqref{eq:asym-log-cone} applies to the study of logarithmic capacity \cite{Andrea-Paola-2005,Xiao2020}, and it determines the $N$-equilibrium potential of $\Omega$, which naturally appears in computing the capacitance difference between coaxial cylindrical capacitors (see \cite{Polya-Szego-1951}). Analogously, for a general $H$, problem \eqref{eq:log-cone}--\eqref{eq:asym-log-cone} can be applied to the study of the related capacity problems, when the set $\Omega$ is embedded in a possibly anisotropic medium. In this connection, problem \eqref{eq:log-cone}--\eqref{eq:asym-log-cone} can also be seen as a logarithmic counterpart of those arising from the study  of anisotropic $p$-capacity with $1<p<N$ instead, see for instance \cite{BCS2016,Bianchini-Ciraolo-2018,XY2021}. More concrete applications of related anisotropic models arise in the theory of crystals as well as in noise-removal procedures in digital image processing, see for instance \cite{EO2004,Taylor1978} and the references therein.

\subsection{The overdetermined problem} The aim of this paper is to characterize the shape of $\Omega$ in terms of the existence of solutions to problem \eqref{eq:log-cone}--\eqref{eq:asym-log-cone}, coupled with the overdetermined condition
\begin{equation}\label{eq:over-Neumann}
H(\nabla u)=C\quad\text{on }\Gamma_0,
\end{equation}
for some given constant $C>0$. Differently from the classical overdetermined problems originated in Serrin's famous work \cite{Serrin1971}, we emphasize that whenever $\Sigma\subsetneq\mathbb{R}^N$, problem \eqref{eq:log-cone}--\eqref{eq:asym-log-cone} with \eqref{eq:over-Neumann} is partially overdetermined, since in this case both Dirichlet and Neumann conditions are simultaneously imposed only on a part of the boundary of $\Omega$, namely $\Gamma_0$. Accordingly, one may expect to determine the shape of $\Gamma_0$ while a sole homogeneous Neumann boundary condition is assigned on $\Gamma_1$, provided that $\Gamma_1$ satisfies some geometric constrain.

It is well-known that in the isotropic setting, the characterized geometric property of the domain in Serrin's overdetermined problems is the spherical symmetry. In an anisotropic framework encoded by the anisotropy $H$, the natural counterpart of this feature is the so-called Wulff shape which is a ball in the dual function $H_0$, that is
\begin{equation}\label{intro-eq:def-BH0}
B_R^{H_0}(x_0):=\{x\in\mathbb{R}^N:H_0(x-x_0)<R\},
\end{equation}
where $x_0\in\mathbb{R}^N$ and $R>0$ denote the center and the radius, respectively. Starting from this connection, our research is motivated by the observation that there is indeed an explicit logarithmic function in $H_0$ fulfilling problem \eqref{eq:log-cone}--\eqref{eq:asym-log-cone} with \eqref{eq:over-Neumann} in the classical sense, when $\Omega=B_R^{H_0}(x_0)$ for suitable choice of $x_0$ depending on the convex cone $\Sigma$. To make this precise, we need some more notation.

In general, up to a change of coordinates, we can write $\Sigma=\mathbb{R}^k\times\tilde{\Sigma}$, where $k\in\{0,\cdots,N\}$ and $\tilde{\Sigma}\subset\mathbb{R}^{N-k}$ is an open, convex cone with vertex at the origin which does not contain a line. Besides, following \cite{Cozzi2014,Cozzi2016}, we say that the function $H$ is uniformly elliptic if its $1$-sublevel set 
\begin{equation}\label{intro-eq:def-B1H}
B_1^{H}:=\{\xi\in\mathbb{R}^N:H(\xi)<1\}\end{equation}
is uniformly convex. This is a standard assumption ensuring the Hessian of $H^N$ is positive definite in $\mathbb{R}^N\setminus\{0\}$, and thus equation \eqref{eq:log-cone} is of elliptic type, though possibly degenerate. Moreover, under this assumption, one has that $H$ is convex and $H_0\in C^2(\mathbb{R}^N\setminus\{0\})$. These claims will be explained in detail in Subsection \ref{subsec:pro-H}.

With above notations, observe that
\begin{proposition}\label{pro:observe}
Let $N\geq 2$ and $\Sigma=\mathbb{R}^k\times\tilde{\Sigma}$ where $k\in\{0,\cdots,N\}$ and $\tilde{\Sigma}\subset\mathbb{R}^{N-k}$ is an open, convex cone with vertex at the origin which contains no lines. 

Let $R>0$, $x_0\in\mathbb{R}^k\times\{0\}$ and $\Omega=B_R^{H_0}(x_0)$. Assume that $H\in C^2(\mathbb{R}^N\setminus\{0\})$ is uniformly elliptic. Then the function
\begin{equation}\label{intro-eq:formula-u}
u(x)=CR\ln\frac{H_0(x-x_0)}{R}
\end{equation} 
is of class $C^2(\mathbb{R}^N\setminus\{x_0\})$ and is the unique solution to problem \eqref{eq:log-cone}--\eqref{eq:asym-log-cone} with \eqref{eq:over-Neumann}.
\end{proposition}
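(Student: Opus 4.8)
\emph{Existence.} The plan for the existence part is a direct verification that the function $u$ in \eqref{intro-eq:formula-u} has all the required properties. Writing $y=x-x_0$ and $\rho=H_0(y)$, I would use the standard facts about $H$ and $H_0$ recorded in Subsection~\ref{subsec:pro-H} — for uniformly elliptic $H$ one has $H_0\in C^2(\mathbb{R}^N\setminus\{0\})$, $a(\xi)=H^{N-1}(\xi)\nabla H(\xi)$, and for $y\neq0$ the identities $H(\nabla H_0(y))=1$, $\langle\nabla H_0(y),y\rangle=H_0(y)$ (Euler) and $\nabla H(\nabla H_0(y))=y/H_0(y)$ — to compute: $u\in C^2(\mathbb{R}^N\setminus\{x_0\})$ with $\nabla u=CR\,\nabla H_0(y)/\rho$; hence by $1$-homogeneity $H(\nabla u)=CR/\rho$, so $u=0$ and $H(\nabla u)=C$ on $\Gamma_0$, i.e.\ the Dirichlet condition in \eqref{eq:log-cone} and the overdetermined condition \eqref{eq:over-Neumann} both hold; next, using that $\nabla H$ is $0$-homogeneous, $a(\nabla u)=(CR/\rho)^{N-1}\,y/\rho=(CR)^{N-1}\,y/H_0(y)^{N}$, and a short computation with Euler's identity gives $\mathrm{div}_y\big(y/H_0(y)^N\big)=N/\rho^N-N\langle y,\nabla H_0(y)\rangle/\rho^{N+1}=0$, so $\Delta_N^H u=0$ away from $x_0$, in particular in $\Sigma\cap\overline{\Omega}^c$. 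For the Neumann condition I would use that, at a smooth point $x\in\partial\Sigma$, $\langle x,\nu(x)\rangle=0$ since $\Sigma$ is a cone with vertex $0$, and $\langle x_0,\nu(x)\rangle=0$ since $x_0\in\mathbb{R}^k\times\{0\}$ while $\nu(x)\in\{0\}\times\mathbb{R}^{N-k}$ (because $\Sigma=\mathbb{R}^k\times\tilde\Sigma$); hence $\langle a(\nabla u),\nu\rangle=(CR)^{N-1}\langle y,\nu\rangle/\rho^N=0$ on $\Gamma_1$. The asymptotic condition \eqref{eq:asym-log-cone} follows from subadditivity of $H_0$, which forces $\ln H_0(x-x_0)/\ln H_0(x)\to1$, hence $u(x)/\ln H_0(x)\to CR>0$ as $H_0(x)\to\infty$. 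Finally, since $x_0\in\Omega$, $u$ is $C^2$ near $\overline{\Sigma}\cap\Omega^c$, so an integration by parts against admissible test functions (vanishing on $\Gamma_0$, with bounded support) shows $u$ satisfies the weak formulation.

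\emph{Uniqueness.} I would first observe that, by the $(N-1)$-homogeneity of $a$, every dilate $u_t:=t\,u$ ($t>0$) is again a $C^2$ solution of \eqref{eq:log-cone} satisfying \eqref{eq:asym-log-cone} (with a $t$-dependent $d$), and $H(\nabla u_t)=tC$ on $\Gamma_0$; thus \eqref{eq:over-Neumann} is exactly what singles out $t=1$, and its presence in the uniqueness statement is essential. Given a solution $v$, set $\ell:=\liminf_{H_0(x)\to\infty}v(x)/\ln H_0(x)$ and $L:=\limsup_{H_0(x)\to\infty}v(x)/\ln H_0(x)$, so that $0<1/d\le\ell\le L\le d$ by \eqref{eq:asym-log-cone}. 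On the truncated cones $A_\rho:=\Sigma\cap\{R<H_0(\cdot-x_0)<\rho\}$ the pair $v,u_t$ has matching (zero) Dirichlet data on $\Gamma_0$, matching (zero) conormal data $\langle a(\nabla\cdot),\nu\rangle$ on $\partial\Sigma\cap A_\rho$, and, for $\rho$ large, ordered values on the outer cap $\{H_0(\cdot-x_0)=\rho\}$ whenever $tCR>L$ (resp.\ $tCR<\ell$), since there $u_t\equiv tCR\ln(\rho/R)$ while $v(x)\le(L+\varepsilon)\ln H_0(x)$ (resp.\ $v(x)\ge(\ell-\varepsilon)\ln H_0(x)$) near infinity. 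The weak comparison principle for $\Delta_N^H$ — available because $a$ is strictly monotone for uniformly elliptic $H$, and applicable with these mixed data by testing with $(v-u_t)^{\pm}$ — then yields $v\le u_t$ (resp.\ $v\ge u_t$) on $A_\rho$; letting $\rho\to\infty$ and $tCR\downarrow L$ (resp.\ $\uparrow\ell$) gives $\tfrac{\ell}{CR}u\le v\le\tfrac{L}{CR}u$ on $\Sigma\cap\overline{\Omega}^c$.

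\emph{The remaining step, and the main obstacle.} What is left is to show $\ell=L$, after which $v\equiv\tfrac{\ell}{CR}u$, whence $H(\nabla v)=\tfrac{\ell}{R}$ on $\Gamma_0$ and \eqref{eq:over-Neumann} forces $\ell=CR$, i.e.\ $v=u$. This is the hard part: \eqref{eq:asym-log-cone} only provides a two-sided logarithmic bound, not an asymptotic, so one must separately establish that $v(x)/\ln H_0(x)$ actually \emph{converges} as $H_0(x)\to\infty$. I would obtain this from the precise behaviour of anisotropic $N$-harmonic functions at infinity — e.g.\ by a Kelvin-type inversion turning the exterior problem into one with an isolated singularity (reflecting across $\partial\Sigma$ beforehand to dispose of the homogeneous Neumann condition, using $\Sigma=\mathbb{R}^k\times\tilde\Sigma$), followed by the known classification of isolated singularities of $N$-harmonic functions (logarithmic growth implies asymptotic to a multiple of the fundamental solution); alternatively, directly via barriers built from the $u_t$ together with Harnack inequalities on dyadic shells. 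Apart from this asymptotic analysis (and checking that the weak comparison principle genuinely applies to the degenerate operator with mixed boundary conditions), every step is a routine computation.
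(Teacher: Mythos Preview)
Your existence argument is correct and essentially identical to the paper's: both verify \eqref{intro-eq:formula-u} by direct computation using the homogeneity relations and the duality identities $H(\nabla H_0)=1$, $\nabla H(\nabla H_0(y))=y/H_0(y)$ from Lemmas~\ref{pre-lem:pro-H} and~\ref{pre-lem:pro-H-con}.

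For uniqueness the paper takes a different and shorter route: it simply invokes the well-posedness result Theorem~\ref{cap-thm:F-log}, which in turn rests on Proposition~\ref{cap-pro:asym-equi}. The latter already shows that \emph{every} solution of \eqref{eq:log-cone}--\eqref{eq:asym-log-cone} has a genuine asymptotic $u\sim\gamma\ln H_0+\beta$; once that is known, $v/\gamma$ is the unique solution with $\gamma=1$ constructed in Theorem~\ref{cap-thm:F-log}, hence $v=\gamma\ln(H_0(x-x_0)/R)$, and then \eqref{eq:over-Neumann} fixes $\gamma=CR$. Your sandwich $(\ell/CR)u\le v\le(L/CR)u$ via comparison with the dilates $u_t$ is correct and is, in fact, the same barrier idea used inside the proof of Proposition~\ref{cap-pro:asym-equi}; but it only reproduces the bounds $\ell\le v/\ln H_0\le L$ and does not by itself yield $\ell=L$.

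The genuine gap is in the methods you propose for that remaining step. A Kelvin-type inversion is not available here: there is no conformal inversion preserving the Finsler $N$-Laplacian for a general anisotropy $H$ (the paper explicitly notes this when motivating Lemma~\ref{cap-lem:Liouville}). Likewise, ``reflecting across $\partial\Sigma$'' does not dispose of the Neumann condition in this setting, because $H$ is not assumed even and the cone $\tilde\Sigma$ need not be a half-space, so reflection does not map solutions of $\Delta_N^H u=0$ to solutions. What actually works --- and what the paper does in Proposition~\ref{cap-pro:asym-equi} --- is a blow-down: one studies $u_R(x)=u(Rx)/\ln R$, uses interior/boundary $C^{1,\alpha}$ estimates to extract a limit $v\in L^\infty(\Sigma)$ solving the same mixed problem in all of $\Sigma$, and then applies a Liouville-type result (Lemma~\ref{cap-lem:Liouville}, proved by a cut-off/energy argument rather than Kelvin) to conclude $v$ is constant, whence $\ell=L$. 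Your second alternative (``barriers from $u_t$ plus Harnack on dyadic shells'') points in the right direction but would still need a substitute for the missing Liouville ingredient; as stated it is not a complete plan.
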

Proposition \ref{pro:observe} can be verified directly by exploiting the homogeneity and differentiability of $H$ and $H_0$, see Lemmas \ref{pre-lem:pro-H} and \ref{pre-lem:pro-H-con} below. In particular, the uniqueness of \eqref{intro-eq:formula-u} follows from the well-posedness of original problem \eqref{eq:log-cone}--\eqref{eq:asym-log-cone}, as we will show in Theorem \ref{cap-thm:F-log}. 

\subsection{The main result} Concerning the reverse of Proposition \ref{pro:observe}, an investigation into seeking a Wulff shape characterization for the domain $\Omega$ naturally arises. Indeed, we shall prove that if problem 
\eqref{eq:log-cone}--\eqref{eq:asym-log-cone} with \eqref{eq:over-Neumann} admits a weak solution, then $\Sigma\cap\Omega$ must be $\Sigma\cap B_R^{H_0}(x_0)$, under some regularity assumption on $\Omega$ and on the solution. More precisely, our main result is the following.


\begin{theorem}\label{thm:over-log-cone}
Let $\Omega$ be a bounded domain of $\mathbb{R}^N$ with boundary of class $C^{1,\alpha}$, $\alpha\in(0,1)$ and $N\geq 2$. Let $\Sigma=\mathbb{R}^k\times\tilde{\Sigma}$ where $k\in\{0,\cdots,N\}$ and $\tilde{\Sigma}\subset\mathbb{R}^{N-k}$ is an open, convex cone with vertex at the origin and containing no lines. 

Let $H\in C^2(\mathbb{R}^N\setminus\{0\})$ be a positively homogeneous function of degree one which is positive on $\mathbb{S}^{N-1}$ and uniformly elliptic, and let $H_0$ be its dual function defined by \eqref{intro-eq:def-H0}. 

Assume that there exists a solution $u\in W_{loc}^{1,N}(\overline{\Sigma}\cap\Omega^c)$ to problem \eqref{eq:log-cone}--\eqref{eq:asym-log-cone} with \eqref{eq:over-Neumann} such that 
$$
\nabla u\in L_{loc}^{\infty}(\overline{\Sigma}\cap\Omega^c) \,.
$$ 

Then $\Sigma\cap\Omega=\Sigma\cap B_R^{H_0}(x_0)$ and $u$ is given by \eqref{intro-eq:formula-u}, for some $R>0$ and $x_0\in\mathbb{R}^k\times\{0\}$.
\end{theorem}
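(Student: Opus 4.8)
The plan is to show, using the overdetermined condition and the behaviour at infinity, that $\Sigma\cap\Omega$ achieves equality in the anisotropic isoperimetric inequality relative to the convex cone $\Sigma$, and then to conclude by the rigidity of that inequality together with the uniqueness in Theorem~\ref{cap-thm:F-log}.

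I would begin with the preliminary reductions. Since $\ln H_0(x)>0$ for $H_0(x)$ large, \eqref{eq:asym-log-cone} forces $u>0$ near infinity and $u\to+\infty$; by comparison and the Hopf lemma (the convexity of $\Sigma$ being used to handle the conormal condition on $\Gamma_1$) one gets $u>0$ in $\overline{\Sigma}\cap\overline{\Omega}^c\setminus\Gamma_0$, while $H(\nabla u)=C>0$ gives $\nabla u\neq0$ on $\Gamma_0$. By the regularity theory for the Finsler $N$-Laplacian, $u\in C^{1,\beta}_{loc}(\overline{\Sigma}\cap\Omega^c)$, $u$ is of class $C^2$ wherever $\nabla u\neq0$ (in particular near $\Gamma_0$ and near infinity), and the critical set is negligible for the integrations below. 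The crucial preliminary step—and the one I expect to be the main obstacle—is to upgrade the two-sided bound \eqref{eq:asym-log-cone} to the sharp asymptotics
\[
u(x)=C_1\ln H_0(x)+O(1),\qquad \nabla u(x)=C_1\,\frac{\nabla H_0(x)}{H_0(x)}+o\!\left(\tfrac{1}{H_0(x)}\right)\qquad\text{as }H_0(x)\to\infty,
\]
for some constant $C_1>0$; this rests on the well-posedness and blow-down analysis behind Theorem~\ref{cap-thm:F-log} together with interior gradient estimates. Throughout I use the duality identities $H(\nabla H_0)\equiv1$ and $\nabla H(\nabla H_0(x))=x/H_0(x)$ (Lemmas~\ref{pre-lem:pro-H}--\ref{pre-lem:pro-H-con}).

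The core is a Pohozaev-type identity. Since $F(\xi):=\tfrac1N H^N(\xi)$ satisfies the scaling relation $\langle\xi,a(\xi)\rangle=NF(\xi)$, for any weak solution of $\Delta_N^H u=0$ the vector field
\[
V:=\tfrac1N H^N(\nabla u)\,x-\langle x,\nabla u\rangle\,a(\nabla u)
\]
is divergence free in the distributional sense (this is the Pohozaev-type identity, valid for weak solutions by approximation). Integrating $\operatorname{div}V=0$ over $D_\rho:=\Sigma\cap(B_\rho^{H_0}\setminus\overline{\Omega})$ and letting $\rho\to\infty$: the flux through the lateral piece $\partial\Sigma\cap\partial D_\rho$ vanishes, because $\langle x,\nu\rangle=0$ on the cone $\partial\Sigma$ and $\langle a(\nabla u),\nu\rangle=0$ by the conormal condition on $\Gamma_1$; the flux through $\Gamma_0$, using $u=0$ there (so $\nabla u\parallel\nu_\Omega$) and $H(\nabla u)=C$, reduces after a short computation to $(N-1)C^N|\Sigma\cap\Omega|$; and the flux through $\Sigma\cap\partial B_\rho^{H_0}$ tends, by the sharp asymptotics, to $-(N-1)C_1^N|\Sigma\cap B_1^{H_0}|$. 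This gives the first relation
\[
C^N\,|\Sigma\cap\Omega|=C_1^N\,|\Sigma\cap B_1^{H_0}|.
\]
Next, applying the divergence theorem to the weakly divergence free field $a(\nabla u)$ on $D_\rho$: the flux of $a(\nabla u)$ through $\Gamma_0$ with respect to $\nu_\Omega$ equals the flux through $\Sigma\cap\partial B_\rho^{H_0}$; the former is $C^{N-1}\int_{\Gamma_0}H(\nu_\Omega)\,d\mathcal H^{N-1}=C^{N-1}P_H(\Sigma\cap\Omega;\Sigma)$ by the overdetermined condition (since $\langle a(\nabla u),\nu_\Omega\rangle=C^{N-1}H(\nu_\Omega)$ on $\Gamma_0$), while the latter tends to $N\,C_1^{N-1}|\Sigma\cap B_1^{H_0}|$ by the asymptotics. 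Hence
\[
C^{N-1}P_H(\Sigma\cap\Omega;\Sigma)=N\,C_1^{N-1}\,|\Sigma\cap B_1^{H_0}|.
\]
(Throughout I use that $|\Sigma\cap B_\rho^{H_0}(x_0)|=\rho^N|\Sigma\cap B_1^{H_0}|$ for $x_0\in\mathbb R^k\times\{0\}$, since translations along $\mathbb R^k$ preserve $\Sigma$.)

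Eliminating $C_1$ between the two relations—licit since $\mathcal H^{N-1}(\Gamma_0)>0$ keeps all quantities positive—yields exactly
\[
P_H(\Sigma\cap\Omega;\Sigma)=N\,|\Sigma\cap\Omega|^{\frac{N-1}{N}}\,|\Sigma\cap B_1^{H_0}|^{\frac1N},
\]
i.e.\ equality in the anisotropic isoperimetric inequality relative to $\Sigma$. By the characterization of its extremal sets (the Lions--Pacella-type rigidity in the anisotropic setting, the Wulff shape being centred on the lineality space $\mathbb R^k\times\{0\}$), $\Sigma\cap\Omega=\Sigma\cap B_R^{H_0}(x_0)$ for some $R>0$ and $x_0\in\mathbb R^k\times\{0\}$; since $\partial\Omega$ is $C^{1,\alpha}$ this is an identity of open sets, so problem \eqref{eq:log-cone}--\eqref{eq:asym-log-cone} with \eqref{eq:over-Neumann} has exactly the data of Proposition~\ref{pro:observe}, and the uniqueness part of Theorem~\ref{cap-thm:F-log} forces $u$ to be given by \eqref{intro-eq:formula-u}. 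As indicated, the decisive difficulty is the asymptotic step; a secondary technical point is the justification of the integrations by parts near the vertex of $\Sigma$ and along the edge $\partial\Sigma\cap\partial\Omega$, which is carried out by excising small neighbourhoods and using the local boundedness of $\nabla u$ and the $C^{1,\alpha}$ regularity of $\partial\Omega$.
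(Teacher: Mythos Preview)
Your strategy is correct and coincides with the paper's: the two identities coming from the Pohozaev vector field $V$ (which is the negative of the paper's $V_N$, divergence free precisely because $p=N$) and from the flux of $a(\nabla u)$ combine to force equality in the anisotropic isoperimetric inequality in $\Sigma$, after which Theorem~\ref{pre-thm:Wulff-cone} and the uniqueness in Theorem~\ref{cap-thm:F-log} conclude. You also correctly single out the asymptotic refinement and the justification of the boundary integrals as the two technical difficulties; these are exactly Proposition~\ref{cap-pro:asym-equi} and the approximation of $\Sigma\cap(B_R^{H_0}\setminus\overline{\Omega})$ by Lipschitz domains in the paper.

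Two small overclaims to flag. First, you invoke the Hopf lemma to get $u>0$, but the paper explicitly notes (in the introduction) that Hopf's boundary point lemma is not available here; positivity instead follows from the comparison principle (Lemma~\ref{pre-lem:WCP}) against the barrier $\ln H_0$. Second, you assert $u\in C^{1,\beta}_{loc}(\overline{\Sigma}\cap\Omega^c)$, i.e.\ $C^{1,\beta}$ up to the full boundary, which is precisely what one \emph{cannot} assume (this is why the hypothesis $\nabla u\in L^\infty_{loc}$ is imposed; the paper stresses that regularity up to $\partial\Sigma\cap\partial\Omega$ depends on how $\Omega$ meets $\Sigma$). Fortunately you do not use this regularity in the actual computation---you correctly fall back on the $L^\infty$ bound and an excision/approximation argument---so the proof stands once these two sentences are rephrased.
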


In order to compare Theorem \ref{thm:over-log-cone} to related results available in literature, we make some remark concerning the regularity of the solution and on the type of anisotropy we are considering. 

We first remark that, under the above assumptions on $\Omega$ and $H$, $u$ belongs to $C^1((\Sigma\cap\overline{\Omega}^c)\cup\Gamma_0)$. Actually, by interior regularity results in \cite{DiBen1983,Serrin1964,Tolksdorf1984}  a weak solution to equation \eqref{eq:log-cone} which is in $W_{loc}^{1,N}(\Sigma\cap\overline{\Omega}^c)$ is automatically of class $C^1$. Thanks to \cite{Lieberman1988},\footnote{In order to apply the boundary regularity results in \cite{Lieberman1988} (see Theorems 1 and 2 there), it is not difficult to observe that $u\in L_{loc}^\infty((\Sigma\cap\overline{\Omega}^c)\cup\Gamma_0\cup\Gamma_1)$ by employing the Moser iteration argument as in \cite{Serrin1964}.} such regularity can be pushed up to $\Gamma_0$ and to the $C^{1,\alpha}$-regular portion of $\Gamma_1$. Moreover, as we will see in Proposition \ref{pre-pro:second-order} below, 
$$a(\nabla u)\in W_{loc}^{1,2}((\Sigma\cap\overline{\Omega}^c)\cup\Gamma_1) \,.$$
However, the regularity up to the whole boundary is a delicate issue, since it strongly relies on how $\Omega$ and $\Sigma$ intersect. Nevertheless, our result holds true without requiring any global regularity other than 
$$
\nabla u\in L_{loc}^{\infty}(\overline{\Sigma}\cap\Omega^c) \,.
$$ 
This can be viewed as a glueing condition between the cone and $\Gamma_0$ as explained in \cite{Pacella-Tralli-2020}. We also refer this to \cite{CM2011}, where global Lipschitz regularity is proved for Dirichlet or Neumann boundary value problems of $p$-Laplace type in convex domains. We notice that in the case $\Sigma=\mathbb{R}^N$ we do not need to impose additional regularity assumptions on the solution $u$. Moreover, regarding the anisotropy $H$, we note that here we do not assume $H$ to be even, so, in general, $H(\xi)\neq H(-\xi)$; namely, $H$ is not necessarily a norm. The same is true for the dual function $H_0$ as well.

When $k=N$ (i.e. $\Sigma=\mathbb{R}^N$) and $H$ is the Euclidean norm, Theorem \ref{thm:over-log-cone} was proved in Reichel \cite{Reichel1996} and Poggesi \cite{Poggesi2019}, respectively by the well-known moving planes method and by using some integral identities combined with the classical isoperimetric inequality. In an anisotropic setting, the moving plane technique is no more helpful, and the Wulff shape characterization result in the case $k=N$ of Theorem \ref{thm:over-log-cone} was recently treated in Xia--Yin \cite{Xia-Yin-2021} by adapting the arguments used in \cite{Poggesi2019}, under the assumptions that $H\in C^\infty(\mathbb{R}^N\setminus\{0\})$ is a norm and that $\partial\Omega$ is of class $C^{2,\alpha}$. In the present paper, by exploiting similar integral methods, we generalize such characterization results to the setting of convex cones, and also classify the resulting symmetry for the solutions.

Here it is worth pointing out that the generalization is not trivial. Indeed, in the case $\Sigma\subsetneq\mathbb{R}^N$ we are concerned with the mixed boundary value problem \eqref{eq:log-cone}. In order to deal with a problem of this type, we need to establish qualitative properties such as comparison principles and Liouville-type results in a cone setting with homogeneous Neumann boundary conditions. These are essential tools in analyzing the precise asymptotic behavior of the solutions at infinity, since neither Kelvin type transform nor Hopf's boundary point lemma are available in our case. Moreover, due to the lack of global $C^1$-regularity of the solution as well as the non-smoothness of $\Sigma$, we have to employ careful approximation arguments to validate certain integral identities and inequalities, including a Pohozaev-type identity contained in Theorem \ref{P-thm:Pohozaev} and local $W^{1,2}$ estimates up to $\partial\Sigma$ for the nonlinear vector fields of the gradient of the solutions (see Proposition \ref{pre-pro:second-order}). In particular, this   brings a subtle issue that we need to approximate the possibly non-Lipschitz sets in $\Sigma\cap\overline{\Omega}^c$ by Lipschitz sets converging in some boundary integrals.

The result of Theorem \ref{thm:over-log-cone} is strictly related to the anisotropic isoperimetric inequality inside convex cones, which was obtained in Cabr\'e--Ros-Oton--Serra \cite{CRS2016} along with a general weighted version. It states that, in our notation, if $E\subset\mathbb{R}^N$ is a measurable set with finite Lebesgue measure $\mathcal{H}^N$ in $\Sigma$, then
\begin{equation}\label{intro-eq:iso-ineq}
\frac{P_{H}(E;\Sigma)}{\mathcal{H}^N(\Sigma\cap E)^{\frac{N-1}{N}}}\geq \frac{P_H(B_1^{H_0};\Sigma)}{\mathcal{H}^N(\Sigma\cap B_1^{H_0})^{\frac{N-1}{N}}}
\end{equation}
and the equality holds whenever $\Sigma\cap E=\Sigma\cap B_R^{H_0}(x_0)$; here $B_1^{H_0}:=B_1^{H_0}(0)$ is the unit Wulff ball centered at the origin and $P_{H}(E;\Sigma)$ denotes the anisotropic perimeter of $E$ relative to $\Sigma$ defined in \eqref{pre-eq:def-a-perimeter} below. Inequality \eqref{intro-eq:iso-ineq} was proved in \cite{CRS2016} by reducing it to a degenerate case of the classical Wulff inequality which is well-known in the literature (see for instance \cite{Dacorogna1992,Fonseca1991,Taylor1978,FMP2010}). Such an idea is first observed by Figalli and Indrei \cite{Figalli-Indrei-2013} in order to establish a quantitative version of the isoperimetric inequality in convex cones due to Lions--Pacella \cite{Lions-Pacella-1990}, corresponding to \eqref{intro-eq:iso-ineq} where $H$ is the Euclidean norm. Recently, the ideas in \cite{Figalli-Indrei-2013} were further adapted in Dipierro--Poggesi--Valdinoci \cite{DPV2021} to prove the uniqueness of the minimizers of \eqref{intro-eq:iso-ineq} for a general norm $H$, illustrating that the equality in \eqref{intro-eq:iso-ineq} holds if and only if $\Sigma\cap E=\Sigma\cap B_R^{H_0}(x_0)$. However, the same argument still works in our case where $H$ is a gauge as required in \cite{CRS2016}. We shall state this precisely in Theorem \ref{pre-thm:Wulff-cone}, from which we are able to conclude the proof of Theorem \ref{thm:over-log-cone} (see below for a detailed description).

Before explaining the main ideas of our proof, we would like to mention more related studies on Serrin's overdetermined problems for anisotropic equations and their variants in cones. 

In the whole $\mathbb{R}^N$, classical Serrin's result in \cite{Serrin1971} has been extended to the setting of Finsler $p$-Laplacian ($p>1$) both in bounded domains and exterior domains, we refer to \cite{BCS2016,Bianchini-Ciraolo-2018,Cianchi-Salani-2009,Wang-Xia-2011,Xia-Yin-2021} and the references therein. The two alternative approaches used in these literatures are both based on integral identities and they are inspired by the idea of Weinberger \cite{Wein1971} and that of Brandolini--Nitsch--Salani--Trombetti \cite{BNST2008}, respectively. The main difference is that the latter relies on a Cauchy--Schwarz inequality about the Hessian matrix and does not invoke a maximum principle for a $P$-function as introduced in \cite{Wein1971}.

Regarding the variants for cones, rigidity results of Serrin type were first obtained in Pacella--Tralli \cite{Pacella-Tralli-2020}, where they considered an interior overdetermined problem inside a smooth convex cone and gave a characterization of spherical sectors following the approaches in \cite{BNST2008,Wein1971}. Then the first author and Roncoroni \cite{Ciraolo2020} generalized that to more general elliptic operators which are possibly degenerate as well as to space forms. More generally, during the last decade, much interest has been devoted to other parallel problems in convex cones and anisotropic setting (see for instance \cite{CRS2016,CFR2020,Cozzi2014,DPV2021,RR2004}). However, as far as we know, exterior overdetermined problems in unbounded domains contained in cones have not been studied yet even in the isotropic setting. The study presented in this paper may serve as a starting point in this direction.

Now we comment the proof of Theorem \ref{thm:over-log-cone}. Unlike those developed in \cite{Bianchini-Ciraolo-2018,BNST2008,Wein1971}, here we adopt an isoperimetric argument to prove Theorem \ref{thm:over-log-cone} in the spirit of \cite{DPV2021,Esposito2018,Poggesi2019}. The crucial point consists in using integral identities to show that $\Sigma\cap\Omega$ is a minimizer of the anisotropic isoperimetric inequality inside $\Sigma$ (see \eqref{intro-eq:iso-ineq}) whenever problem \eqref{eq:log-cone}--\eqref{eq:asym-log-cone} with \eqref{eq:over-Neumann} admits a solution, which implies the desired Wulff shape characterization. To this aim, the proof is made in three steps. Firstly, via scaling arguments we improve the logarithmic behavior of $u$ prescribed in \eqref{eq:asym-log-cone} and obtain its asymptotic expansion at infinity, see Proposition \ref{cap-pro:asym-equi}. Secondly, we derive a Pohozaev-type identity for equation \eqref{eq:log-cone}. Finally, by an approximation argument we compute the explicit value of the constant $C$ appearing in overdetermined condition \eqref{eq:over-Neumann} and we further apply the Pohozaev identity to deduce that $\Sigma\cap\Omega$ satisfies the equality case of \eqref{intro-eq:iso-ineq}.

The paper is organized as follows. In Section \ref{sec:prelimi}, we collect some  auxiliary and technical results, including properties of the anisotropy $H$, a weak comparison principle and the anisotropic isoperimetric inequality in convex cones. In particular, a second-order regularity result for weak solutions to problem \eqref{eq:log-cone} is established in Subsection \ref{subsec:second-order}. The solvability of problem \eqref{eq:log-cone}--\eqref{eq:asym-log-cone} and the asymptotic expansion at infinity of the solution are tackled in Section \ref{sec:well-posedness}. Section \ref{sec:Pohozaev} is devoted to a Pohozaev-type identity which is derived for more general homogeneous anisoptropic $p$-Laplace equations with any $1<p<\infty$. Finally, we complete the proof of Theorem \ref{thm:over-log-cone} and of Proposition \ref{pro:observe} in Section \ref{sec:proof}.

\section{Preliminaries}\label{sec:prelimi}
\subsection{Basic properties of the function $H$}\label{subsec:pro-H}
Throughout this subsection, we always let $H:\mathbb{R}^N\to\mathbb{R}$ be a positively homogeneous function of degree one which is of class $C^0(\mathbb{R}^N\setminus\{0\})$ and satisfies $H(\xi)>0$ for any $\xi\in\mathbb{S}^{N-1}$, and let $H_0$ be its dual function defined by \eqref{intro-eq:def-H0}. Clearly, by definition, $H_0$ is also positively homogeneous of degree one and is convex. Here, recall that a function $f:\mathbb{R}^N\to\mathbb{R}$ is said to be positively homogeneous of degree one if
\begin{equation*} 
f(tz)=tf(z)\quad\text{for any } t>0,\,z\in\mathbb{R}^N.
\end{equation*}
From this, one infers that $H(0)=0$ and $H\in C^0(\mathbb{R}^N)$. Also, for any $\xi\in\mathbb{R}^N$,
$$|\xi|\min_{\mathbb{S}^{N-1}}H\leq H(\xi)\leq|\xi|\max_{\mathbb{S}^{N-1}}H.$$
It is clear that analogous properties hold with $H$ replaced by $H_0$.
Moreover, the homogeneity implies that
\begin{lemma}\label{pre-lem:pro-H}
If $H,H_0\in C^1(\mathbb{R}^N\setminus\{0\})$, then
$$\nabla H(t\xi)=\nabla H(\xi), \quad \nabla H_0(tx)=\nabla H_0(x)$$
for any $t>0$ and $\xi,x\in\mathbb{R}^N\setminus\{0\}$, and
$$\left<\nabla H(\xi),\xi\right>=H(\xi),\quad \left<\nabla H_0(x),x\right>=H_0(x)$$
for any $\xi,x\in\mathbb{R}^N$.
\end{lemma}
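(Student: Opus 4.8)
The plan is to use the classical Euler-type argument for positively homogeneous functions, differentiating the defining relation $H(t\xi)=tH(\xi)$ in two different ways. Since $H\in C^1(\mathbb{R}^N\setminus\{0\})$ by hypothesis, for a fixed $\xi\neq 0$ and $t>0$ the point $t\xi$ stays away from the origin, so both the $\xi$-gradient and the $t$-derivative of $H(t\xi)$ exist and the chain rule applies; the same remarks hold for $H_0$, which is also positively homogeneous of degree one and, by assumption, of class $C^1$ on $\mathbb{R}^N\setminus\{0\}$.

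First I would prove the scaling invariance of the gradient. Fix $t>0$ and regard both sides of the identity $H(t\xi)=tH(\xi)$ as functions of $\xi\in\mathbb{R}^N\setminus\{0\}$. Taking the gradient in $\xi$ and applying the chain rule to the left-hand side gives $t\,\nabla H(t\xi)=t\,\nabla H(\xi)$, and dividing by $t>0$ yields $\nabla H(t\xi)=\nabla H(\xi)$. The identical computation with $H$ replaced by $H_0$ gives $\nabla H_0(tx)=\nabla H_0(x)$ for every $x\neq 0$ and $t>0$.

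Next I would establish Euler's identity. Fixing $\xi\neq 0$ and differentiating $H(t\xi)=tH(\xi)$ with respect to $t$, the left side becomes $\langle\nabla H(t\xi),\xi\rangle$ by the chain rule while the right side becomes $H(\xi)$; hence $\langle\nabla H(t\xi),\xi\rangle=H(\xi)$ for every $t>0$, and specializing to $t=1$ gives $\langle\nabla H(\xi),\xi\rangle=H(\xi)$. For $\xi=0$ the identity holds trivially, since homogeneity of degree one forces $H(0)=0$ (as already observed in the text) and the pairing against the zero vector vanishes. The argument for $H_0$ is verbatim the same.

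The statement is elementary and I do not anticipate any genuine obstacle; the only point deserving a moment of care is that $\nabla H$ is a priori defined only on $\mathbb{R}^N\setminus\{0\}$, so in Euler's identity the case $\xi=0$ must be read as the true equality $0=0$ rather than as an assertion about $\nabla H(0)$, and one should merely note that the regularity $H\in C^1(\mathbb{R}^N\setminus\{0\})$ does legitimize the two chain-rule computations, since the relevant points $t\xi$ remain in the punctured space whenever $\xi\neq 0$ and $t>0$.
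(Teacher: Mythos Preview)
Your argument is correct and is exactly the standard Euler-type computation the paper has in mind; indeed the paper does not even supply a proof, simply introducing the lemma with ``the homogeneity implies that'' and leaving the verification to the reader. Your write-up fills in precisely those details.
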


From Lemma \ref{pre-lem:pro-H}, one observes that $H^N\in C^1(\mathbb{R}^N)$ if $H\in C^1(\mathbb{R}^N\setminus\{0\})$, referring to \cite[Lemma 2.3]{Cozzi2014} for a rigorous proof. Thus, the function $a(\xi)= \frac{1}{N}\nabla H^N(\xi)$ is actually continuous at the origin and we have
\begin{equation}\label{intro-eq:def-a}
a(\xi)=
\begin{cases}
H^{N-1}(\xi)\nabla H(\xi), &\text{if }\xi\in\mathbb{R}^N\setminus\{0\},\\
0, & \text{if }\xi=0.
\end{cases}
\end{equation}

The next lemma collects several well-known properties containing the convexity of $H$, the differentiability of $H_0$ and useful connections between $H$ and $H_0$.


\begin{lemma}\label{pre-lem:pro-H-con}
If $H\in C^2(\mathbb{R}^N\setminus\{0\})$ and the Hessian of $H^N$ is positive definite in $\mathbb{R}^N\setminus\{0\}$, then $H$ is convex and $H_0\in C^2(\mathbb{R}^N\setminus\{0\})$. Moreover, for $x,\xi\in\mathbb{R}^N\setminus\{0\}$, 
\begin{equation}\label{pre-eq:pro-H-con-1}
H(\nabla H_0(x))=H_0(\nabla H(\xi))=1,
\end{equation} and 
\begin{equation}\label{pre-eq:pro-H-con-2}
x=H_0(x)\nabla H(\nabla H_0(x)),\quad \xi=H(
\xi)\nabla H_0(\nabla H(\xi)).
\end{equation}
\end{lemma}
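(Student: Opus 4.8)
\textbf{Proof proposal for Lemma \ref{pre-lem:pro-H-con}.} The plan is to deduce everything from the standard theory of the Legendre transform (polar duality of convex gauges), adapting it carefully to the fact that $H$ need not be even, so $H$ and $H_0$ are gauges rather than norms. First I would establish convexity of $H$: since $H^N$ has positive definite Hessian on $\mathbb{R}^N\setminus\{0\}$ and vanishes at the origin, $H^N$ is convex on all of $\mathbb{R}^N$ (one checks convexity along segments through the origin separately using homogeneity, as in \cite[Lemma 2.3]{Cozzi2014}); then $H=(H^N)^{1/N}$ is convex as the composition of the increasing concave—no, rather one argues directly that the sublevel set $B_1^H=\{H<1\}=\{H^N<1\}$ is convex, hence the $1$-homogeneous function $H$ whose unit sublevel set is convex is itself convex. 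From convexity and positive $1$-homogeneity of $H$ one gets the usual representation $H_0(x)=\sup_{\xi\neq 0}\langle x,\xi\rangle/H(\xi)$, matching \eqref{intro-eq:def-H0}, and the biduality $H_{00}=H$.

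Next I would address the $C^2$ regularity of $H_0$ away from the origin. The point is that, because $H^N$ is $C^2$ with positive definite Hessian on $\mathbb{R}^N\setminus\{0\}$, the gradient map $\xi\mapsto \nabla H^N(\xi)$ is a $C^1$ diffeomorphism from $\mathbb{R}^N\setminus\{0\}$ onto its image (which, by $1$-homogeneity of $H$, meaning $(N-1)$-homogeneity of $a=\tfrac1N\nabla H^N$, is again a punctured cone), with $C^1$ inverse by the inverse function theorem. The Legendre transform of $\tfrac1N H^N$ is $\tfrac1N H_0^N$ (a computation using homogeneity: if $x=a(\xi)=H^{N-1}(\xi)\nabla H(\xi)$ then $\langle x,\xi\rangle=H^N(\xi)$ and the Legendre transform equals $\langle x,\xi\rangle-\tfrac1N H^N(\xi)=\tfrac{N-1}{N}H^N(\xi)$, which one identifies with $\tfrac{N-1}{N}$ times a $1$-homogeneous object to the $N$-th power, namely $H_0^N$). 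Since the Legendre transform of a $C^2$ function with positive definite Hessian is again $C^2$ with positive definite Hessian on the (open) image cone, $H_0^N\in C^2(\mathbb{R}^N\setminus\{0\})$, and then $H_0=(H_0^N)^{1/N}\in C^2(\mathbb{R}^N\setminus\{0\})$ since $H_0>0$ there.

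It then remains to prove the two identities. For \eqref{pre-eq:pro-H-con-2}: the relation $\xi=H(\xi)\nabla H_0(\nabla H(\xi))$ is exactly the statement that $\nabla H_0$ inverts $\nabla H$ up to the homogeneity factor; concretely, $\nabla(\tfrac1N H^N)=a$ and $\nabla(\tfrac1N H_0^N)$ are mutually inverse maps by the Legendre duality just established, and unwinding $a(\xi)=H^{N-1}(\xi)\nabla H(\xi)$ together with $\nabla H_0(tx)=\nabla H_0(x)$ (Lemma \ref{pre-lem:pro-H}) and Euler's relation $\langle \nabla H_0(x),x\rangle=H_0(x)$ gives the claimed formula; the second identity in \eqref{pre-eq:pro-H-con-2} follows by the symmetric argument (swapping the roles of $H$ and $H_0$ via biduality). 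For \eqref{pre-eq:pro-H-con-1}: apply $H$ to the first equation in \eqref{pre-eq:pro-H-con-2}, use $1$-homogeneity of $H$ to get $H_0(x)=H_0(x)\,H(\nabla H(\nabla H_0(x)))$— more directly, from $x=H_0(x)\nabla H(\nabla H_0(x))$ and applying $H_0$ with Lemma \ref{pre-lem:pro-H} one reads off $H(\nabla H_0(x))=1$, and symmetrically $H_0(\nabla H(\xi))=1$. I expect the main obstacle to be the careful bookkeeping of the homogeneity exponents when passing between $H$, $H^N$, $a=\tfrac1N\nabla H^N$ and their Legendre/polar duals — in particular verifying rigorously that the Legendre transform of $\tfrac1N H^N$ really is $\tfrac1N H_0^N$ (and not some other $N$-homogeneous function built from $H_0$), and that the relevant image cone is open so that the $C^2$-regularity transfers; the convexity step and the final identities are then short.
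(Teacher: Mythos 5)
Your overall strategy is sound and is genuinely different from what the paper does: the paper's ``proof'' of this lemma is purely a list of citations (convexity of $H$ from \cite[Lemma 2.5]{Cozzi2014}, the $C^2$ regularity of $H_0$ and \eqref{pre-eq:pro-H-con-1} from \cite[Lemma 2.3]{Cozzi2016} and \cite[Lemma 3.1]{Cianchi-Salani-2009}, and \eqref{pre-eq:pro-H-con-2} from the fact, stated there, that $H\nabla H$ is a $C^1$-diffeomorphism with inverse $H_0\nabla H_0$), whereas you actually reconstruct the Legendre-duality argument behind those references. Note that the cited sources work with the $2$-homogeneous potential $\tfrac12 H^2$, whose conjugate is $\tfrac12 H_0^2$ and whose gradient $H\nabla H$ has inverse $H_0\nabla H_0$; you instead work with $\tfrac1N H^N$, which is also viable but is exactly where your exponent bookkeeping goes wrong.

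The concrete error: the Legendre transform of $\tfrac1N H^N$ is \emph{not} $\tfrac1N H_0^N$. For a gauge $H$ and $p>1$ one has $\bigl(\tfrac1p H^p\bigr)^* = \tfrac{1}{p'}H_0^{p'}$ with $p'=p/(p-1)$ (write $\xi=t\eta$ with $H(\eta)=1$ and optimize $tH_0(x)-t^p/p$ via Young's inequality), so here the conjugate is $\tfrac{N-1}{N}H_0^{N/(N-1)}$, which is $N/(N-1)$-homogeneous, not $N$-homogeneous. Your own parenthetical computation already shows this: at $x=a(\xi)$ the conjugate equals $\tfrac{N-1}{N}H^N(\xi)$, and since $H_0(a(\xi))=H^{N-1}(\xi)H_0(\nabla H(\xi))=H^{N-1}(\xi)$ this is $\tfrac{N-1}{N}H_0(x)^{N/(N-1)}$; identifying it with $\tfrac{N-1}{N}H_0^N$ is inconsistent with homogeneity ($a$ is $(N-1)$-homogeneous, so the conjugate, whose gradient inverts $a$, must be $\tfrac{N}{N-1}$-homogeneous). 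Consequently the inverse of $a$ is $H_0^{1/(N-1)}(x)\nabla H_0(x)$, not $H_0^{N-1}(x)\nabla H_0(x)$. Fortunately this is the only defect and it is harmless for the conclusions: the corrected conjugate is still a constant times a positive power of $H_0$, so $C^2$ regularity of $H_0$ away from the origin transfers exactly as you argue; and unwinding $\xi=H_0^{1/(N-1)}(a(\xi))\nabla H_0(\nabla H(\xi))$ together with Euler's identity $\langle\xi,a(\xi)\rangle=H^N(\xi)$ yields first $H_0(\nabla H(\xi))=1$ and then $\xi=H(\xi)\nabla H_0(\nabla H(\xi))$, with the dual statements following by biduality as you say. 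You should also record, when invoking the inverse function theorem, why $a$ maps $\mathbb{R}^N\setminus\{0\}$ \emph{onto} $\mathbb{R}^N\setminus\{0\}$ (for $x\neq0$ the supremum defining the conjugate is attained by coercivity, and the maximizer $\xi$ satisfies $x=a(\xi)$ with $\xi\neq0$), so that the image cone is indeed open and all of $\mathbb{R}^N\setminus\{0\}$.
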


\begin{proof}
Let us give the precise references for these assertions. The convexity of $H$ was proved in \cite[Lemma 2.5]{Cozzi2014}. The regularity that $H_0$ is of class $C^2$ outside the origin and the formula \eqref{pre-eq:pro-H-con-1} were obtained in \cite[Lemma 2.3]{Cozzi2016} (see also \cite[Lemma 3.1]{Cianchi-Salani-2009}). There the authors also stated that the map $H\nabla H$ is a $C^1$-diffeomorphism of $\mathbb{R}^N$ with the inverse $H_0\nabla H_0$, from which we get \eqref{pre-eq:pro-H-con-2}. 
\end{proof}
Related to the differentiability of $H_0$, we remark further that since $H_0$ is actually the support function of the set $B_1^H$ (given by \eqref{intro-eq:def-B1H}), it is known that $H_0\in C^1(\mathbb{R}^N\setminus\{0\})$ if and only if $B_1^H$ is strictly convex (see \cite[Corollary 1.7.3]{RS1993}). In addition, from \cite[Lemma 3.1]{Cianchi-Salani-2009}, formulas \eqref{pre-eq:pro-H-con-1} and \eqref{pre-eq:pro-H-con-2} hold as long as $H,H_0\in C^1(\mathbb{R}^N\setminus\{0\})$. 

We conclude this part by presenting the following estimates gained from assuming $H$ to be uniformly elliptic.

\begin{lemma}\label{pre-lem:H-Hessian}
Assume that $H\in C^2({\mathbb{R}^N\setminus\{0\}})$ is uniformly elliptic. Then there exists $\lambda>0$ such that
\begin{equation}\label{pre-eq:Hess-H}
\partial^2_{ij}H^N(\xi)\eta_i\eta_j\geq \frac{1}{\lambda}|\xi|^{N-2}|\eta|^2\quad\text{and}\quad
\sum_{i,j}|\partial^2_{ij}H^N(\xi)|\leq \lambda|\xi|^{N-2}
\end{equation}
for any $\xi\in\mathbb{R}^N\setminus\{0\}$, $\eta\in\mathbb{R}^N
$. Furthermore, there exist $c_1,c_2>0$, depending only on $N$ and $\lambda$, such that 
\begin{gather}
\left<a(\xi_1)-a(\xi_2),\xi_1-\xi_2\right>\geq c_1(|\xi_1|+|\xi_2|)^{N-2}|\xi_1-\xi_2|^2,\label{pre-eq:mo-H-1}\\
\left|a(\xi_1)-a(\xi_2)\right|\leq c_2(|\xi_1|+|\xi_2|)^{N-2}|\xi_1-\xi_2|\label{pre-eq:mo-H-2},
\end{gather}
for any $\xi_1,\xi_2\in\mathbb{R}^N\setminus\{0\}$.
\end{lemma}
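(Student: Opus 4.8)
\textbf{Proof strategy for Lemma \ref{pre-lem:H-Hessian}.}

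The plan is to deduce everything from the defining property of uniform ellipticity, namely that $B_1^H$ is uniformly convex, together with the homogeneity of $H$. First I would record the scaling behaviour of the Hessian of $H^N$: since $H$ is positively homogeneous of degree one, $H^N$ is positively homogeneous of degree $N$, so $\partial^2_{ij}H^N$ is positively homogeneous of degree $N-2$, i.e. $\partial^2_{ij}H^N(\xi) = |\xi|^{N-2}\,\partial^2_{ij}H^N(\xi/|\xi|)$. Hence it suffices to establish \eqref{pre-eq:Hess-H} for $\xi \in \mathbb{S}^{N-1}$, where both sides are continuous functions of $\xi$ on the compact set $\mathbb{S}^{N-1}$. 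The upper bound is then immediate by compactness (take $\lambda$ at least $\max_{\mathbb{S}^{N-1}}\sum_{i,j}|\partial^2_{ij}H^N|$). For the lower bound, the uniform convexity of $B_1^H$ says precisely that the second fundamental form of $\partial B_1^H = \{H = 1\}$ is bounded below by a positive constant; translating this into a statement about $D^2 H$ on $\mathbb{S}^{N-1}$ (the Hessian of $H$ is positive semidefinite everywhere, with one-dimensional kernel spanned by $\xi$ itself by Euler's relation $\langle \nabla H(\xi),\xi\rangle = H(\xi)$, and on $\xi^\perp$ it is bounded below by a positive constant coming from uniform convexity), and then using $D^2 H^N = N H^{N-1} D^2 H + N(N-1)H^{N-2}\nabla H \otimes \nabla H$ to see that the rank-one term $\nabla H \otimes \nabla H$ exactly compensates on the missing direction $\xi$ (indeed $\langle \nabla H(\xi),\xi\rangle = H(\xi) > 0$), gives positive definiteness of $D^2 H^N$ on all of $\mathbb{R}^N$ at points of $\mathbb{S}^{N-1}$, with a uniform lower bound by compactness. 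This yields the first inequality in \eqref{pre-eq:Hess-H}.

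For the monotonicity and Lipschitz estimates \eqref{pre-eq:mo-H-1}--\eqref{pre-eq:mo-H-2}, the natural route is to write, for $\xi_1,\xi_2 \in \mathbb{R}^N\setminus\{0\}$,
\[
a(\xi_1)-a(\xi_2) = \frac1N\bigl(\nabla H^N(\xi_1)-\nabla H^N(\xi_2)\bigr) = \frac1N\left(\int_0^1 D^2 H^N\bigl(\xi_t\bigr)\,dt\right)(\xi_1-\xi_2),
\]
where $\xi_t = (1-t)\xi_2 + t\xi_1$. Then \eqref{pre-eq:mo-H-1} follows by pairing with $\xi_1-\xi_2$ and invoking the lower bound in \eqref{pre-eq:Hess-H} pointwise in $t$, after checking the elementary inequality $\int_0^1 |\xi_t|^{N-2}\,dt \gtrsim (|\xi_1|+|\xi_2|)^{N-2}$ when $N \ge 2$ (for $N=2$ this is trivial; for $N>2$ one bounds $|\xi_t|$ from below on a suitable subinterval of $[0,1]$, or uses convexity of $s \mapsto s^{N-2}$). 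Similarly \eqref{pre-eq:mo-H-2} follows from the upper bound in \eqref{pre-eq:Hess-H} together with $\int_0^1 |\xi_t|^{N-2}\,dt \lesssim (|\xi_1|+|\xi_2|)^{N-2}$, which is immediate since $|\xi_t| \le |\xi_1| + |\xi_2|$. One subtlety: the segment $\xi_t$ may pass through the origin (when $\xi_1, \xi_2$ are antiparallel), where $D^2 H^N$ is only known to be continuous, not $C^2$; but since $D^2 H^N$ extends continuously to $0$ with value $0$ (as $N \ge 2$ and the Hessian is $O(|\xi|^{N-2})$), the integral representation still holds by a routine approximation, integrating over $[0,1]\setminus\{t_0\}$ and noting the integrand is bounded (for $N=2$) or vanishes at $t_0$ (for $N>2$).

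I expect the main obstacle to be the lower bound in \eqref{pre-eq:Hess-H}: making precise the passage from ``$B_1^H$ is uniformly convex'' to ``$D^2 H^N \ge \tfrac1\lambda|\xi|^{N-2}\mathrm{Id}$''. The point is that $D^2 H$ itself is always degenerate in the radial direction, so uniform convexity of the sublevel set only directly controls $D^2 H$ on the tangent hyperplane $\xi^\perp$; one must then use the algebraic identity for $D^2 H^N$ above to recover nondegeneracy in the full space. This is essentially the content of \cite[Lemma 2.5 and surrounding discussion]{Cozzi2014}, to which I would appeal for the detailed computation, and the remaining estimates \eqref{pre-eq:mo-H-1}--\eqref{pre-eq:mo-H-2} are then standard consequences via the mean-value/integral representation above, as in \cite{BCS2016,Bianchini-Ciraolo-2018}.
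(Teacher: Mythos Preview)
Your proposal is correct and follows essentially the same approach as the paper: the paper's proof simply cites \cite[Theorem 1.5]{Cozzi2016} for \eqref{pre-eq:Hess-H} and \cite[Lemma 2.1]{Damascelli1998} for \eqref{pre-eq:mo-H-1}--\eqref{pre-eq:mo-H-2}, and what you have written is precisely the content of those references unpacked (homogeneity reduces \eqref{pre-eq:Hess-H} to a compactness argument on $\mathbb{S}^{N-1}$, and \eqref{pre-eq:mo-H-1}--\eqref{pre-eq:mo-H-2} follow from \eqref{pre-eq:Hess-H} via the integral mean-value representation). Your citations point to slightly different sources (\cite{Cozzi2014} rather than \cite{Cozzi2016}, and \cite{BCS2016,Bianchini-Ciraolo-2018} rather than \cite{Damascelli1998}), but the underlying arguments coincide.
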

\begin{proof}
We refer the validity of \eqref{pre-eq:Hess-H} to \cite[Theorem 1.5]{Cozzi2016}. Then estimates \eqref{pre-eq:mo-H-1} and \eqref{pre-eq:mo-H-2} follow by applying \cite[Lemma 2.1]{Damascelli1998}.
\end{proof}
 
Once \eqref{pre-eq:Hess-H} holds, the Finsler $N$-Laplacian $\Delta_N^H$ is a possibly degenerate elliptic operator and we are allowed to apply standard regularity theory for quasilinear PDEs developed in \cite{DiBen1983,Serrin1964,Tolksdorf1984}  to equation \eqref{eq:log-cone}. 

\subsection{Comparison principles} 
 We derive the following weak comparison principles for the operator $\Delta_N^H$ in bounded domains inside a convex cone. For convenience, we write below $\Gamma_0:=\Sigma\cap\partial E$ and $\Gamma_1:=\partial\Sigma\cap E$, consistent with the notation used in \eqref{eq:log-cone}. 
\begin{lemma}\label{pre-lem:WCP}
Let $\Sigma\subset\mathbb{R}^N$ be an open, convex cone and $E\subset\mathbb{R}^N$ be a bounded domain such that $\mathcal{H}^{N-1}(\Gamma_0)>0$ and $\Sigma\cap E$ is connected. Let $H$ be as in Lemma \ref{pre-lem:H-Hessian}. Assume that $u,v\in W^{1,N}(\Sigma\cap E)\cap C^0((\Sigma\cap E)\cup\Gamma_0)$ satisfy
\begin{equation}\label{pre-eq:WCP}
\left\{
\begin{array}{ll}
-\Delta_N^Hu\leq -\Delta_N^Hv &\text{in }\Sigma\cap E,\\
u\leq v &\text{on }\Gamma_0,\\
\left<a(\nabla u),\nu\right>=\left<a(\nabla v),\nu\right>=0 &\text{on }\Gamma_1.
\end{array}
\right.
\end{equation}
Then $u\leq v$ in $\Sigma\cap E$.
\end{lemma}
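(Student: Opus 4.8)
The plan is to use the standard Moser–type / comparison argument adapted to the mixed boundary conditions, testing the equation with the positive part of $u-v$. First I would set $w:=(u-v)^+$, where $w^+$ denotes the positive part, and observe that because $u\le v$ on $\Gamma_0$ we have $w=0$ on $\Gamma_0$ in the $W^{1,N}$ sense, while $w\in W^{1,N}(\Sigma\cap E)$ with bounded support (since $E$ is bounded). Hence $w$ is an admissible test function for the weak formulations associated with both $u$ and $v$: the homogeneous Neumann condition on $\Gamma_1$ is encoded by the fact that the weak formulation only requires test functions to vanish on $\Gamma_0$, so the boundary term on $\Gamma_1$ does not appear. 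Subtracting the two weak formulations and using $\nabla w=\nabla(u-v)$ a.e. on $\{u>v\}$ and $\nabla w=0$ elsewhere, I obtain
\begin{equation*}
\int_{\{u>v\}\cap\Sigma\cap E}\left\langle a(\nabla u)-a(\nabla v),\nabla u-\nabla v\right\rangle\,dx\le 0.
\end{equation*}

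Next I would invoke the strict monotonicity of the vector field $a$, which follows from the uniform ellipticity of $H$ via estimate \eqref{pre-eq:mo-H-1} in Lemma \ref{pre-lem:H-Hessian}: the integrand is bounded below by $c_1(|\nabla u|+|\nabla v|)^{N-2}|\nabla u-\nabla v|^2\ge 0$, and when $N\ge 2$ this forces $\nabla u=\nabla v$ a.e. on the set $\{u>v\}$ (for $N=2$ it is immediate; for $N>2$ one uses that the integrand vanishes only where $\nabla u=\nabla v$, handling the locus $\nabla u=\nabla v=0$ separately since there the difference of gradients is already zero). Consequently $\nabla w=0$ a.e. in $\Sigma\cap E$, so $w$ is locally constant on the connected set $\Sigma\cap E$; since $w=0$ on $\Gamma_0$ and $\mathcal H^{N-1}(\Gamma_0)>0$, continuity of $u,v$ up to $\Gamma_0$ gives $w\equiv 0$, i.e. $u\le v$ in $\Sigma\cap E$.

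The point requiring the most care is the justification that $w=(u-v)^+$ is genuinely an admissible test function, i.e. that it lies in $W^{1,N}(\Sigma\cap E)$, vanishes on $\Gamma_0$ in the sense prescribed in the paper's footnote (limit of $C^\infty(\overline{\Sigma\cap E})$ functions vanishing near $\overline{\Gamma_0}$), and has bounded support. Boundedness of the support is clear from boundedness of $E$. Membership in $W^{1,N}$ follows from $u,v\in W^{1,N}(\Sigma\cap E)$ and the fact that truncation at $0$ is a Lipschitz operation preserving Sobolev regularity. The delicate step is the boundary condition on $\Gamma_0$: one uses that $u-v$ is the $W^{1,N}$-limit of smooth functions vanishing near $\overline{\Gamma_0}$ (since both $u$ and $v$ are, by definition of "$=0$ on $\Gamma_0$" applied to $u$ and the hypothesis $u\le v$ on $\Gamma_0$ interpreted appropriately), and that the positive-part operation passes to the limit; alternatively, when $\partial E$ is Lipschitz near $\Gamma_0$ one argues via traces, $T_{\Gamma_0}w=(T_{\Gamma_0}u-T_{\Gamma_0}v)^+\le 0$ hence $=0$. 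A subsidiary technical issue is that $\Sigma\cap E$ need not have Lipschitz boundary globally (the cone and $E$ may intersect badly), so one should phrase the argument using only the weak formulation with test functions of bounded support vanishing on $\Gamma_0$, exactly as in \eqref{eq:log-cone}, thereby avoiding any appeal to global trace theory. Finally, to run the Moser iteration or simply the above one-line argument one may need an approximation step replacing $w$ by $\min(w,k)$ or by $w$ mollified, but since $w$ already has bounded support and lies in $W^{1,N}$ this is straightforward and I would not belabor it.
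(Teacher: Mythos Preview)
Your proof is correct and follows essentially the same approach as the paper: test with $w=(u-v)^+$, use the monotonicity estimate \eqref{pre-eq:mo-H-1} to force $\nabla w=0$ a.e., and conclude $w\equiv 0$ from connectedness of $\Sigma\cap E$ and the vanishing on $\Gamma_0$. The paper is terser, passing directly from \eqref{pre-eq:mo-H-1} to $\int_{\Sigma\cap E}|\nabla(u-v)^+|^N\,dx=0$ (using $|\xi_1|+|\xi_2|\ge|\xi_1-\xi_2|$), whereas you discuss the admissibility of the test function more carefully; but the argument is the same.
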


\begin{proof}
By the weak formulation of \eqref{pre-eq:WCP} and since $u\leq v$ on $\Gamma_0$, we can use $(u-v)^+$ as a test function to get
$$\int_{\Sigma\cap E}\left<a(\nabla u)-a(\nabla v),\nabla(u-v)^+\right>\,dx\leq 0.$$
Thus, it follows from \eqref{pre-eq:mo-H-1} that
$$\int_{\Sigma\cap E}|\nabla (u-v)^+|^{N}\,dx=0,$$
which means that $(u-v)^+$ is constant in $\Sigma\cap E$. Since $(u-v)^+=0$ on $\Gamma_0$, we infer that $(u-v)^+\equiv0$, i.e. $u\leq v$ in $\Sigma\cap E$.
\end{proof}

We remark that when $\Sigma=\mathbb{R}^N$ and $H$ is a norm, Lemma \ref{pre-lem:WCP} is well-known (see for instance \cite{Bianchini-Ciraolo-2018,Sciunzi2019} where more general anisotropic elliptic operators are concerned).

\subsection{Anisotropic isoperimetric inequality inside convex cones}
In this subsection, we shall let $H:\mathbb{R}^N\to\mathbb{R}$ be a gauge, i.e., a nonnegative, positively homogeneous of degree one, convex function; and we also assume that $H$ is positive on $\mathbb{S}^{N-1}$. Notice that if we further assume $H$ to be even, then it becomes a norm in $\mathbb{R}^N$.

Given an open subset $D\subset\mathbb{R}^N$ and a measurable set $E\subset\mathbb{R}^N$, we recall the definition of anisotropic perimeter of $E$ in $D$ with respect to the gauge $H$, given by
\begin{align}
P_H(E;D)&=\sup\left\{\int_E\mathrm{div}\,\Phi\,dx:\Phi\in C_0^1(D;\mathbb{R}^N),H_0(\Phi)\le1\right\}\label{pre-eq:def-a-perimeter}\\
&=\int_{D\cap\partial^*E}H(\nu)\,d\mathcal{H}^{N-1}\notag
\end{align}
where $H_0$ is the dual function of $H$ defined by \eqref{intro-eq:def-H0}, $\partial^*E$ is the reduced boundary of $E$ and $\nu$ is the outer normal to $E$. 
Then the following result holds.
\begin{theorem}\label{pre-thm:Wulff-cone}
Let $\Sigma=\mathbb{R}^k\times\tilde{\Sigma}$ where $k\in\{0,\cdots,N\}$ and $\tilde{\Sigma}\subset\mathbb{R}^{N-k}$ is an open, convex cone with vertex at the origin which contains no lines. Let $H$ be a gauge in $\mathbb{R}^N$ which is positive on $\mathbb{S}^{N-1}$ and let $H_0$ be its dual function defined by \eqref{intro-eq:def-H0}. Then for each measurable set $E\subset\mathbb{R}^N$ with $\mathcal{H}^N(\Sigma\cap E)<\infty$,
\begin{equation*}
\frac{P_{H}(E;\Sigma)}{\mathcal{H}^N(\Sigma\cap E)^{\frac{N-1}{N}}}\geq \frac{P_H(B_1^{H_0};\Sigma)}{\mathcal{H}^N(\Sigma\cap B_1^{H_0})^{\frac{N-1}{N}}}.
\end{equation*}
Moreover, the equality sign holds if and only if $\Sigma\cap E=\Sigma\cap B_R^{H_0}(x_0)$ for some $R>0$ and $x_0\in\mathbb{R}^k\times\{0\}$.
\end{theorem}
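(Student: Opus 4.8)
The plan is to split the assertion into the inequality itself and the characterization of equality, and to reduce both to the anisotropic (``Wulff'') isoperimetric inequality on the whole of $\mathbb{R}^N$ by ``unfolding'' the cone into a suitable (possibly degenerate) anisotropy. For the inequality I would simply quote Cabr\'e--Ros-Oton--Serra \cite{CRS2016}, whose hypotheses require only that $H$ be a gauge which is positive on $\mathbb{S}^{N-1}$ --- exactly the present setting, with no evenness or regularity needed --- so the displayed inequality holds verbatim. The easy half of the equality statement, namely that $\Sigma\cap B_R^{H_0}(x_0)$ is optimal whenever $x_0\in\mathbb{R}^k\times\{0\}$, is a direct check: a translation by a vector of $\mathbb{R}^k\times\{0\}$ and a dilation centred at such a vector both map $\Sigma$ onto itself, so $P_H(\cdot\,;\Sigma)$ and $\mathcal{H}^N(\Sigma\cap\cdot)$ transform by the factors $R^{N-1}$ and $R^N$, and the quotient in the statement is the same for every such Wulff sector as for $B_1^{H_0}$.

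The substance is the rigidity: equality forces $\Sigma\cap E$ to be a Wulff sector. Here I would follow the strategy of Figalli--Indrei \cite{Figalli-Indrei-2013} and Dipierro--Poggesi--Valdinoci \cite{DPV2021}, which carries over from norms to gauges with no essential change. Since $P_H(E;\Sigma)$ and $\mathcal{H}^N(\Sigma\cap E)$ depend on $E$ only through $E\cap\Sigma$, I may assume $E\subseteq\Sigma$. To the cone I attach the bounded convex body $K:=\overline{\Sigma}\cap B_1^{H_0}$ (which contains the origin) and the anisotropic perimeter $P_{h_K}(F):=\int_{\partial^*F}h_K(\nu)\,d\mathcal{H}^{N-1}$ on $\mathbb{R}^N$, where $h_K$ is the support function of $K$; its Wulff shape is $K$, and $h_K$ vanishes precisely on the outer normals to $\Sigma$. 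For $E\subseteq\Sigma$ one checks that at $\mathcal{H}^{N-1}$-a.e.\ point of $\partial^*E\cap\partial\Sigma$ the outer normal to $E$ coincides with that of $\Sigma$ (a consequence of $E\subseteq\Sigma$ and convexity of $\Sigma$), where $h_K=0$, whereas $h_K(\nu)\le H(\nu)$ for $\mathcal{H}^{N-1}$-a.e.\ $\nu$ on $\partial^*E\cap\Sigma$; hence $P_{h_K}(E)\le P_H(E;\Sigma)$. Together with the identities $P_{h_K}(K)=P_H(B_1^{H_0};\Sigma)$ and $\mathcal{H}^N(K)=\mathcal{H}^N(\Sigma\cap B_1^{H_0})$ (both easily checked, using for the first the divergence theorem and that $\langle x,\nu\rangle$ vanishes on $\partial\Sigma$), the Wulff inequality $P_{h_K}(F)\ge P_{h_K}(K)\,(\mathcal{H}^N(F)/\mathcal{H}^N(K))^{(N-1)/N}$ applied to $F=E$ reproves the displayed inequality.

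Now suppose equality holds in the displayed inequality. Then every intermediate inequality above is an equality; in particular $E$ is an equality case of the Wulff inequality for $h_K$, so by the rigidity of the anisotropic isoperimetric inequality, valid for an arbitrary convex body $K$ (see e.g.\ \cite{Fonseca1991,FMP2010}), $E$ agrees up to a null set with $x_0+RK=(x_0+\overline{\Sigma})\cap B_R^{H_0}(x_0)$ for some $R>0$ and $x_0\in\mathbb{R}^N$. Moreover $P_{h_K}(E)=P_H(E;\Sigma)$, that is $\int_{\partial^*E\cap\Sigma}\bigl(H(\nu)-h_K(\nu)\bigr)\,d\mathcal{H}^{N-1}=0$; since on the ``translated wall'' $(x_0+\partial\Sigma)\cap\Sigma$ one has $h_K(\nu)=0<H(\nu)$ and $H-h_K\ge0$ everywhere, that set must be $\mathcal{H}^{N-1}$-negligible. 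An elementary argument then shows that this can happen only if $x_0$ lies in the lineality space $\mathbb{R}^k\times\{0\}$ of $\Sigma$, whence $x_0+\Sigma=\Sigma$ and $\Sigma\cap E=\Sigma\cap B_R^{H_0}(x_0)$, as claimed.

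I expect the main obstacle to be these last two points. First, the Wulff rigidity has to be invoked for the support function of a convex body $K$ whose interior does not contain the origin, so $h_K$ degenerates (it vanishes on a full cone of directions); although this is still within the scope of the general theory, one must make sure that the equality case remains exactly ``$F$ is a homothet of $K$''. Second, and more delicate, is the geometric step pinning the centre $x_0$ to $\mathbb{R}^k\times\{0\}$: concretely, one needs that for every $b\in\overline{\tilde{\Sigma}}\setminus\{0\}$ the set $(b+\partial\tilde{\Sigma})\cap\tilde{\Sigma}$ has positive $\mathcal{H}^{N-k-1}$-measure and accumulates at $b$ --- it is exactly here, and only here, that the hypothesis ``$\tilde{\Sigma}$ contains no line'' is used (if $\tilde{\Sigma}$ contained a line, $x_0$ could be shifted along it). A last, purely bookkeeping point is to verify that the argument of \cite{DPV2021}, written there for a norm $H$, never uses $H(\xi)=H(-\xi)$; this is the case, since it only invokes convexity, positive one-homogeneity, positivity on $\mathbb{S}^{N-1}$, and the general theory of sets of finite perimeter.
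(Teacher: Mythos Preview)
Your proposal is correct and follows exactly the route the paper takes: the inequality is quoted from \cite{CRS2016}, and the rigidity is obtained by the Figalli--Indrei/Dipierro--Poggesi--Valdinoci reduction to the (degenerate) Wulff inequality for the body $K=\overline{\Sigma}\cap B_1^{H_0}$, with the observation that the argument of \cite{DPV2021} requires only that $H$ be a positive gauge (so that $K$ is still an open bounded convex set), not a norm. The paper in fact omits the details and simply refers to \cite{CRS2016,DPV2021,Figalli-Indrei-2013}; your sketch is a faithful expansion of what is in those references, including the two points you flag as delicate (degenerate Wulff rigidity and the use of ``$\tilde{\Sigma}$ contains no line'' to pin $x_0$ to $\mathbb{R}^k\times\{0\}$).
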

Here $B_R^{H_0}(x_0)$ is as in \eqref{intro-eq:def-BH0} and $B_1^{H_0}:=B_1^{H_0}(0)$.

\begin{proof}
As already mentioned in the Introduction, this inequality has been obtained in \cite[Theorem 1.3]{CRS2016}. The characterization of the equality cases was showed in \cite[Theorem 4.2]{DPV2021}  by adapting the ideas of \cite[Theorem 2.2]{Figalli-Indrei-2013}, under the  assumption that $H$ is a norm. Nevertheless, the argument in \cite{DPV2021} works the same way considering a positive gauge $H$, since in this case $\Sigma\cap B_1^{H_0}$ is still an  open bounded convex set. For this reason, we omit the proof.
\end{proof}

\subsection{Second-order regularity for weak solutions}\label{subsec:second-order}
This subsection is concerned with the regularity of $W_{loc}^{2,2}$ type for weak solutions $u$ to problem \eqref{eq:log-cone}. Inspired by the approach in \cite{Avelin2018,CM2018,CFR2020}, in Proposition \ref{pre-pro:second-order} below we establish a differentiability result about $\nabla u$, which will be useful in Section \ref{sec:Pohozaev} to derive a Pohozaev-type identity and in the proof of Theorem \ref{thm:over-log-cone} in Section \ref{sec:proof}. In particular, it allows us to avoid assuming $u$ is $C^1$ up to $\Gamma_1$ which is required in \cite{Ciraolo2020,DPV2021,Pacella-Tralli-2020} to prove symmetry results for analogous problems defined in convex cones.
\begin{proposition}\label{pre-pro:second-order}
Let $\Sigma\subset\mathbb{R}^N$ be an open, convex cone and $\Omega\subset\mathbb{R}^N$ be a bounded domain. Let $u$ be a weak solution to problem \eqref{eq:log-cone} where $H$ is as in Theorem \ref{thm:over-log-cone}. Then 
$a(\nabla u)\in W_{loc}^{1,2}((\Sigma\cap\overline{\Omega}^c)\cup\Gamma_1)$.
\end{proposition}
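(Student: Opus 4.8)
The goal is to show that the nonlinear vector field $a(\nabla u)$ has locally square-integrable weak derivatives up to the flat part $\Gamma_1$ of the boundary, without requiring global $C^1$ regularity of $u$ there. My plan is to use a difference-quotient argument adapted to the mixed boundary condition, following the approach of \cite{Avelin2018,CM2018,CFR2020}. First I would fix an interior point or a boundary point $x_0\in\Gamma_1$ and localize: since $\Gamma_1\subset\partial\Sigma$ and $\partial\Sigma$ is, after the splitting $\Sigma=\mathbb{R}^k\times\tilde\Sigma$, locally a portion of a hyperplane near any of its smooth points (and $\partial\Sigma$ is locally Lipschitz away from the lower-dimensional edges), we can work in a half-ball $B^+$ whose flat face lies on a hyperplane $\{x_N=0\}\subset\partial\Sigma$ with outer normal $\nu=-e_N$. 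The homogeneous Neumann condition $\langle a(\nabla u),\nu\rangle=0$ then reads $a(\nabla u)\cdot e_N=0$ on the flat face, which is precisely the condition that allows even reflection of the tangential difference quotients.

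The core estimate: take a direction $e_h$ tangential to the flat face ($h\in\{1,\dots,N-1\}$) and test the weak formulation with $\varphi=\tau_{-h,t}\big(\eta^2\,\tau_{h,t}u\big)$, where $\tau_{h,t}v(x)=v(x+te_h)-v(x)$ is the difference quotient and $\eta$ is a cutoff supported in $B^+$; tangentiality guarantees this test function is admissible (it vanishes on $\Gamma_0$ and respects the geometry near $\Gamma_1$, since translating in a direction parallel to $\partial\Sigma$ keeps us inside the cone). Moving the inner difference quotient onto $a(\nabla u)$ and using the monotonicity and growth bounds \eqref{pre-eq:mo-H-1}--\eqref{pre-eq:mo-H-2} from Lemma \ref{pre-lem:H-Hessian}, together with $\nabla u\in L^\infty_{loc}$ — here I would use the $C^1$ interior regularity plus Lieberman's boundary regularity up to $\Gamma_1$, both already recalled in the paper — one gets a uniform bound
\[
\int \eta^2\,\big|\tau_{h,t}\,a(\nabla u)\big|^2\,dx\le C,
\]
independent of $t$, for each tangential direction $h=1,\dots,N-1$. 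Letting $t\to0$ yields $\partial_{x_h}a(\nabla u)\in L^2_{loc}$ for the tangential directions. To recover the normal derivative $\partial_{x_N}a(\nabla u)$, I would exploit the PDE itself: since $\mathrm{div}\,a(\nabla u)=0$ weakly (indeed in $L^2_{loc}$ once the tangential derivatives are controlled), the component $\partial_{x_N}\big(a(\nabla u)\cdot e_N\big)$ equals minus the sum of the tangential derivatives of the other components, which we have just bounded; combined with the structure $a(\xi)=H^{N-1}(\xi)\nabla H(\xi)$ and the uniform ellipticity, one then bootstraps to the full gradient of $a(\nabla u)$ in $L^2_{loc}$. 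Near the flat portions of $\Gamma_1$ the even reflection across $\{x_N=0\}$ (legitimate precisely because of the Neumann condition) reduces everything to the interior case, so no separate boundary argument beyond admissibility of the test function is needed.

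\textbf{Main obstacle.} The delicate point is the geometry of $\Sigma\cap\overline\Omega^c$ near $\Gamma_1$ and near the edges of the cone. Away from the vertex and the $(N-2)$-dimensional edges, $\partial\Sigma$ is a smooth hyperplane piece and the reflection/difference-quotient scheme goes through cleanly; but one must be careful that the localization sets $(\Sigma\cap\overline\Omega^c)\cup\Gamma_1$ genuinely avoid the non-smooth strata, which is why the statement is phrased with $W^{1,2}_{loc}$ on that set and not up to all of $\partial\Sigma$. The second subtlety is that the difference-quotient test function must be shown to be an admissible competitor in the weak formulation — it must vanish on $\Gamma_0$ and have bounded support inside $\Sigma\cap\overline\Omega^c$; choosing the cutoff $\eta$ to be supported away from $\Gamma_0$ and from $\partial\Omega$ handles this, at the cost of only proving the local statement, which is all that is claimed. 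Finally, extracting the normal derivative requires that the tangential part of $\mathrm{div}\,a(\nabla u)$ is controlled in $L^2$ before one can invoke the equation; this is consistent because the tangential difference-quotient bound is established first, and only then is the normal direction addressed — so the logical order of the three parts (tangential estimate, use of the PDE, reflection at $\Gamma_1$) must be respected.
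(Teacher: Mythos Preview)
Your plan rests on a geometric claim that is false: you assert that ``$\partial\Sigma$ is, after the splitting $\Sigma=\mathbb{R}^k\times\tilde\Sigma$, locally a portion of a hyperplane near any of its smooth points.'' This holds only for polyhedral cones. A general open convex cone---for instance the circular cone $\{x_N>\sqrt{x_1^2+\cdots+x_{N-1}^2}\}$---has a boundary that is $C^\infty$ away from the vertex but nowhere flat. Consequently there are no tangential coordinate directions in which translation preserves $\Sigma$, the even reflection across $\{x_N=0\}$ is unavailable, and your tangential difference-quotient test function $\tau_{-h,t}(\eta^2\tau_{h,t}u)$ is not admissible. One could try to flatten $\partial\Sigma$ by a local diffeomorphism, but the Finsler $N$-Laplacian is not invariant under such maps and the resulting variable-coefficient problem would require a substantially different argument.

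The paper's proof avoids flatness entirely. It proceeds by a double approximation: first replace $\Sigma$ locally by smooth convex sets $\Sigma_k\supset\Sigma$ (so $\partial\Sigma_k$ is smooth but still curved), and on each $\Sigma_k$ replace the degenerate operator by a regularized non-degenerate one $\mathrm{div}\,a^\epsilon(\nabla\cdot)$, obtaining $C^2$ solutions $u_k^\epsilon$. The Caccioppoli-type estimate is then derived by testing the differentiated equation with $a^\epsilon_m(\nabla u_k^\epsilon)\psi^2$; the crucial point is that the boundary term on $\partial\Sigma_k$ has a favorable sign because of \emph{convexity} (not flatness), via the argument of \cite[Proposition~2.8]{CFR2020}. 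Uniform bounds then survive the limits $\epsilon\to0$ and $k\to\infty$. This also sidesteps your secondary difficulty of recovering the normal derivative from the tangential ones: for a degenerate equation that ``bootstrap'' is not straightforward (the equation only controls one scalar combination of $\partial_N a_i$), whereas the regularized problem yields the full Caccioppoli bound directly. Finally, your appeal to Lieberman's boundary $C^{1,\alpha}$ regularity on $\Gamma_1$ is only valid where $\partial\Sigma$ is $C^{1,\alpha}$, which again need not hold everywhere on $\Gamma_1$; the paper obtains the needed gradient bound on the smooth approximants $\Sigma_k$ instead.
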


\begin{proof}
From \cite[Theorem 4.2]{Avelin2018} (see also \cite{ACF}), we already know that $a(\nabla u)\in W_{loc}^{1,2}(\Sigma\cap\overline{\Omega}^c)$. It suffices to show $a(\nabla u)\in W^{1,2}(B_\rho(x)\cap(\Sigma\cap\overline{\Omega}^c))$ for each $x\in\Gamma_1$ and some small $\rho=\rho(x)>0$. Hereafter, $B_\rho(x)$ denotes the Euclidean ball centered at $x$ and having radius $\rho$. The main issue is due to the degeneracy of the equation as well as the non-smoothness of $\Gamma_1$. We shall deal with that by approximation.

Fix $y\in\Gamma_1$ and $B_\rho(y)$ with small $\rho>0$ such that $B_\rho(y)$ is away from $\Omega$. Let $\Sigma_k\supset\Sigma$ be a family of smooth convex sets in $\mathbb{R}^N$ such that $\mathcal{H}^N((\Sigma_k\setminus\Sigma)\cap B_\rho(y))\to 0$ and the Hausdorff distance between $\Sigma_k$ and $\Sigma$ inside $B_\rho(y)$ tends to $0$ as $k\to\infty$. For simplicity, we will denote by $$B^\rho:=B_\rho(y)\cap\Sigma\quad\text{and}\quad B^{\rho,k}:=B_\rho(y)\cap\Sigma_k.$$
Also, as in \cite[Formula (4.58)]{CM2018} we require that the Lipschitz constant of $B^{\rho,k}$ satisfies
\begin{equation}\label{pre-eq:Lipschitz-Ome}
L_{B^{\rho,k}}\leq CL_{B^\rho}
\end{equation} 
for some constant $C$ independent of $k$.

Since $u\in W^{1,N}(B^\rho)$, by extension theorem, there is $\tilde{u}\in W^{1,N}(\mathbb{R}^N)$ such that $\tilde{u}=u$ in $B^\rho$.  Let us consider the following equation
\begin{equation}\label{pre-eq:appr-k-cone}
\left\{
\begin{array}{ll}
\Delta_N^H u_k=0 & \text{in }B^{\rho,k},\\
u_k=\tilde{u} & \text{on }\Sigma_k\cap\partial B_\rho(x):=\Gamma^0_{\rho,k},\\
\left<a(\nabla u_k),\nu\right>=0 & \text{on }\partial\Sigma_k\cap B_\rho(x):=\Gamma^1_{\rho,k}.
\end{array}
\right.
\end{equation}
We notice that the existence of a weak solution $u_k$ to \eqref{pre-eq:appr-k-cone} follows by solving the minimization problem
\begin{equation}\label{pre-eq:mini-u-k}
\inf\left\{\frac{1}{N}\int_{B^{\rho,k}}H^N(\nabla w)\,dx:w\in W^{1,N}(B^{\rho,k}), w=\tilde{u}\text{ on }\Gamma^0_{\rho,k}\right\}.
\end{equation}
We first claim that $u_k\to u$ in $C_{loc}^1(B^\rho)$, by taking, if necessary, a subsequence of $k\to\infty$. 

Indeed, choosing $u_k-\tilde{u}$ as a test function in weak formulation of \eqref{pre-eq:appr-k-cone} and applying the Poincar\'e inequality (see for instance \cite[Corollary 4.5.2]{Ziemer1989}), we find that $$\| u_k\|_{W^{1,N}(B^\rho)}\leq C\|u\|_{W^{1,N}(B^\rho)}$$ for some constant $C=C(N,H,B^\rho)$. Thus, the Sobolev embedding theorem ensures that, up to a subsequence,
\begin{equation*}
u_k\to v\text{ in } L^N(B^\rho)\quad\text{and}\quad u_k\rightharpoonup v\text{ in } W^{1,N}(B^\rho)
\end{equation*}
for some function $v\in W^{1,N}(B^\rho)$. Furthermore, by interior $L^\infty$ estimates in \cite{Serrin1964} and interior $C^{1,\gamma}$ estimates in \cite{DiBen1983,Tolksdorf1984} for quasilinear PDEs, we can infer that for every compact subset $K\subset B^\rho$, $\|u_k\|_{C^{1,\gamma}(K)}$ is uniformly bounded. Thus, by Arzela--Ascoli theorem we get
\begin{equation}\label{pre-eq:C1-app-u-k}
u_k\to v\quad\text{and}\quad\nabla u_k\to\nabla v \,\text{ pointwise in }B^\rho,
\end{equation}
along a subsequence of $k\to\infty$. These convergence results imply that the function $v$ is a weak solution to \eqref{pre-eq:appr-k-cone}. Therefore, $v=u$ by the uniqueness.

Next, we will show $a(\nabla u_k)\in W^{1,2}(B^{\frac{\rho}{6},k})$ and derive a uniform bound for $\|a(\nabla u_k)\|_{W^{1,2}(B^{\frac{\rho}{6},k})}$. The idea is to introduce a family of regularized non-degenerate equations \eqref{pre-eq:appr-e-k-cone} below and establish a Caccioppoli type estimate for their solutions $u_k^\epsilon$ which approximate $u_k$. Then the desired bound is obtained by taking the limit as $\epsilon\to 0$. 

We start by letting $\{\phi_\epsilon\}$ with $\epsilon\in(0,1)$ be a family of radially symmetric smooth mollifiers and define $$a^\epsilon(\xi):=(a*\phi_\epsilon)(\xi)\quad\text{for }\xi\in\mathbb{R}^N.$$
Here the symbol $*$ stands for the convolution. Since $a(\cdot)$ is continuous, standard properties of convolutions imply $a^\epsilon\to a$ uniformly on compact subset of $\mathbb{R}^N$, as $\epsilon\to 0$. Moreover, following \cite[Lemma 2.4]{FF1997} it is seen that $a^\epsilon$ satisfies
\begin{equation*}
\left<\nabla a^\epsilon(\xi)\eta,\eta\right>\geq\frac{1}{\lambda}(|\xi|^2+\epsilon^2)^{\frac{N-2}{2}}|\eta|^2\quad\text{and}\quad|\nabla a^\epsilon(\xi)|\leq\lambda(|\xi|^2+\epsilon^2)^{\frac{N-2}{2}}
\end{equation*}
for every $\xi,\eta\in\mathbb{R}^N$, with $\lambda>0$ given by \eqref{pre-eq:Hess-H}. In addition, from \cite[Formula (2.4)]{Avelin2018}, the following monotonicity condition holds:
\begin{equation}\label{pre-eq:monoto-e}
\left<a^\epsilon(\xi)-a^\epsilon(\eta),\xi-\eta\right>\geq C(N,\lambda)(|\xi|^2+|\eta|^2+\epsilon^2)^{\frac{N-2}{2}}|\xi-\eta|^2.
\end{equation}

Let $u^\epsilon_k\in W^{1,N}(B^{\rho,k})$ be the weak solution of 
\begin{equation}\label{pre-eq:appr-e-k-cone}
\left\{
\begin{array}{ll}
\mathrm{div}\,(a^\epsilon(\nabla u^\epsilon_k))=0 & \text{in }B^{\rho,k},\\
u^\epsilon_k=\tilde{u} & \text{on }\Gamma^0_{\rho,k},\\
\left<a^\epsilon(\nabla u^\epsilon_k),\nu\right>=0 & \text{on }\Gamma^1_{\rho,k},
\end{array}
\right.
\end{equation}
which can be obtained by considering \eqref{pre-eq:mini-u-k} where the function $H^N$ is replaced by $H^N_\epsilon$ defined as $$H^N_\epsilon(\xi):=(H^N*\phi_\epsilon)(\xi)\quad\text{for }\xi\in\mathbb{R}^N.$$ 
Since equation \eqref{pre-eq:appr-e-k-cone} is non-degenerate and $\Gamma^1_{\rho,k}$ is smooth, one has $u^\epsilon_k\in C_{loc}^2(B^{\rho,k}\cup\Gamma^1_{\rho,k})$ by classical regularity theory for elliptic equations.

 We also let $\varphi\in C_0^\infty(B_\rho(y))$ and let $\zeta_\delta:\overline{B^{\rho,k}}\to[0,1]$ be a family of smooth functions\footnote{Such $\zeta_\delta$ can be constructed as follows. For $\delta>0$, let $\psi_\delta\in C^\infty(\mathbb{R})$ with $\psi_\delta=0$ on $(-\infty,\delta]$, $\psi_\delta=1$ on $[2\delta,+\infty)$ and $\psi'_\delta\leq\frac{2}{\delta}$. Since $\Gamma^1_{\rho,k}$ is smooth, the distance function $\mathrm{dist}(x,\Gamma^1_{\rho,k})$ is also smooth on $$\{x\in\overline{B^{\rho,k}}:\mathrm{dist}(x,\Gamma^1_{\rho,k})<3\delta\},$$
provided $\delta$ is sufficiently small (see \cite[Lemma 14.16]{GT1998}). Set $\zeta_\delta=\psi_\delta(\mathrm{dist}(x,\Gamma^1_{\rho,k}))$.} such that $\zeta_\delta\to\chi_{B^{\rho,k}}$ in the $L^1$ sense and $-\nabla\zeta_\delta\to\nu\mathcal{H}^{N-1}\llcorner\partial B^{\rho,k}$ in the sense of measures, as $\delta\to 0$, where $\nu$ is the outer normal to $\partial B^{\rho,k}$. 

Now, for $m\in\{1,\cdots,N\}$, using $\partial_m(\varphi\zeta_\delta)$ as the test function in the weak formulation of \eqref{pre-eq:appr-e-k-cone}, we get
$$\sum_{i=1}^N\left(\int_{B^{\rho,k}}\zeta_\delta\partial_ma^\epsilon_i(\nabla u^\epsilon_k)\partial_i\varphi\,dx+\int_{B^{\rho,k}}\varphi\partial_ma^\epsilon_i(\nabla u^\epsilon_k)\partial_i\zeta_\delta\,dx\right)=0,$$
where we used the notation $a^\epsilon=(a^\epsilon_1,\cdots,a^\epsilon_N)$ to denote the components of the vector field $a^\epsilon$. Thus, letting $\delta\to 0$ in the above equality yields
\begin{equation}\label{pre-eq:CFR}
\sum_{i=1}^N\left(\int_{B^{\rho,k}}\partial_ma^\epsilon_i(\nabla u^\epsilon_k)\partial_i\varphi\,dx-\int_{\Gamma^1_{\rho,k}}\varphi\partial_ma^\epsilon_i(\nabla u^\epsilon_k)\nu_i\,d\mathcal{H}^{N-1}\right)=0.
\end{equation}
By density, in \eqref{pre-eq:CFR} we actually can choose any $\varphi\in W^{1,2}(B^{\rho,k})$ with $\varphi=0$ on $\Gamma_{\rho,k}^0$. In particular, let us take $\varphi=a^\epsilon_m(\nabla u^\epsilon_k)\psi^2$, where $\psi\in C_0^\infty(B_\rho(y))$. For this choice, using the fact that $\partial\Sigma_k$ is convex and arguing as in the proof of \cite[Proposition 2.8]{CFR2020} (see Formulas (2.45)--(2.50) there), we deduce that
\begin{equation*}
\sum_{i,m=1}^N\int_{B^{\rho,k}}\partial_ma^\epsilon_i(\nabla u^\epsilon_k)\partial_i\left(a^\epsilon_m(\nabla u^\epsilon_k)\psi^2\right)dx\leq0.
\end{equation*}
Furthermore, we can argue as in the proof of \cite[Theorem 4.2]{Avelin2018} to obtain the following Caccioppoli type estimate
\begin{equation}\label{pre-eq:Cacci}
\int_{B^{\rho,k}}|\nabla(a^\epsilon(\nabla u^\epsilon_k))|^2\psi^2\,dx\leq C\int_{B^{\rho,k}}|a^\epsilon(\nabla u^\epsilon_k)|^2|\nabla\psi|^2\,dx
\end{equation}
for some constant $C=C(N,\lambda)$.

 Pick a $\psi\in C_0^\infty(B_{\frac{\rho}{5}}(y))$ such that $\psi=1$ in $B_{\frac{\rho}{6}}(y)$ and $|\nabla\psi|\leq\frac{C}{\rho}$ for some constant $C=C(N)$. It follows from \eqref{pre-eq:Cacci} that
\begin{equation}\label{pre-eq:Cacci-psi}
\|a^\epsilon(\nabla u^\epsilon_k)\|^2_{W^{1,2}(B^{\frac{\rho}{6},k})}\leq\frac{C(N,\lambda)}{\rho^2}\int_{B^{\frac{\rho}{5},k}}|a^\epsilon(\nabla u^\epsilon_k)|^2\,dx.
\end{equation}
In order to estimate the right-hand side, we fix a small $\tau\in(0,\frac{\rho}{20})$ and assume that for some $y_k\in\Gamma^1_{\rho,k}$, $|y-y_k|=\mathrm{dist}(y,\Gamma^1_{\rho,k})<\tau$ when $k$ is sufficiently large. It is seen that for each large $k$,
$$B_{\frac{\rho}{5}}(y)\subset B_{\frac{\rho}{4}}(y_k)\subset B_{\frac{\rho}{2}}(y_k)\subset\subset B_{\rho}(y).$$
Thus, by virtue of a local flattening argument for $\Gamma^1_{\rho,k}$ and a $L^\infty$ estimate for $\nabla u^\epsilon_k$ near the boundary (see Formula (4.4) in \cite[Proof of Lemma 6]{Lieberman1988}), it is not difficult to deduce that 
\begin{equation}\label{pre-eq:bound-Gradient}
\|\nabla u^\epsilon_k\|_{L^\infty\left(B^{\frac{\rho}{5},k}\right)}\leq\|\nabla u^\epsilon_k\|_{L^\infty\left(B_{\frac{\rho}{4}}(y_k)\cap\Sigma_k\right)}\leq C
\end{equation}
for some constant $C=C(N,\lambda,\|\nabla u^\epsilon_k\|_{L^N(B^{\rho,k})},\rho,L_{B^{\rho,k}})$. Consequently, combining \eqref{pre-eq:Cacci-psi} with the relation $|a^\epsilon(\xi)|\leq\lambda(|\xi|+\epsilon)^{N-1}$ and with \eqref{pre-eq:bound-Gradient} gives
\begin{equation}\label{pre-eq:Cacci-e}
\|a^\epsilon(\nabla u^\epsilon_k)\|_{W^{1,2}(B^{\frac{\rho}{6},k})}\leq C(N,\lambda,\|\nabla u^\epsilon_k\|_{L^N(B^{\rho,k})},\rho,L_{B^{\rho,k}}).
\end{equation}
Moreover, choosing $u^\epsilon_k-\tilde{u}$ as the test function in the weak formulation of \eqref{pre-eq:appr-e-k-cone}, it follows from \eqref{pre-eq:monoto-e} that  $$\|\nabla u^\epsilon_k\|_{L^N(B^{\rho,k})}\leq C\|\nabla\tilde{u}\|_{L^N(B^{\rho,k})}$$ for some constant $C=C(N,\lambda,\mathcal{H}^{N}(B^{\rho,k}))$. In view of \eqref{pre-eq:Cacci-e}, we hence arrive at
\begin{equation}\label{pre-eq:Cacci-uniform}
\|a^\epsilon(\nabla u^\epsilon_k)\|_{W^{1,2}(B^{\frac{\rho}{6},k})}\leq C(N,\lambda,\|\nabla\tilde{u}\|_{L^N(B^{\rho,k})},\rho,L_{B^{\rho,k}}).
\end{equation}

At this point, to obtain an estimate for $\|a(\nabla u_k)\|_{W^{1,2}(B^{\frac{\rho}{6},k})}$, we present certain convergence results and then let $\epsilon\to 0$ in \eqref{pre-eq:Cacci-uniform}. Exploiting condition \eqref{pre-eq:monoto-e} and arguing as the derivation of Formula (4.2) contained in \cite[Proof of Theorem 4.2]{Avelin2018}, we easily get $$\nabla u^\epsilon_k\to\nabla u_k\quad\text{in }L^N(B^{\rho,k}),\quad\text{as }\epsilon\to 0,$$ which implies $a^\epsilon(\nabla u^\epsilon_k)\to a(\nabla u_k)$ in $L^1(B^{\rho,k})$. Moreover, since the constant $C$ in \eqref{pre-eq:Cacci-uniform} is independent of $\epsilon$, we have 
\begin{equation*}
a^\epsilon(\nabla u^\epsilon_k)\rightharpoonup a(\nabla u_k)\quad\text{in }W^{1,2}(B^{\frac{\rho}{6},k}),
\end{equation*}
for a subsequence of ${\epsilon}\to 0$.
Therefore, the lower semicontinuity
for weak convergence leads to 
\begin{equation}
\label{pre-eq:Cacci-uniform-u-k}
\|a(\nabla u_k)\|_{W^{1,2}(B^{\frac{\rho}{6},k})}\leq C(N,\lambda,\|\nabla\tilde{u}\|_{L^N(B^{\rho,k})},\rho,L_{B^{\rho,k}}).
\end{equation}

Finally, with \eqref{pre-eq:Cacci-uniform-u-k} in hand, let us prove $a(\nabla u)\in W^{1,2}(B^{\frac{\rho}{6}})$. In view of \eqref{pre-eq:Lipschitz-Ome} and the fact that $\|\nabla\tilde{u}\|_{L^N(B^{\rho,k})}\leq C$ for some $C$ not depending on $k$, we observe that the constant $C$ in \eqref{pre-eq:Cacci-uniform-u-k} is actually independent of $k$. Hence, $a(\nabla u_k)$ is uniformly bounded in $W^{1,2}(B^{\frac{\rho}{6}})$ and there exists a function $U\in W^{1,2}(B^{\frac{\rho}{6}})$ such that, up to a subsequence of $k\to\infty$,
\begin{equation}\label{pre-eq:W-app-u-k}
a(\nabla u_k)\to U\text{ in }L^2(B^{\frac{\rho}{6}})\quad\text{and}\quad a(\nabla u_k)\rightharpoonup U\text{ in }W^{1,2}(B^{\frac{\rho}{6}}).
\end{equation}
Via \eqref{pre-eq:W-app-u-k} and \eqref{pre-eq:C1-app-u-k}, we infer that
$$U=a(\nabla u)\in W^{1,2}(B^{\frac{\rho}{6}}).$$

This completes the proof.
\end{proof}

\section{Asymptotic expansion and the solvability}\label{sec:well-posedness}

This section is devoted to the study of problem \eqref{eq:log-cone}--\eqref{eq:asym-log-cone}. We first show that the prescribed logarithmic behavior \eqref{eq:asym-log-cone} can be improved to a precise asymptotic expansion near infinity, by using scaling arguments and comparison principle together with a Liouville-type result (Lemma \ref{cap-lem:Liouville} below). Thanks to such improved asymptotics, we prove the existence and uniqueness of weak solutions to problem \eqref{eq:log-cone}--\eqref{eq:asym-log-cone}.

\begin{proposition}\label{cap-pro:asym-equi}
Let $\Sigma$, $H$ and $H_0$ be as in Theorem \ref{thm:over-log-cone} and let $\Omega\subset\mathbb{R}^N$ be a bounded domain. Let $u$ be a weak solution to problem \eqref{eq:log-cone}--\eqref{eq:asym-log-cone}. Then there exist $\gamma>0$ and $\beta\in\mathbb{R}$ such that
\begin{equation}\label{cap-eq:asym-equi}
\lim_{H_0(x)\to\infty}\left(u(x)-\gamma\ln H_0(x)\right)=\beta,
\end{equation}
and 
\begin{equation}\label{cap-eq:asym-D-equi}
\lim_{H_0(x)\to\infty} H_0(x)H(\nabla (u-\gamma\ln H_0(x)))=0.
\end{equation}
\end{proposition}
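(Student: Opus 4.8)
The plan is to upgrade the two-sided logarithmic bound \eqref{eq:asym-log-cone} to the sharp asymptotics \eqref{cap-eq:asym-equi}--\eqref{cap-eq:asym-D-equi} by a scaling/blow-down argument combined with the comparison principle of Lemma \ref{pre-lem:WCP} and a Liouville-type rigidity statement (the cited Lemma \ref{cap-lem:Liouville}). First I would fix a large $R_0$ so that $\overline{\Omega}\subset B_{R_0}^{H_0}$, and for $t>1$ introduce the rescaled functions $u_t(x):=\frac{1}{\ln t}\,u(tx)$ on the dilated domain $\Sigma\cap(\overline{\Omega}/t)^c$. By homogeneity of $\Delta_N^H$ (it is invariant under $u\mapsto \lambda u(tx)$ up to scaling, since $a$ is $(N-1)$-homogeneous), each $u_t$ solves the same equation $\Delta_N^H u_t=0$ with the Neumann condition on $\partial\Sigma$, and \eqref{eq:asym-log-cone} together with $\ln H_0(tx)=\ln t+\ln H_0(x)$ gives the uniform bound $\frac1d\le u_t(x)/\ln H_0(x)\le d$ for $H_0(x)$ bounded away from $0$ and $t$ large. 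The interior and boundary $C^{1,\gamma}$ estimates of \cite{DiBen1983,Tolksdorf1984,Lieberman1988} then provide local compactness, so along a sequence $t_j\to\infty$ we get $u_{t_j}\to v$ in $C^1_{loc}(\overline{\Sigma}\setminus\{0\})$, where $v$ is a nonnegative weak solution of $\Delta_N^H v=0$ in $\Sigma\setminus\{0\}$, satisfies $\langle a(\nabla v),\nu\rangle=0$ on $\partial\Sigma\setminus\{0\}$, has the bound $\frac1d\le v/\ln H_0\le d$ away from the origin, and is $N$-harmonic with a possible singularity at $0$.

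Next I would invoke the Liouville-type classification (Lemma \ref{cap-lem:Liouville}) to conclude that every such $v$ must be of the form $v(x)=\gamma\ln H_0(x)$ for a single constant $\gamma\in[1/d,d]$; the homogeneous Neumann condition on $\partial\Sigma$ is exactly what forces the profile to be radial in $H_0$ rather than to pick up an angular dependence, and the logarithmic growth pins down the exponent. The key point is that $\gamma$ is independent of the chosen sequence $t_j$: this follows by a monotonicity/ordering argument using Lemma \ref{pre-lem:WCP}. Concretely, comparing $u$ on an annular region $\Sigma\cap(B_R^{H_0}\setminus B_{R_0}^{H_0})$ with the explicit barriers $\gamma_\pm\ln H_0(x)+c_\pm$ built from \eqref{intro-eq:formula-u}, the weak comparison principle shows that $\limsup$ and $\liminf$ of $u(x)/\ln H_0(x)$ coincide, forcing a unique $\gamma$; hence the full limit $\lim_{H_0(x)\to\infty} u(x)/\ln H_0(x)=\gamma$ exists.

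Having identified $\gamma$, I would set $w(x):=u(x)-\gamma\ln H_0(x)$, which is a (difference of) solution bounded now by $o(\ln H_0(x))$, and run the scaling argument a second time — or, more efficiently, a Harnack-type oscillation-decay estimate for $w$ on dyadic annuli — to show that the oscillation of $w$ on $\{H_0(x)\sim 2^m\}$ tends to $0$, which yields convergence of $w$ to a constant $\beta$, establishing \eqref{cap-eq:asym-equi}. The gradient statement \eqref{cap-eq:asym-D-equi} then comes for free: since $u_{t_j}\to\gamma\ln H_0$ in $C^1_{loc}(\overline{\Sigma}\setminus\{0\})$ and by homogeneity $\nabla u_t(x)=\frac{t}{\ln t}\nabla u(tx)$, the $C^1$-convergence on the unit sphere $\{H_0=1\}$ translates, after undoing the scaling and using $\nabla(\ln H_0)(tx)=\frac1t\nabla\ln H_0(x)$ together with the $1$-homogeneity of $H$, precisely into $H_0(x)\,H(\nabla(u-\gamma\ln H_0)(x))\to 0$. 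I expect the main obstacle to be the two-step nature of the argument: extracting a \emph{unique} $\gamma$ (rather than a subsequential one) and then proving the \emph{second-order} decay of $w$ to a constant, both of which require the comparison principle on annular regions inside the cone where one must be careful that the competitor barriers satisfy the mixed boundary condition — this is where the homogeneous Neumann datum on $\Gamma_1$ and the $C^1$-regularity up to $\Gamma_1$ (Proposition \ref{pre-pro:second-order} and \cite{Lieberman1988}) are genuinely used, in the absence of a Kelvin transform or Hopf lemma.
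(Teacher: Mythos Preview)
Your overall strategy---scaling blow-down, Liouville rigidity (Lemma~\ref{cap-lem:Liouville}), comparison principle (Lemma~\ref{pre-lem:WCP}), and $C^1$-compactness for the gradient statement---is exactly the paper's approach. But there is a concrete error in how you describe the first blow-up, and it propagates.

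With $u_t(x)=u(tx)/\ln t$ one has, from \eqref{eq:asym-log-cone} and $\ln H_0(tx)=\ln t+\ln H_0(x)$,
\[
\tfrac1d\Bigl(1+\tfrac{\ln H_0(x)}{\ln t}\Bigr)\le u_t(x)\le d\Bigl(1+\tfrac{\ln H_0(x)}{\ln t}\Bigr),
\]
so on compact subsets of $\overline{\Sigma}\setminus\{0\}$ the family $u_t$ is \emph{bounded}, not trapped between $\tfrac1d\ln H_0$ and $d\ln H_0$ as you wrote. Hence any subsequential limit $v$ is a \emph{bounded} weak solution in $\Sigma$ with the Neumann condition on $\partial\Sigma\setminus\{0\}$, and Lemma~\ref{cap-lem:Liouville} (applied with $\gamma=0$) gives that $v$ is a \emph{constant}, not $\gamma\ln H_0$. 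The paper then shows this constant equals $\gamma:=\limsup u/\ln H_0$ via comparison with $(\gamma\pm\varepsilon)\ln H_0$ on annuli, upgrading the $\limsup$ to a full limit, and uses further barriers to get $u-\gamma\ln H_0\in L^\infty$. Only after this does a \emph{second} blow-down $u_m(y):=u(my)-\gamma\ln m$ come in; its limits are of the form $\gamma\ln H_0+G_\infty$ with $G_\infty\in L^\infty$, and now Lemma~\ref{cap-lem:Liouville} (with the identified $\gamma$) forces $G_\infty\equiv\beta$. The gradient decay \eqref{cap-eq:asym-D-equi} comes from the $C^1_{loc}$-convergence of this second family (via $\nabla G_m(y)=m\nabla G(my)$), not the first; with your mis-identified first limit the gradient computation would in fact diverge like $\ln t$.

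Two smaller points. Your alternative ``Harnack-type oscillation decay for $w=u-\gamma\ln H_0$'' is not straightforward: $w$ is the difference of two solutions of a \emph{nonlinear} equation and does not itself satisfy a PDE to which Harnack applies; the paper avoids this by running the second blow-down and using Liouville again. Also, the compactness up to $\partial\Sigma$ in the paper is only $W^{1,N}_{loc}$ (with $C^{1,\alpha}_{loc}$ in the interior), since $\partial\Sigma$ need not be smooth; your claim of $C^1_{loc}(\overline\Sigma\setminus\{0\})$ convergence overstates what \cite{Lieberman1988} gives here, though it does not affect the argument because the gradient conclusion is read off on $\{H_0=1\}\subset\Sigma$.
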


\begin{proof}
Let $R_0$ be such that $\Omega\subset B_{R_0}^{H_0}:=B_{R_0}^{H_0}(0)$ and set $$\gamma:=\limsup_{H_0(x)\to\infty}\frac{u(x)}{\ln H_0(x)}.$$ We first show 
\begin{equation}\label{cap-eq:prop-bounded}
u(x)-\gamma\ln H_0(x)\in L^\infty(\Sigma\setminus B_{R_0}^{H_0}).
\end{equation}

Let 
$$u_R(x)=\frac{u(Rx)}{\ln R},\quad\text{for }x\in\Sigma\setminus B^{H_0}_{\frac{R_0}{R}}.$$
Then 
\begin{equation*}
\left\{
\begin{array}{ll}
\Delta_N^H u_R=0 &\text{in }\Sigma\setminus B^{H_0}_{\frac{R_0}{R}},\\
\left<a(\nabla u_R),\nu\right>=0 &\text{on }\partial\Sigma\setminus B^{H_0}_{\frac{R_0}{R}}.
\end{array}
\right.
\end{equation*}
From the condition \eqref{eq:asym-log-cone} and by using $1$-homogeneity of $H_0$, we have $$|u_R(x)|\leq d\left(1+\frac{|\ln H_0(x)|}{\ln R}\right),$$
and in particular $u_R(x)$ is bounded in every compact subset of $\overline{\Sigma}\setminus\{0\}$, uniformly in $R$. From regularity theory for quasilinear PDEs \cite{DiBen1983,Serrin1964,Tolksdorf1984}, we deduce that $u_R(x)$ is uniformly bounded in $C^{1,\alpha}_{loc}(\Sigma)\cap W_{loc}^{1,N}(\overline{\Sigma}\setminus\{0\})$\footnote{Here and in the following, to avoid a misunderstanding, we  point out that when $\Sigma=\mathbb{R}^N$ the regularity of $C_{loc}^{1,\alpha}(\Sigma)$ needs to be replaced by $C_{loc}^{1,\alpha}(\mathbb{R}^N\setminus\{0\})$.} with respect to $R$. Hence, by Arzela--Ascoli theorem and a diagonal process, $u_{R_j}\to v$ in $C^1_{loc}(\Sigma)$ and $u_{R_j}\rightharpoonup v$ in $W_{loc}^{1,N}(\overline{\Sigma}\setminus\{0\})$ along a sequence $R_j\to\infty$, where $v\in W_{loc}^{1,N}(\overline{\Sigma}\setminus\{0\})\cap L^\infty(\Sigma)$ satisfies 
\begin{equation*}
\left\{
\begin{array}{ll}
\Delta_N^Hv=0 &\text{in }\Sigma,\\
\left<a(\nabla v),\nu\right>=0 &\text{on }\partial\Sigma\setminus\{0\}.
\end{array}
\right.
\end{equation*}
By Lemma \ref{cap-lem:Liouville} below, $v$ is constant.

We claim that $v=\gamma$ by proving that
\begin{equation}\label{cap-eq:lim-gamma}
\lim_{H_0(x)\to\infty}\frac{u(x)}{\ln H_0(x)}=v.
\end{equation}
Indeed, for $\epsilon>0$, there exists $j(\epsilon)\in\mathbb{N}$ such that $$(v-\epsilon)\ln H_0(R_jx)\leq u(R_jx)\leq (v+\epsilon)\ln H_0(R_jx)$$ for $j\geq j(\epsilon)$ and $x\in\partial B^{H_0}_1$. Since $\left<a(\nabla\ln H_0(x)),\nu\right>=\left<x,\nu\right>=0$ a.e. on $\partial\Sigma$ by formula \eqref{pre-eq:pro-H-con-2}, by applying Lemma \ref{pre-lem:WCP} to $u(z)$ and $\ln H_0(z)$ we thus obtain
$$(v-\epsilon)\ln H_0(z)\leq u(z)\leq (v+\epsilon)\ln H_0(z)$$
for any $z\in\Sigma$ such that $H_0(z)\geq R_{j(\epsilon)}$. This implies \eqref{cap-eq:lim-gamma} and hence $v=\gamma$.

For $\epsilon>0$, let 
\begin{gather*}
\bar{u}_\epsilon(x)=(\gamma+\epsilon)\ln H_0(x)-(\gamma+\epsilon)\ln R_0+\sup_{\partial B^{H_0}_{R_0}}u,\\
\underline{u}_\epsilon(x)=(\gamma-\epsilon)\ln H_0(x)-(\gamma-\epsilon)\ln R_0+\inf_{\partial B^{H_0}_{R_0}}u.
\end{gather*}
Then, $$\underline{u}_\epsilon\leq u\leq\bar{u}_\epsilon\quad\text{on }\partial B^{H_0}_{R_0}\text{ and also for }H_0(x)\text{ large enough}.$$ 
Consequently, by Lemma \ref{pre-lem:WCP} again and letting $\epsilon\to 0$, we deduce that $$\inf_{\partial B^{H_0}_{R_0}}u-\gamma\ln R_0\leq u-\gamma\ln H_0(x)\leq\sup_{\partial B^{H_0}_{R_0}}u-\gamma\ln R_0$$
in $\Sigma\setminus B^{H_0}_{R_0}$, which implies \eqref{cap-eq:prop-bounded}.

Now, we prove the asymptotic behaviors at infinity of $u$ and $\nabla u$.

For $m>0$, we introduce the function $$u_m(y):=u(my)-\gamma\ln m.$$
By setting $G(x):=u(x)-\gamma\ln H_0(x)$, we also have that $$u_m(y)=\gamma\ln H_0(y)+G(my).$$ Since $G(x)\in L^\infty(\Sigma\setminus B_{R_0}^{H_0})$, $u_m(y)$ is bounded in every compact subset of $\overline{\Sigma}\setminus\{0\}$, uniformly in $m$. Similarly as done for $u_R$ above, we have that $u_m(y)$ is uniformly bounded in $C^{1,\alpha}_{loc}(\Sigma)\cap W_{loc}^{1,N}(\overline{\Sigma}\setminus\{0\})$ with respect to $m$. Consequently, there is a sequence $m_j\to\infty$ such that $$u_{m_j}\to u_\infty\text{ in } C^1_{loc}(\Sigma)\text{ and }u_{m_j}\rightharpoonup u_\infty \text{ in } W_{loc}^{1,N}(\overline{\Sigma}\setminus\{0\})$$ where $u_\infty$ satisfies 
\begin{equation*}
\left\{
\begin{array}{ll}
\Delta_N^Hu_\infty=0 &\text{in }\Sigma,\\
\left<a(\nabla u_\infty),\nu\right>=0 &\text{on }\partial\Sigma\setminus\{0\}.
\end{array}
\right.
\end{equation*}
 If we set $$G_\infty(y):=u_\infty(y)-\gamma\ln H_0(y),$$ then $G_\infty(y)\in L^\infty(\Sigma)$. By applying Lemma \ref{cap-lem:Liouville}, we thus infer that $$G_\infty\equiv\beta$$ for some constant $\beta\in\mathbb{R}$. This implies that $$\lim_{m_j\to\infty}\left(u(m_jy)-\gamma\ln H_0(m_jy)\right)=\beta$$ in the $C^1_{loc}(\Sigma)$ topology. Via Lemma \ref{pre-lem:WCP}, we get \eqref{cap-eq:asym-equi}.  

Moreover, setting $G_m(y):=G(my)$, we have
\begin{equation}\label{cap-eq:D-G}
\sup_{H_0(x)=m}H_0(x)\left|\nabla\left(u(x)-\gamma\ln H_0(x)\right)\right|=\sup_{H_0(y)=1}|\nabla G_m(y)|.
\end{equation}
Since $G_{m_j}\to G_\infty$ in $C^1_{loc}(\Sigma)$, then
\begin{equation}\label{cap-eq:asym-D-G}
\sup_{H_0(y)=1}|\nabla G_{m_j}(y)|\to \sup_{H_0(y)=1}|\nabla G_\infty(y)|=0.
\end{equation}
It follows from \eqref{cap-eq:D-G}--\eqref{cap-eq:asym-D-G} that 
$$\sup_{H_0(x)=m}H_0(x)\left|\nabla\left(u(x)-\gamma\ln H_0(x)\right)\right|\to 0$$
holds for any sequence $m\to\infty$ up to extracting a subsequence. This implies
the validity of \eqref{cap-eq:asym-D-equi}, thus completing the proof.
\end{proof}

In the proof of Proposition \ref{cap-pro:asym-equi}, we have used the following rigidity result of Liouville-type. 
\begin{lemma}\label{cap-lem:Liouville}
Let $\Sigma$, $H$ and $H_0$ be as in Theorem \ref{thm:over-log-cone}. Let $\gamma\in\mathbb{R}$ be a constant. Assume that $G(x)\in W_{loc}^{1,N}(\overline{\Sigma}\setminus\{0\})\cap L^\infty({\Sigma})$ and the function $\gamma\ln H_0(x)+G(x)$ satisfies 
\begin{equation*}
\left\{
\begin{array}{ll}
\Delta_N^H\left(\gamma\ln H_0(x)+G(x)\right)=0 &\text{in }\Sigma,\\
\left<a\left(\nabla (\gamma\ln H_0(x)+G(x))\right),\nu\right>=0 &\text{on }\partial\Sigma\setminus\{0\}.
\end{array}
\right.
\end{equation*}
Then $G(x)$ is a constant function.
\end{lemma}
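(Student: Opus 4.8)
The plan is to show $\nabla G\equiv 0$ by a Caccioppoli-type energy estimate combined with logarithmic cut-off functions, exploiting that $N$ is precisely the critical exponent for this Liouville phenomenon and that the unbounded part $\gamma\ln H_0$ of the solution can be removed by subtraction. The first point is that $\gamma\ln H_0$ is itself a solution of the same boundary value problem away from the vertex: by Lemmas~\ref{pre-lem:pro-H} and~\ref{pre-lem:pro-H-con} one has $a(\nabla\ln H_0(x))=x\,H_0(x)^{-N}$ for $x\neq 0$, so $\mathrm{div}\,a(\nabla\ln H_0)=0$ in $\mathbb{R}^N\setminus\{0\}$, and since $\langle x,\nu\rangle=0$ on $\partial\Sigma$ (because $\Sigma$ is a cone) one also has $\langle a(\nabla\ln H_0),\nu\rangle=0$ on $\partial\Sigma\setminus\{0\}$. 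Subtracting the weak formulations satisfied by $w:=\gamma\ln H_0+G$ and by $\gamma\ln H_0$ then gives
\[\int_\Sigma\langle a(\nabla w)-a(\nabla(\gamma\ln H_0)),\nabla\varphi\rangle\,dx=0\]
for every $\varphi\in W^{1,N}(\Sigma)$ with support compact in $\overline{\Sigma}\setminus\{0\}$; no boundary term on $\partial\Sigma$ survives, thanks to the homogeneous Neumann conditions, and the support must avoid the vertex since neither $\ln H_0$ nor $G$ need lie in $W^{1,N}$ there.

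Next I would test this identity with $\varphi=G\,\eta^N$, where for $0<r<1<R$ the cut-off $\eta=\eta_{r,R}$ is Lipschitz with $0\le\eta\le1$, equals $1$ on $B_R^{H_0}\setminus B_r^{H_0}$, vanishes on $B_{r^2}^{H_0}$ and outside $B_{R^2}^{H_0}$, and interpolates log-linearly in $H_0$, so that $|\nabla\eta|\le C|x|^{-1}|\ln R|^{-1}$ on $B_{R^2}^{H_0}\setminus B_R^{H_0}$ and $|\nabla\eta|\le C|x|^{-1}|\ln r|^{-1}$ on $B_r^{H_0}\setminus B_{r^2}^{H_0}$; this $\varphi$ is admissible because $G\in W^{1,N}_{loc}(\overline{\Sigma}\setminus\{0\})$ and $\eta$ has compact support away from the vertex. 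Writing $\nabla G=\nabla w-\nabla(\gamma\ln H_0)$, using the strict monotonicity~\eqref{pre-eq:mo-H-1} on the resulting quadratic term and the Lipschitz bound~\eqref{pre-eq:mo-H-2} together with Young's inequality on the cross term (absorbing part of it into the left-hand side), I expect to obtain
\[\int_\Sigma\eta^N(|\nabla w|+|\nabla(\gamma\ln H_0)|)^{N-2}|\nabla G|^2\,dx\le C\|G\|_{L^\infty(\Sigma)}^2\int_{\mathrm{supp}\,\nabla\eta}\eta^{N-2}(|\nabla w|+|\nabla(\gamma\ln H_0)|)^{N-2}|\nabla\eta|^2\,dx.\]

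To make the right-hand side tend to $0$ as $r\to0$ and $R\to\infty$, note first that $|\nabla(\gamma\ln H_0)|\le C|\gamma|\,|x|^{-1}$, so on $\mathrm{supp}\,\nabla\eta$ one has $(|\nabla w|+|\nabla(\gamma\ln H_0)|)^{N-2}\le C(|\nabla w|^{N-2}+|x|^{-(N-2)})$. The part with $|x|^{-(N-2)}|\nabla\eta|^2$ integrates directly to $O((\ln R)^{-1}+|\ln r|^{-1})$. For the part with $|\nabla w|^{N-2}|\nabla\eta|^2$ I would apply Hölder's inequality with exponents $\tfrac{N}{N-2}$ and $\tfrac{N}{2}$, reducing it to $\bigl(\int_{\mathrm{supp}\,\nabla\eta}|\nabla w|^N\bigr)^{(N-2)/N}\bigl(\int|\nabla\eta|^N\bigr)^{2/N}$; here $\int|\nabla\eta|^N=O((\ln R)^{1-N}+|\ln r|^{1-N})$, while a standard Caccioppoli inequality for $w$ on each dyadic $H_0$-annulus $B_{2\rho}^{H_0}\setminus B_{\rho/2}^{H_0}$ — on which $w-\gamma\ln\rho$ is bounded by $M:=C(|\gamma|+\|G\|_{L^\infty(\Sigma)})$, because $\ln H_0$ has bounded oscillation there and $G$ is bounded — yields $\int_{B_{2\rho}^{H_0}\setminus B_{\rho/2}^{H_0}}|\nabla w|^N\le CM^N$, and summing over the $O(\ln R)$ (resp.\ $O(|\ln r|)$) annuli covering the outer (resp.\ inner) transition region gives $\int_{\mathrm{supp}\,\nabla\eta}|\nabla w|^N\le CM^N(\ln R+|\ln r|)$. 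Combining these, the right-hand side is $O((\ln R)^{-1}+|\ln r|^{-1})\to0$.

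By Fatou's lemma it then follows that $\int_\Sigma(|\nabla w|+|\nabla(\gamma\ln H_0)|)^{N-2}|\nabla G|^2\,dx=0$, hence $(|\nabla w|+|\nabla(\gamma\ln H_0)|)^{N-2}|\nabla G|^2=0$ a.e.\ in $\Sigma$. At a.e.\ point where $\nabla G\neq0$ the weight cannot vanish, for otherwise $\nabla w=0$ and $\nabla(\gamma\ln H_0)=0$, which would force $\nabla G=\nabla w-\nabla(\gamma\ln H_0)=0$; thus $\nabla G\equiv0$, and $G$ is constant since $\Sigma$ is connected. The step I expect to require the most care is the uniform annular Caccioppoli bound $\int_{B_{2\rho}^{H_0}\setminus B_{\rho/2}^{H_0}}|\nabla w|^N\le CM^N$ — that is, organizing the computation so that the unbounded potential $\gamma\ln H_0$ enters only through its bounded oscillation on dyadic annuli — together with the bookkeeping of the degenerate weights $(\,\cdot\,)^{N-2}$ through~\eqref{pre-eq:mo-H-1}--\eqref{pre-eq:mo-H-2}; by contrast, the mixed Neumann geometry of $\partial\Sigma$ intervenes only through $\langle x,\nu\rangle=0$ and the admissibility of compactly supported test functions, and requires no extra smoothness of $\partial\Sigma$.
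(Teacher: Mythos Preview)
Your proposal is correct and follows the same overall strategy as the paper: subtract the weak formulations for $w=\gamma\ln H_0+G$ and for $\gamma\ln H_0$, test with $\eta^N G$ for a logarithmic cut-off $\eta$, apply \eqref{pre-eq:mo-H-1}--\eqref{pre-eq:mo-H-2}, and show the right-hand side vanishes as the cut-off exhausts $\Sigma\setminus\{0\}$. The difference is in the bookkeeping. You keep the degenerate weight $(|\nabla w|+|\nabla(\gamma\ln H_0)|)^{N-2}$ throughout, apply Young's inequality with exponent~$2$, and are then forced to control $\int_{\mathrm{supp}\,\nabla\eta}|\nabla w|^N$ by a separate dyadic Caccioppoli argument --- precisely the step you flag as most delicate. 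The paper avoids this entirely: since $N\ge2$ one has $(|\xi_1|+|\xi_2|)^{N-2}|\xi_1-\xi_2|^2\ge|\xi_1-\xi_2|^N$, which gives $c_1\int_\Sigma\eta^N|\nabla G|^N$ directly on the left; and on the right the paper writes $|\nabla w|\le|\nabla G|+C|x|^{-1}$ so that the weight becomes $(|\nabla G|+|x|^{-1})^{N-2}$, after which Young's inequality with exponents $N$ and $\tfrac{N}{N-1}$ yields
\[
\int_\Sigma\eta^N|\nabla G|^N\,dx\;\le\;C\int_\Sigma\Bigl(|\nabla\eta|^N+\frac{|\nabla\eta|^{N/(N-1)}}{|x|^{N(N-2)/(N-1)}}\Bigr)\,dx,
\]
whose right-hand side depends only on $\eta$ and $|x|$ and tends to~$0$ by direct computation. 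This also makes the final step immediate ($\nabla G\equiv0$ without any weight-vanishing discussion). Your route works, but the simplification you are missing is to express everything in terms of $|\nabla G|$ rather than $|\nabla w|$; once you do that, the annular Caccioppoli estimate becomes unnecessary.
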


If $\Sigma=\mathbb{R}^N$, the homogeneous Neumann boundary condition above is trivially satisfied and this result has been shown in the Euclidean case (i.e. $H$ is the Euclidean norm), referring to \cite[Theorem 2.2]{K-Veron-1986} (see also \cite[Lemma 4.3]{Esposito2018} for an alternative proof). Since the argument in \cite{K-Veron-1986} relies on the Kelvin transform, which is, however, not helpful neither for cones nor for anisotropic equations, we shall prove Lemma \ref{cap-lem:Liouville} by adapting the one given in \cite{Esposito2018} to the conical-anisotropic setting, which is based on appropriate cut-off functions.

\begin{proof}[Proof of Lemma \ref{cap-lem:Liouville}]
For simplicity, we set $$A(x):=\gamma\ln H_0(x)+G(x).$$ It holds that 
\begin{equation}\label{cap-eq:test-A}
-\Delta_N^H A+\Delta_N^H(\gamma\ln H_0(x))=0\quad\text{in }\Sigma.
\end{equation}
Let $\eta$ be a cut-off with compact support in $\mathbb{R}^N\setminus\{0\}$. Testing \eqref{cap-eq:test-A} with $\eta^NG$ yields
\begin{align*}
I&:=\int_{\Sigma}\eta^N\left<a(\nabla A)-a(\nabla(\gamma\ln H_0(x))),\nabla G\right>dx\\
&=-N\int_{\Sigma}\eta^{N-1}G\left<a(\nabla A)-a(\nabla(\gamma\ln H_0(x))),\nabla\eta\right>dx:=II,
\end{align*}
where we used the fact that $$\left<a(\nabla A),\nu\right>=\left<a(\nabla(\gamma\ln H_0(x))),\nu\right>=0,\quad\text{on }\partial\Sigma\setminus\{0\}.$$
By \eqref{pre-eq:mo-H-1} and \eqref{pre-eq:mo-H-2}, we get that
\begin{equation}\label{cap-eq:lower-I}
I\geq c_1\int_\Sigma \eta^N|\nabla G|^N\,dx,
\end{equation}
and 
\begin{align}
II&\leq CN\|G\|_{L^\infty}\int_\Sigma\eta^{N-1}\left(|\nabla G|^{N-1}+\frac{|\nabla G|}{|x|^{N-2}}\right)|\nabla\eta|\,dx\notag\\
&\leq\frac{c_1}{2}\int_\Sigma\eta^N|\nabla G|^N\,dx+C\int_\Sigma\left(|\nabla\eta|^N+\frac{|\nabla\eta|^{\frac{N}{N-1}}}{|x|^{\frac{N(N-2)}{N-1}}}\right)dx.\label{cap-eq:upper-II}
\end{align}
By combining \eqref{cap-eq:lower-I} and \eqref{cap-eq:upper-II}, we thus obtain 
\begin{equation}\label{cap-eq:I+II}
\int_\Sigma\eta^N|\nabla G|^N\,dx\leq C\int_\Sigma\left(|\nabla\eta|^N+\frac{|\nabla\eta|^{\frac{N}{N-1}}}{|x|^{\frac{N(N-2)}{N-1}}}\right)dx.
\end{equation}

For $0<\delta<1$, we now choose $\eta$ as
\begin{equation*}
\eta(x)=\left\{
\begin{array}{ll}
0 &\text{if }|x|\leq \delta^2,\\
-\frac{\ln|x|-2\ln\delta}{\ln\delta} &\text{if }\delta^2\leq|x|\leq\delta,\\
1 &\text{if }\delta\leq|x|\leq\frac{1}{\delta},\\
\frac{\ln|x|+2\ln\delta}{\ln\delta} &\text{if }\frac{1}{\delta}\leq|x|\leq\frac{1}{\delta^2},\\
0 &\text{if }|x|\geq\frac{1}{\delta^2}.
\end{array}
\right.
\end{equation*}
After a direct computation, one has that
$$\int_{\mathbb{R}^N}|\nabla\eta|^N\,dx=\frac{C_N}{|\ln\delta|^{N-1}}\to 0$$
and $$\int_{\mathbb{R}^N}\frac{|\nabla\eta|^{\frac{N}{N-1}}}{|x|^{\frac{N(N-2)}{N-1}}}\,dx=\frac{C_N}{|\ln\delta|^{\frac{1}{N-1}}}\to 0$$
as $\delta\to0$, where $C_N:=\mathcal{H}^{N-1}(\mathbb{S}^{N-1})$. Consequently, letting $\delta\to0$ in \eqref{cap-eq:I+II}, we deduce that $$\int_\Sigma |\nabla G|^N\,dx=0.$$ This implies that $G$ is constant.
\end{proof}

Now let us investigate the solvability of problem \eqref{eq:log-cone}--\eqref{eq:asym-log-cone}. We establish the following existence and uniqueness theorem for weak solutions.

\begin{theorem}\label{cap-thm:F-log}
Let $\Sigma$, $\Omega$, $H$, $H_0$ and $x_0$ be the same as in Theorem \ref{thm:over-log-cone}. Assume $x_0\in\Omega$. Problem \eqref{eq:log-cone}--\eqref{eq:asym-log-cone} admits a unique weak solution $u$ fulfilling
\begin{equation}\label{cap-eq:thm-asym}
\lim_{H_0(x)\to\infty}\left(u(x)-\ln H_0(x)\right)=\beta
\end{equation}
for some constant $\beta\in\mathbb{R}$.
\end{theorem}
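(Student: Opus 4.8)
\emph{The plan.} I would establish existence by an exhaustion argument together with explicit barriers, and uniqueness from the weak comparison principle (Lemma~\ref{pre-lem:WCP}) combined with the scaling invariance of $\Delta_N^H$. Both parts rely on comparison functions built from $\ln H_0(\cdot-x_0)$, which are available precisely because $x_0\in\Omega$ and $x_0\in\mathbb{R}^k\times\{0\}$: by Lemma~\ref{pre-lem:pro-H-con}, $H_0\in C^2(\mathbb{R}^N\setminus\{0\})$, and Lemmas~\ref{pre-lem:pro-H}--\ref{pre-lem:pro-H-con} yield
$$
a\big(\nabla\ln H_0(x-x_0)\big)=\frac{x-x_0}{H_0(x-x_0)^N},
$$
so $\ln H_0(\cdot-x_0)$ is $N$-harmonic in $\mathbb{R}^N\setminus\{x_0\}$ and, since $\langle x,\nu\rangle=0$ on $\partial\Sigma$ (because $\Sigma$ is a cone) and $\langle x_0,\nu\rangle=0$ (because $x_0\in\mathbb{R}^k\times\{0\}$ while $\nu\in\{0\}\times\mathbb{R}^{N-k}$), it satisfies $\langle a(\nabla\ln H_0(\cdot-x_0)),\nu\rangle=0$ on $\partial\Sigma\setminus\{x_0\}$.

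\emph{Existence.} Fix $R_1\in(0,1)$ and $R_2>1$ with $\overline{B_{R_1}^{H_0}(x_0)}\subset\Omega\subset B_{R_2}^{H_0}(x_0)$, and for $n>R_2$ set $\Omega_n:=\Sigma\cap\overline{\Omega}^c\cap B_n^{H_0}(x_0)$. Let $u_n$ be the weak solution on $\Omega_n$ of the mixed boundary value problem with $u_n=0$ on $\Gamma_0$, $u_n=\ln n$ on $\Sigma\cap\partial B_n^{H_0}(x_0)$ and the homogeneous conormal condition on $\partial\Sigma\cap\Omega_n$; this $u_n$ exists as the unique minimizer of $w\mapsto\frac1N\int_{\Omega_n}H^N(\nabla w)\,dx$ among $w\in W^{1,N}(\Omega_n)$ with that Dirichlet data. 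The functions $\underline w(x):=\ln\!\big(H_0(x-x_0)/R_2\big)$ and $\overline w(x):=\ln\!\big(H_0(x-x_0)/R_1\big)$ are $N$-harmonic in $\Sigma\cap\overline{\Omega}^c$, satisfy the conormal condition on $\partial\Sigma$, and obey $\underline w\le0\le\overline w$ on $\Gamma_0$ (since $R_1<H_0(x-x_0)\le R_2$ there) and $\underline w\le\ln n\le\overline w$ on $\Sigma\cap\partial B_n^{H_0}(x_0)$; hence Lemma~\ref{pre-lem:WCP} gives
$$
\ln\frac{H_0(x-x_0)}{R_2}\le u_n(x)\le\ln\frac{H_0(x-x_0)}{R_1}\qquad\text{in }\Omega_n .
$$
Thus $\{u_n\}$ is locally uniformly bounded, so by interior regularity~\cite{DiBen1983,Serrin1964,Tolksdorf1984} and boundary regularity~\cite{Lieberman1988} it is bounded in $C^{1,\alpha}_{loc}$ up to $\Gamma_0$ and up to the $C^{1,\alpha}$-regular part of $\Gamma_1$. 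A diagonal Arzel\`a--Ascoli argument extracts a subsequence converging in $C^1_{loc}$ to some $u$; passing to the limit in the weak formulation (a test function with bounded support is admissible for $u_n$ as soon as $n$ is large) shows $u$ is a weak solution of \eqref{eq:log-cone}, while the two-sided bound passes to the limit and yields \eqref{eq:asym-log-cone} (with $d=2$ for $H_0(x)$ large) together with $u(x)/\ln H_0(x)\to1$. Applying Proposition~\ref{cap-pro:asym-equi} to $u$ then produces $\gamma>0$ and $\beta\in\mathbb{R}$ with $u-\gamma\ln H_0(x)\to\beta$, and comparison with the previous limit forces $\gamma=1$, which is \eqref{cap-eq:thm-asym}.

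\emph{Uniqueness.} Suppose $u_1,u_2$ are weak solutions of \eqref{eq:log-cone}--\eqref{eq:asym-log-cone} satisfying \eqref{cap-eq:thm-asym}, say $u_i(x)-\ln H_0(x)\to\beta_i$. Fix $\epsilon\in(0,1)$. Since $a$ is positively homogeneous of degree $N-1$, the function $(1-\epsilon)u_2$ is again $N$-harmonic in $\Sigma\cap\overline{\Omega}^c$, vanishes on $\Gamma_0$, and satisfies the conormal condition on $\Gamma_1$. Moreover
$$
u_1-(1-\epsilon)u_2=(u_1-u_2)+\epsilon u_2\longrightarrow+\infty\qquad\text{as }H_0(x)\to\infty ,
$$
because $u_1-u_2\to\beta_1-\beta_2$ and $u_2\to+\infty$; hence there is $\overline R>0$, with $\Omega\subset B_{\overline R}^{H_0}(x_0)$, such that $u_1\ge(1-\epsilon)u_2$ on $\Sigma\cap\{H_0(\cdot-x_0)\ge\overline R\}$. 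Applying Lemma~\ref{pre-lem:WCP} on $\Sigma\cap\overline{\Omega}^c\cap\{H_0(\cdot-x_0)<\overline R\}$ --- where $u_1=(1-\epsilon)u_2=0$ on $\Gamma_0$, $u_1\ge(1-\epsilon)u_2$ on $\Sigma\cap\{H_0(\cdot-x_0)=\overline R\}$, and both satisfy the conormal condition on the remaining boundary --- yields $(1-\epsilon)u_2\le u_1$ there, hence in all of $\Sigma\cap\overline{\Omega}^c$. Letting $\epsilon\to0$ gives $u_2\le u_1$, and exchanging the roles of $u_1$ and $u_2$ gives the reverse inequality, so $u_1=u_2$ (and $\beta_1=\beta_2$). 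Note that $cu$ with $c\neq1$ still solves \eqref{eq:log-cone}--\eqref{eq:asym-log-cone}, but with leading coefficient $c$, so it is the normalization \eqref{cap-eq:thm-asym} that pins down the solution.

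\emph{Main obstacle.} The difficulty is technical, not conceptual: the truncated sets $\Omega_n$ and $\Sigma\cap\overline{\Omega}^c\cap\{H_0(\cdot-x_0)<\overline R\}$ need not have Lipschitz boundary near $\partial\Sigma\cap\partial\Omega$, nor near the edges where the outer Wulff sphere meets $\partial\Sigma$, and --- in contrast with Theorem~\ref{thm:over-log-cone} --- no global bound on $\nabla u$ is assumed. To legitimately apply Lemma~\ref{pre-lem:WCP} and to pass to the limit in the mixed boundary conditions near that lower-dimensional singular set, one has to approximate $\Sigma$ by smooth convex cones, in the spirit of the proof of Proposition~\ref{pre-pro:second-order}, and to verify that the Dirichlet conditions persist in the $W^{1,N}$-trace sense used in the definition of weak solution. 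The remaining ingredients --- the barrier computation, the variational solvability of the finite problems, and the compactness --- are routine.
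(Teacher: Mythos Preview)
Your proposal is correct and follows essentially the same approach as the paper: exhaustion by bounded mixed problems solved variationally, comparison with $\ln H_0$-based barriers to obtain uniform local bounds and the constant $\gamma=1$ via Proposition~\ref{cap-pro:asym-equi}, and uniqueness by comparing one solution with a scalar multiple of the other (exploiting the homogeneity of $\Delta_N^H$). The only differences are cosmetic --- you use the fixed barriers $\ln(H_0(\cdot-x_0)/R_i)$ where the paper uses the $R$-dependent annular solutions $U_{R_i,R}$ (which converge to your barriers as $R\to\infty$), and you scale by $(1-\epsilon)$ for uniqueness where the paper scales by $(\ln R+\beta_1)/(\ln R+\beta_2)$; your ``Main obstacle'' concern about Lipschitz corners is overly cautious, since Lemma~\ref{pre-lem:WCP} as stated (and as used in the paper) does not require it.
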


\begin{proof}
Without loss of generality, we prove the assertion in the case that $x_0$ is the origin.

Let $R>1$ be such that $B^{H_0}_R\supset\overline{\Omega}$. We first solve the following local problem
 \begin{equation}\label{cap-eq:F-equi-loc}
\left\{
\begin{array}{ll}
\Delta^H_N U_R=0 & \text{in }\Sigma\cap(B^{H_0}_R\setminus \overline{\Omega}),\\
U_R=0 & \text{on }\Sigma\cap\partial\Omega:=\Gamma_0,\\
\left<a(U_R),\nu\right>=0 &\text{on }\partial\Sigma\cap(B^{H_0}_R\setminus\overline{\Omega}),\\
U_R=\ln R & \text{on }\Sigma\cap\partial B^{H_0}_R:=\Gamma_R.
\end{array}
\right.
\end{equation}
To this aim, we consider the minimization problem
\begin{equation}\label{cap-eq:minimizer}
\inf\left\{\int_{\Sigma\cap B^{H_0}_R}H^N(\nabla\varphi)\,dx:\varphi\in V\right\}
\end{equation}
where 
\begin{align*}
V:=&\{\varphi\in W^{1,N}(\Sigma\cap B^{H_0}_R):\varphi=0\text{ on }\Gamma_R\text{ and }\\&\varphi-f=w\chi_{\Sigma}\text{ for some }w\in W^{1,N}_0(B^{H_0}_R\setminus\overline{\Omega})\}.
\end{align*}
Here $f\in C_0^\infty(B^{H_0}_R)$ is a given function such that $f=1$ in a neighborhood of $\Omega$. 

By the Poincar\'e inequality and a standard variational argument (see for example \cite{Lewis1977}),
there is a minimizer $v_R$ to \eqref{cap-eq:minimizer} which solves the Euler-Lagrange equation
 \begin{equation*}
\left\{
\begin{array}{ll}
\Delta^H_N v_R=0 & \text{in }\Sigma\cap(B^{H_0}_R\setminus \overline{\Omega}),\\
v_R=1 & \text{on }\Gamma_0,\\
\left<a(v_R),\nu\right>=0 &\text{on }\partial\Sigma\cap(B^{H_0}_R\setminus\overline{\Omega}),\\
v_R=0 & \text{on }\Gamma_R.
\end{array}
\right.
\end{equation*}
Thus, we obtain that $$U_R(x)=(1-v_R(x))\ln R$$
is the unique solution of \eqref{cap-eq:F-equi-loc}.

We next show that the approximation of $U_R$ is a solution to problem \eqref{eq:log-cone}--\eqref{eq:asym-log-cone}.

 Fix $R_1>0$ such that $B^{H_0}_{R_1}\subset\Omega$ and let $U_{R_1,R}$ be the solution to \eqref{cap-eq:F-equi-loc} for $\Omega=B^{H_0}_{R_1}$. By Lemma \ref{pre-lem:WCP} it holds
\begin{equation}\label{cap-eq:comparison-loc}
0\leq U_R(x)\leq U_{R_1,R} \quad\text{for }x\in\Sigma\cap (B^{H_0}_R\setminus\overline{\Omega}),
\end{equation}
where $$U_{R_1,R}=\frac{\ln R\left(\ln H_0(x)-\ln R_1\right)}{\ln R-\ln R_1}.$$
Since $U_{R_1,R}$ is uniformly bounded in $L_{loc}^\infty(\overline{\Sigma}\cap\Omega^c)$ with respect to $R$, so is the function $U_R$. Hence, applying classical regularity results for quasilinear PDEs \cite{DiBen1983,Serrin1964,Tolksdorf1984} we deduce that $U_R$ is bounded in $C_{loc}^{1,\alpha}(\Sigma\cap\overline{\Omega}^c)\cap W_{loc}^{1,N}(\overline{\Sigma}\cap\Omega^c)$, uniformly in $R$. By the Arzela--Ascoli theorem and a diagonal process one can find a sequence $R_j\to\infty$ so that $U_{R_j}\to u$ in $C_{loc}^1(\Sigma\cap\overline{\Omega}^c)$ and $U_{R_j}\rightharpoonup u$ in $W_{loc}^{1,N}(\overline{\Sigma}\cap\Omega^c)$. Moreover, thanks to the boundary regularity result in \cite{Lieberman1988}, it follows that $u\in C^{1,\alpha}((\Sigma\cap\overline{\Omega}^c)\cup\Gamma_0)$. With these assertions, it turns out that $u$ is a solution of problem \eqref{eq:log-cone}. It suffices to verify that $u$ satisfies \eqref{eq:asym-log-cone}.

Indeed, fix $R_2>0$ such that $B^{H_0}_{R_2}\supset\Omega$ and let $$U_{R_2,R}=\frac{\ln R\left(\ln H_0(x)-\ln R_2\right)}{\ln R-\ln R_2}$$ be the solution to \eqref{cap-eq:F-equi-loc} for $\Omega=B^{H_0}_{R_2}$. It holds
\begin{equation}\label{cap-eq:comparison-loc-lower}
U_R(x)\geq U_{R_2,R} \quad\text{for }x\in\Sigma\cap (B^{H_0}_R\setminus B^{H_0}_{R_2}).
\end{equation}
Passing to the limit in \eqref{cap-eq:comparison-loc} and \eqref{cap-eq:comparison-loc-lower} as $R\to\infty$, we obtain that
\begin{equation*}
\ln H_0(x)-\ln R_2\leq u(x)\leq \ln H_0(x)-\ln R_1
\end{equation*}
for any $x\in\Sigma\setminus B^{H_0}_{R_2}$. This readily implies $$\lim_{H_0(x)\to\infty}\frac{u(x)}{\ln H_0(x)}=1.$$
That is, \eqref{cap-eq:lim-gamma} holds with $\gamma=1$. From Proposition \ref{cap-pro:asym-equi}, we deduce that $u$ is a weak solution to problem \eqref{eq:log-cone} satisfying \eqref{cap-eq:thm-asym}.

Finally, let us show the uniqueness of $u$. Suppose that there is another function $v$ which solves problem \eqref{eq:log-cone} and satisfies $$\lim_{H_0(x)\to\infty}\left(v(x)-\ln H_0(x)\right)=\beta'$$ for some $\beta'\in\mathbb{R}$. Let $\beta_i$ $(i=1,2)$ be such that $\beta_1>\beta'$ and $\beta_2<\beta$. Consider the function $$\tilde{u}_R(x)=\frac{\ln R+\beta_1}{\ln R+\beta_2}u(x).$$ For sufficiently large $R$, $\tilde{u}_R\geq v$ on $\Gamma_0$ and $\Gamma_R$. Then by Lemma \ref{pre-lem:WCP}, $$\tilde{u}_R\geq v \quad\text{in }\Sigma\cap (B^{H_0}_R\setminus\overline{\Omega}).$$
Passing to the limit as $R\to\infty$, one has
$$u\geq v\quad\text{in } 
\Sigma\cap\overline{\Omega}^c.$$
Similarly, it also holds that $u\leq v$ in $\Sigma\cap\overline{\Omega}^c$ by taking $\beta_1<\beta'$ and $\beta_2>\beta$ and then applying Lemma \ref{pre-lem:WCP} again. We thus conclude $u=v$. This completes the proof.
\end{proof}

\section{Pohozaev identity}\label{sec:Pohozaev}

In this section, we derive the following Pohozaev-type identity for weak solutions of anisotropic $p$-Laplace equations with $p>1$. 
 
\begin{theorem}\label{P-thm:Pohozaev}
Let $\Omega\subset\mathbb{R}^N$ be a bounded open set with Lipschitz boundary and let $H$ be as in Theorem \ref{thm:over-log-cone}. Assume that $p>1$ and $u\in C^1(\overline{\Omega})$ satisfies
\begin{equation}\label{P-eq:distrib-Finsler}
\mathrm{div}\left(H^{p-1}(\nabla u)\nabla H(\nabla u)\right)=0\quad\text{in }\Omega.
\end{equation}
Then \begin{align*}
&\frac{p-N}{p}\int_{\Omega}H^p(\nabla u)\,dx\\
=&\int_{\partial\Omega}\left(H^{p-1}(\nabla u)\left<x,\nabla u\right>\left<\nabla H(\nabla u),\nu\right>-\frac{1}{p}H^p(\nabla u)\left<x,\nu\right>\right)d\mathcal{H}^{N-1}.
\end{align*}
\end{theorem}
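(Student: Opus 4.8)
The plan is to derive the identity via the standard Pohozaev multiplier technique, using the vector field $x$ (the dilation generator) as the test direction, but being careful about regularity since $u$ is only $C^1$. First I would observe that the natural quantity to integrate is $\langle x, \nabla u\rangle$ against the equation \eqref{P-eq:distrib-Finsler}. Setting $a(\xi) = H^{p-1}(\xi)\nabla H(\xi)$ (consistent with \eqref{intro-eq:def-a} when $p=N$, and degree $p-1$ homogeneous in general), the equation reads $\mathrm{div}(a(\nabla u)) = 0$ in $\Omega$. The key algebraic facts I will use are the homogeneity relations from Lemma \ref{pre-lem:pro-H}: $\langle \nabla H(\xi),\xi\rangle = H(\xi)$, which gives $\langle a(\xi),\xi\rangle = H^{p-1}(\xi)\langle\nabla H(\xi),\xi\rangle = H^p(\xi)$, and the Euler relation for $H^p$, namely $\langle \nabla_\xi (H^p(\xi)), \eta\rangle = p\,H^{p-1}(\xi)\langle \nabla H(\xi),\eta\rangle = p\langle a(\xi),\eta\rangle$.

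The computation proceeds as follows. Since $\mathrm{div}(a(\nabla u)) = 0$, multiplying by $\langle x,\nabla u\rangle$ and integrating by parts (a divergence-theorem step, legitimate because the product $a(\nabla u)\langle x,\nabla u\rangle$ is continuous up to $\partial\Omega$ by $u \in C^1(\overline\Omega)$) yields
\begin{equation*}
0 = \int_{\partial\Omega} \langle x,\nabla u\rangle\,\langle a(\nabla u),\nu\rangle\,d\mathcal{H}^{N-1} - \int_\Omega \langle a(\nabla u), \nabla(\langle x,\nabla u\rangle)\rangle\,dx.
\end{equation*}
Now expand $\nabla_j(\langle x,\nabla u\rangle) = \partial_j u + \sum_i x_i \partial_{ij} u = \partial_j u + \langle x, \nabla(\partial_j u)\rangle$. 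Pairing with $a(\nabla u)$, the first term contributes $\langle a(\nabla u),\nabla u\rangle = H^p(\nabla u)$, while the second term is $\sum_{i,j} a_j(\nabla u)\, x_i\,\partial_{ij}u = \langle x, \nabla_x(\tfrac1p H^p(\nabla u))\rangle$ by the Euler relation above (treating $H^p(\nabla u(x))$ as a composite function of $x$). Thus the interior integral becomes $\int_\Omega H^p(\nabla u)\,dx + \tfrac1p\int_\Omega \langle x, \nabla(H^p(\nabla u))\rangle\,dx$, and integrating the last term by parts (again over a Lipschitz domain with the continuous integrand $H^p(\nabla u)$) gives $\tfrac1p(-N\int_\Omega H^p(\nabla u)\,dx + \int_{\partial\Omega} H^p(\nabla u)\langle x,\nu\rangle\,d\mathcal{H}^{N-1})$. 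Collecting terms and rearranging produces exactly the asserted identity, using $\langle a(\nabla u),\nu\rangle = H^{p-1}(\nabla u)\langle \nabla H(\nabla u),\nu\rangle$.

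The main obstacle — and the only genuine subtlety — is regularity: the manipulation above freely uses second derivatives $\partial_{ij}u$, which need not exist pointwise under the hypothesis $u \in C^1(\overline\Omega)$ alone. I expect the proof to handle this by an approximation argument: either mollify $u$ (or rather replace $a$ by the regularized non-degenerate $a^\epsilon$ as in the proof of Proposition \ref{pre-pro:second-order}, invoking the second-order regularity $a(\nabla u)\in W^{1,2}_{loc}$ there), carry out the identity for the smooth/nondegenerate approximants where all integrations by parts are justified, and then pass to the limit using the $C^1(\overline\Omega)$ convergence to control the boundary integrals and dominated convergence for the interior ones. Since every integrand in the final identity involves only $\nabla u$ (no second derivatives), the limit identity is clean; all second-order quantities appear only in intermediate steps and cancel or get integrated away. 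One should also note the divergence-theorem steps on a merely Lipschitz domain are standard for $C^1$ (indeed $W^{1,1}$) vector fields, so no further care is needed there.
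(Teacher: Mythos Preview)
Your formal computation is correct and lands on the right boundary terms, and you correctly identify that the only genuine obstacle is the appearance of second derivatives of $u$ in the intermediate steps. However, the approximation you propose is not sufficiently justified. If you solve $\mathrm{div}(a^\epsilon(\nabla u^\epsilon))=0$ in $\Omega$ with $u^\epsilon=u$ on $\partial\Omega$, then on a merely Lipschitz domain you do not in general get $u^\epsilon\in C^1(\overline{\Omega})$, so the boundary integrals for $u^\epsilon$ need not even make sense, let alone converge; your claimed ``$C^1(\overline{\Omega})$ convergence'' is therefore unsupported. Mollifying $u$ instead destroys the PDE and introduces an error term $\int_\Omega\langle x,\nabla u_\epsilon\rangle\,\mathrm{div}(a(\nabla u_\epsilon))\,dx$ whose vanishing in the limit you would have to prove separately.

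The paper sidesteps this entirely by a different device. Rather than approximating $u$ or $a$, it first establishes the \emph{local} identity
\[
\mathrm{div}\Bigl(\langle x,\nabla u\rangle\,a(\nabla u)-\tfrac{1}{p}\,x\,H^p(\nabla u)\Bigr)=\tfrac{p-N}{p}\,H^p(\nabla u)
\]
in the sense of distributions in $\Omega$ (Lemma~\ref{P-lem:loc-Pohozaev}). This is done by cutting off near the critical set $Z=\{\nabla u=0\}$ via $\psi_\delta=\min\{\delta^{1-p}H^{p-1}(\nabla u),1\}$: away from $Z$ the equation is uniformly elliptic, hence $u\in W^{2,2}_{\mathrm{loc}}$ and the identity holds classically, while on the region where $H(\nabla u)<\delta$ the smallness of the integrand kills the remainder as $\delta\to0$. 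The cutoff $\psi_\delta$ lies in $W^{1,2}_{\mathrm{loc}}$ precisely because $a(\nabla u)\in W^{1,2}_{\mathrm{loc}}$ (the stress-field regularity you cite) and $H^{p-1}(\nabla u)=H_0(a(\nabla u))$. Once the distributional identity is in hand, a generalized divergence theorem for continuous vector fields with $L^1$ distributional divergence (Lemma~\ref{P-lem:divergence}) converts it to the stated boundary identity; only the hypothesis $u\in C^1(\overline{\Omega})$ is used at that stage, and no regularity of any approximant up to $\partial\Omega$ is needed.
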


Recall that $\nu$ denotes the unit outer normal to $\partial\Omega$. In the case that $\partial\Omega\in C^2$ and $H$ is the Euclidean norm, Theorem \ref{P-thm:Pohozaev} was established in \cite{Damascelli2009,Takac2012} for general $p$-Laplace equations of the form $-\Delta_p u=f$, under suitable regularity assumptions on $f$. In the present paper, for our applications on homogeneous equations, we generalize the argument as in \cite{Takac2012} to the anisotropic counterparts of $p$-harmonic functions.

We start with the following local version of the identity. 
\begin{lemma}\label{P-lem:loc-Pohozaev}
Let $1<p<\infty$ and $\Omega$ be an open subset of $\mathbb{R}^N$. Assume that  $u\in C^1(\Omega)$ satisfies \eqref{P-eq:distrib-Finsler}. Then
\begin{equation}\label{P-eq:loc-Pohozaev}
\mathrm{div}\left(\left<x,\nabla u\right>H^{p-1}(\nabla u)\nabla H(\nabla u)-\frac{1}{p}xH^p(
\nabla u)\right)=\frac{p-N}{p}H^p(\nabla u)
\end{equation}
holds in the sense of distributions in $\Omega$.
\end{lemma}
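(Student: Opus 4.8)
The plan is to verify the identity \eqref{P-eq:loc-Pohozaev} by a direct computation, carried out in the sense of distributions since $u$ is only assumed $C^1$ and $H^p(\nabla u)$ need not be differentiable in the classical sense. Working with $a(\xi) = H^{p-1}(\xi)\nabla H(\xi)$ (the natural $p$-analogue of the vector field in \eqref{intro-eq:def-a}), the left-hand side of \eqref{P-eq:loc-Pohozaev} is $\mathrm{div}\bigl(\langle x,\nabla u\rangle\, a(\nabla u) - \tfrac1p x H^p(\nabla u)\bigr)$. Formally expanding the divergence of the first term gives three contributions: $\langle \nabla\langle x,\nabla u\rangle, a(\nabla u)\rangle + \langle x,\nabla u\rangle\, \mathrm{div}\,a(\nabla u)$. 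The middle factor vanishes by the equation \eqref{P-eq:distrib-Finsler}. For the first, one writes $\partial_i\langle x,\nabla u\rangle = \partial_i u + \langle x, \nabla\partial_i u\rangle = \partial_i u + x_j\partial_{ij}u$, so that $\langle\nabla\langle x,\nabla u\rangle, a(\nabla u)\rangle = \langle a(\nabla u),\nabla u\rangle + a_i(\nabla u) x_j \partial_{ij}u$. The key algebraic fact is Euler's identity applied to the $p$-homogeneous function $H^p$: since $a(\xi) = \tfrac1p\nabla(H^p)(\xi)$, one has $\langle a(\xi),\xi\rangle = H^p(\xi)$ (this is exactly Lemma \ref{pre-lem:pro-H} raised to the appropriate power). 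Hence the first contribution simplifies to $H^p(\nabla u) + a_i(\nabla u)x_j\partial_{ij}u$. Meanwhile $\mathrm{div}(\tfrac1p x H^p(\nabla u)) = \tfrac{N}{p}H^p(\nabla u) + \tfrac1p x_j \partial_j(H^p(\nabla u)) = \tfrac Np H^p(\nabla u) + \tfrac1p x_j \cdot p\, a_i(\nabla u)\partial_{ij}u$, using the chain rule $\partial_j(H^p(\nabla u)) = \nabla(H^p)(\nabla u)\cdot\nabla\partial_j u = p\, a_i(\nabla u)\partial_{ij}u$. The second-order terms $a_i(\nabla u)x_j\partial_{ij}u$ cancel exactly between the two pieces, leaving $H^p(\nabla u) - \tfrac Np H^p(\nabla u) = \tfrac{p-N}{p}H^p(\nabla u)$, which is the claimed right-hand side.

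The genuine difficulty, and the reason the statement is phrased distributionally, is that $u\in C^1(\Omega)$ does not license the manipulations with $\partial_{ij}u$ or with $\partial_j(H^p(\nabla u))$ above; even the product $\langle x,\nabla u\rangle a(\nabla u)$ is only continuous, so its divergence must be interpreted against test functions. The plan to make this rigorous is a mollification argument. One regularizes $u$ by convolution, $u_\varepsilon = u * \phi_\varepsilon$, on compactly contained subdomains; then $u_\varepsilon\in C^\infty$ and $u_\varepsilon \to u$ in $C^1_{loc}$, but $u_\varepsilon$ no longer solves \eqref{P-eq:distrib-Finsler} exactly. A cleaner route, following \cite{Takac2012}, is to test the equation \eqref{P-eq:distrib-Finsler} directly: for $\varphi\in C_0^\infty(\Omega)$ one has $\int_\Omega \langle a(\nabla u), \nabla(\langle x,\nabla u\rangle\varphi)\rangle\,dx = 0$, expand $\nabla(\langle x,\nabla u\rangle\varphi) = \varphi\nabla\langle x,\nabla u\rangle + \langle x,\nabla u\rangle\nabla\varphi$, and integrate by parts the term $\int_\Omega \varphi\, a_i(\nabla u)\partial_i\langle x,\nabla u\rangle\,dx$ — but this last step again needs $\partial_i\langle x,\nabla u\rangle$, so one must mollify. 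The device is: since $a(\nabla u)$ is divergence-free distributionally, the vector field $a(\nabla u)$ can be paired with $\nabla w$ for any $w\in W^{1,p}_{loc}$ with compact support, and one approximates $w = \langle x,\nabla u\rangle\varphi$ by $w_\varepsilon = \langle x, \nabla u_\varepsilon\rangle\varphi$, using that $\nabla u_\varepsilon\to\nabla u$ uniformly on compacta to pass to the limit; on the regularized level all the calculus above is legitimate, and the second-order terms are handled by integrating $a_i(\nabla u)x_j\partial_{ij}u_\varepsilon$ by parts in $x_j$, which is where the homogeneity identity $\langle a(\xi),\xi\rangle = H^p(\xi)$ re-enters to produce the clean cancellation.

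In summary, the steps are: (1) record the pointwise/distributional form of Euler's identity for $H^p$, namely $\langle a(\xi),\xi\rangle = H^p(\xi)$ and $\nabla(H^p(\nabla u)) = p\,a_i(\nabla u)\nabla\partial_i u$ at the regularized level; (2) regularize $u$ and test \eqref{P-eq:distrib-Finsler} against $\langle x,\nabla u_\varepsilon\rangle\varphi$ for $\varphi\in C_0^\infty(\Omega)$; (3) expand, use the equation to kill the $\mathrm{div}\,a$ term, and integrate the second-order terms by parts to exhibit the exact cancellation, obtaining the identity with $u_\varepsilon$ in place of $u$ modulo an error tending to $0$; (4) pass $\varepsilon\to 0$ using $C^1_{loc}$ convergence of $u_\varepsilon$ and continuity of $a$ and $H$. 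I expect step (3) — bookkeeping the second-order terms so that the mollification error vanishes, and in particular ensuring $a(\nabla u_\varepsilon)$ retains the requisite monotonicity/boundedness so that $\int a_i(\nabla u_\varepsilon)x_j\partial_{ij}u_\varepsilon\varphi$ converges correctly — to be the main technical obstacle; everything else is homogeneity algebra.
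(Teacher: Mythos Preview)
Your formal computation is correct, and you have correctly identified that the only real issue is justifying it when $u$ is merely $C^1$. However, the mollification scheme you propose has a genuine gap. When you test the equation with $\langle x,\nabla u_\varepsilon\rangle\varphi$ and expand, the term $\int a_i(\nabla u)\,x_j\,\partial_{ij}u_\varepsilon\,\varphi\,dx$ must be converted into $\tfrac1p\int x_j\,\partial_j\bigl(H^p(\nabla u_\varepsilon)\bigr)\varphi\,dx$ via the chain rule; but the chain rule gives $a_i(\nabla u_\varepsilon)\partial_{ij}u_\varepsilon$, not $a_i(\nabla u)\partial_{ij}u_\varepsilon$. The discrepancy
\[
\int \bigl[a_i(\nabla u)-a_i(\nabla u_\varepsilon)\bigr]\,x_j\,\partial_{ij}u_\varepsilon\,\varphi\,dx
\]
is only controlled by $\omega(\varepsilon)\cdot\|D^2u_\varepsilon\|_{L^\infty}$, where $\omega$ is the modulus of continuity of $\nabla u$; since $\|D^2u_\varepsilon\|_{L^\infty}$ can grow like $\varepsilon^{-1}$ and $\omega(\varepsilon)/\varepsilon$ need not vanish for merely continuous $\nabla u$, this error does not go to zero. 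Integrating by parts in $x_j$ instead requires $a(\nabla u)\in W^{1,1}_{loc}$, which is exactly the nontrivial regularity you have not invoked.

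The paper takes a different route. It first observes that away from the critical set $Z=\{\nabla u=0\}$ the equation is uniformly elliptic, so $u\in W^{2,2}_{loc}(\Omega\setminus Z)$ and the identity holds distributionally there by your formal computation. The whole difficulty is then concentrated at $Z$. To handle it, the paper uses the second-order regularity $a(\nabla u)\in W^{1,2}_{loc}(\Omega)$ from \cite{Avelin2018} to build a cutoff $\psi_\delta(x)=\min\{\delta^{1-p}H^{p-1}(\nabla u(x)),1\}\in W^{1,2}_{loc}$, which equals $1$ on $\{H(\nabla u)\ge\delta\}$ and $0$ on $Z$; one then writes $\varphi=\psi_\delta\varphi+(1-\psi_\delta)\varphi$, applies the already-established identity to the first piece, and shows the second piece contributes nothing in the limit $\delta\to0$ because the vector field $V_p$ satisfies $|V_p|\lesssim H^p(\nabla u)\le\delta^p$ on the support of $(1-\psi_\delta)$, which beats the $\delta^{1-p}$ coming from $|\nabla\psi_\delta|$. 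The key insight you are missing is that the degeneracy is localized at $Z$ and that $V_p$ vanishes to high enough order there; mollifying $u$ blurs this structure and creates uncontrolled second-order terms everywhere.
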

\begin{proof}
A direct computation shows that \eqref{P-eq:loc-Pohozaev} holds in the sense of distributions in the domain $\Omega\setminus Z$ where $$Z=\{x\in\Omega:\nabla u(x)=0\},$$ since $u\in W_{loc}^{2,2}(\Omega\setminus Z)$ by classical regularity theory for elliptic equations. Hence, we have to show that 
\begin{align}
&\int_{\Omega}\left(H^{p-1}(\nabla u)\left<x,\nabla u\right>\left<\nabla H(\nabla u),\nabla\varphi\right>-\frac{1}{p}H^p(
\nabla u)\left<x,\nabla\varphi\right>\right)dx\notag\\
&=\frac{N-p}{p}\int_{\Omega}H^p(\nabla u)\varphi\,dx\label{P-eq:distrib-proof}
\end{align}
holds for every $\varphi\in C_0^\infty(\Omega)$. 

From \cite[Theorem 4.2]{Avelin2018} (see also \cite{ACF}) we have $$H^{p-1}(\nabla u)\nabla H(\nabla u)\in W_{loc}^{1,2}(\Omega).$$For $\delta>0$ and $x\in\Omega$, let $$\psi_{\delta}(x)=\min\{\delta^{1-p}H^{p-1}(\nabla u(x)),1\}.$$ 
By \eqref{pre-eq:pro-H-con-1}, we have $H^{p-1}(\nabla u)=H_0(H^{p-1}(\nabla u)\nabla H(\nabla u))$. Since $H_0$ is Lipschitz continuous on $\mathbb{R}^N$, the chain rule entails $$H^{p-1}(\nabla u)\in W_{loc}^{1,2}(\Omega).$$ Thus, $\psi_\delta(x)\in W_{loc}^{1,2}(\Omega)$ and 
\begin{equation*}
\psi_{\delta}(x)=
\left\{\begin{array}{ll}
1 &  \text{if }x\in U_\delta:=\{x\in\Omega:H(\nabla u(x))\geq\delta\},\\
0 & \text{if }x\in Z.
\end{array}
\right.
\end{equation*}
Moreover, for every $x\in\Omega$, $\psi_\delta(x)\to\psi_0(x)$ as $\delta\to0$, where 
\begin{equation*}
\psi_0(x):=\left\{\begin{array}{ll}
1 &  \text{if }x\in \Omega\setminus Z,\\
0 & \text{if }x\in Z.
\end{array}
\right.
\end{equation*}
Given $\varphi\in C_0^\infty(\Omega)$, we thus can decompose it as the sum 
\begin{equation}\label{P-eq:phi-decompose}
\varphi=\psi_\delta\varphi+(1-\psi_\delta)\varphi
\end{equation}
 with $\psi_\delta\varphi \in W_{0}^{1,2}(\Omega\setminus Z)$ and $(1-\psi_\delta)\varphi\in W_0^{1,2}(\Omega\setminus U_\delta)$. Since $C_0^\infty(\Omega\setminus Z)$ is dense in $W_0^{1,2}(\Omega\setminus Z)$, \eqref{P-eq:distrib-proof} holds for $\psi_\delta\varphi$ in place of $\varphi$. Putting \eqref{P-eq:phi-decompose} into \eqref{P-eq:distrib-proof}, the left-hand side becomes
\begin{equation}\label{P-eq:left}
\frac{N-p}{p}\int_{\Omega}H^p(\nabla u)\psi_\delta\varphi\,dx+\int_{\Omega}\left<V_p,\nabla((1-\psi_\delta)\varphi)\right>dx
\end{equation}
where $$V_p=V_p(x):=H^{p-1}(\nabla u)\left<x,\nabla u\right>\nabla H(\nabla u)-\frac{1}{p}xH^p(
\nabla u).$$ For $\delta\in(0,1)$, we estimate
\begin{align}
&\int_{\Omega}\left<V_p,\nabla((1-\psi_\delta)\varphi)\right>dx\notag\\&\leq C(p) \int_{\Omega\setminus U_\delta}H^p(\nabla u)|x|\left(|\nabla\varphi|+|\varphi||\nabla\psi_\delta|\right)dx\notag\\
&\leq C(p)\delta\int_{\Omega\setminus U_\delta}\delta^{-p}H^p(\nabla u)|x|\left(\delta^{p-1}|\nabla\varphi|+\delta^{p-1}|\varphi||\nabla\psi_\delta|\right)dx\notag\\
&\leq C(p)\delta\int_{\Omega\setminus U_\delta}|x|\left(|\nabla\varphi|+\delta^{p-1}|\varphi||\nabla\psi_\delta|\right)dx.\label{P-eq:estimate-V}
\end{align}
Notice that the integrand in the last integral in \eqref{P-eq:estimate-V} is independent of $\delta$. Thus, by the dominated convergence theorem, we find 
\begin{equation}\label{P-eq:estimate-V-1}
\int_{\Omega}\left<V_p,\nabla((1-\psi_\delta)\varphi)\right>dx\to 0,\quad\text{as }\delta\to 0.
\end{equation}
Combining \eqref{P-eq:left} and \eqref{P-eq:estimate-V-1}, one deduces that 
\begin{align*}
\int_{\Omega}\left<V_p,\nabla \varphi\right>dx=\lim_{\delta\to 0}\int_{\Omega}\left<V_p,\nabla \varphi\right>dx&=\frac{N-p}{p}\lim_{\delta\to 0}\int_{\Omega}H^p(\nabla u)\psi_\delta\varphi\,dx\\
&=\frac{N-p}{p}\int_{\Omega}H^p(\nabla u)\psi_0\varphi\,dx.\\
&=\frac{N-p}{p}\int_{\Omega}H^p(\nabla u)\varphi\,dx.
\end{align*}
This shows \eqref{P-eq:distrib-proof}, thus completing the proof.
\end{proof}

To prove Theorem \ref{P-thm:Pohozaev}, we also need the following generalized version of the divergence theorem, and it will be used in proving Theorem \ref{thm:over-log-cone} as well in the next section. It is probably well-known, but we provide a proof for completeness.
 
\begin{lemma}\label{P-lem:divergence}
Let $\Omega$ be a bounded open subset of $\mathbb{R}^N$ with Lipschitz boundary and let $f\in L^1(\Omega)$. Assume that $\mathbf{a}\in C^0(\overline{\Omega};\mathbb{R}^N)$ satisfies $\mathrm{div}\,\mathbf{a}=f$ in the sense of distributions in $\Omega$. Then we have
\begin{equation*}
\int_{\partial\Omega}\left<\mathbf{a},\nu\right>d\mathcal{H}^{N-1}=\int_{\Omega}f(x)\,dx.
\end{equation*}
\end{lemma}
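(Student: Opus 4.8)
The plan is to approximate on two fronts: mollify $\mathbf{a}$ so that the classical Gauss--Green theorem becomes available, and exhaust $\Omega$ from the inside by smooth domains so that the boundary integral over $\partial\Omega$ is recovered in the limit. The Lipschitz regularity of $\partial\Omega$ will enter only through the inner exhaustion.

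\textbf{Step 1: the identity on smooth subdomains.} First I would prove that
$$\int_{\partial\omega}\langle\mathbf{a},\nu\rangle\,d\mathcal{H}^{N-1}=\int_{\omega}f\,dx$$
for every open set $\omega$ with smooth boundary and $\overline{\omega}\subset\Omega$. Fix such an $\omega$, set $d_0:=\mathrm{dist}(\overline{\omega},\partial\Omega)>0$, let $\{\rho_\epsilon\}$ be standard mollifiers and put $\mathbf{a}_\epsilon:=\mathbf{a}*\rho_\epsilon$. For $\epsilon<d_0$ the field $\mathbf{a}_\epsilon$ is of class $C^\infty$ on the open neighbourhood $\{x:\mathrm{dist}(x,\partial\Omega)>\epsilon\}$ of $\overline{\omega}$; moreover, since $z\mapsto\rho_\epsilon(x-z)\in C_0^\infty(\Omega)$ whenever $B_\epsilon(x)\subset\Omega$, the distributional identity $\mathrm{div}\,\mathbf{a}=f$ yields $\mathrm{div}\,\mathbf{a}_\epsilon=f*\rho_\epsilon$ there. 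Applying the classical divergence theorem to $\mathbf{a}_\epsilon\in C^1(\overline{\omega};\mathbb{R}^N)$ on the smooth domain $\omega$ gives $\int_\omega (f*\rho_\epsilon)\,dx=\int_{\partial\omega}\langle\mathbf{a}_\epsilon,\nu\rangle\,d\mathcal{H}^{N-1}$. Letting $\epsilon\to0$, the right-hand side converges to $\int_{\partial\omega}\langle\mathbf{a},\nu\rangle\,d\mathcal{H}^{N-1}$ because $\mathbf{a}_\epsilon\to\mathbf{a}$ uniformly on $\overline{\omega}$ (uniform continuity of $\mathbf{a}$ on a compact neighbourhood of $\overline{\omega}$) and $\mathcal{H}^{N-1}(\partial\omega)<\infty$, while the left-hand side converges to $\int_\omega f\,dx$ because $f*\rho_\epsilon\to f$ in $L^1$ of a neighbourhood of $\overline{\omega}$. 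This proves the claim.

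\textbf{Step 2: inner exhaustion and passage to the limit.} Since $\partial\Omega$ is Lipschitz, I would produce open sets $\omega_j$ with smooth boundaries such that $\overline{\omega_j}\subset\Omega$, $\omega_j\uparrow\Omega$, and
$$\int_{\partial\omega_j}\langle G,\nu_j\rangle\,d\mathcal{H}^{N-1}\longrightarrow\int_{\partial\Omega}\langle G,\nu\rangle\,d\mathcal{H}^{N-1}\qquad\text{for every }G\in C(\overline{\Omega};\mathbb{R}^N).$$
Concretely, one writes $\partial\Omega$ locally, after a rotation, as a graph $x_N=\phi(x')$ of a Lipschitz function, mollifies $\phi$ to smooth functions $\phi_\delta$ with $\phi_\delta\to\phi$ uniformly, $\nabla\phi_\delta\to\nabla\phi$ almost everywhere and $\|\nabla\phi_\delta\|_\infty\le\mathrm{Lip}(\phi)$, and pushes the graph strictly inward; on each chart the area element $\sqrt{1+|\nabla\phi_\delta|^2}\,dx'$ and the normal $\frac{(\nabla\phi_\delta,-1)}{\sqrt{1+|\nabla\phi_\delta|^2}}$ converge by dominated convergence, and one patches with a partition of unity. (Alternatively, one may take $\omega_t=\Phi_t(\Omega)$ for a global bi-Lipschitz inward push $\Phi_t=\mathrm{id}+t\mathbf{v}$, $t\to0^+$, provided by the interior cone property of $\Omega$, and invoke the area formula together with $D\Phi_t\to I$.) Applying Step 1 with $\omega=\omega_j$ and letting $j\to\infty$, the left-hand side tends to $\int_{\partial\Omega}\langle\mathbf{a},\nu\rangle\,d\mathcal{H}^{N-1}$ by the displayed convergence with $G=\mathbf{a}$, while $\int_{\omega_j}f\,dx\to\int_\Omega f\,dx$ by dominated convergence, since $\chi_{\omega_j}f\to\chi_\Omega f$ pointwise and $|f|\in L^1(\Omega)$. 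This gives the lemma.

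\textbf{Main obstacle.} The delicate point is Step 2: constructing inner smooth domains whose boundaries converge to $\partial\Omega$ strongly enough that surface integrals of the merely continuous field $\langle\mathbf{a},\cdot\rangle$ — and not only of smooth compactly supported fields — pass to the limit, i.e.\ $\mathcal{H}^{N-1}\llcorner\partial\omega_j\rightharpoonup\mathcal{H}^{N-1}\llcorner\partial\Omega$ together with almost everywhere convergence of the outer normals. This is precisely where the Lipschitz regularity of $\partial\Omega$ is used; for smoother boundaries it is immediate, and Step 1 alone already yields the full statement.
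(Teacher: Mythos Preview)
Your proposal is correct, but the paper proceeds differently and more economically. Rather than building an inner exhaustion by smooth domains, the paper tests the distributional identity $\mathrm{div}\,\mathbf{a}=f$ against a single family of Lipschitz cutoff functions $\psi_k(x)=\phi_k(\mathrm{dist}(x,\mathbb{R}^N\setminus\Omega))$, where $\phi_k$ is piecewise linear and vanishes in a $1/k$-collar of $\partial\Omega$; the key input, quoted from \cite{CDLP1988}, is that $-\nabla\psi_k\to\nu\,\mathcal{H}^{N-1}\llcorner\partial\Omega$ weakly* as measures on $\overline{\Omega}$, so that $-\int_\Omega\langle\mathbf{a},\nabla\psi_k\rangle\to\int_{\partial\Omega}\langle\mathbf{a},\nu\rangle$ directly for any $\mathbf{a}\in C^0(\overline{\Omega};\mathbb{R}^N)$, while the right-hand side $\int_\Omega f\psi_k\to\int_\Omega f$ by dominated convergence. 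This replaces your two-step mollify-then-exhaust argument by a single limit, and absorbs the delicate boundary-convergence issue you flag in your ``Main obstacle'' into one cited measure-theoretic fact. What your approach buys is self-containment and a clearer geometric picture (you actually see the approximating surfaces), at the cost of having to verify by hand the construction of $\omega_j$ with convergent normals and area elements---work that the paper outsources to \cite{CDLP1988}.
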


\begin{proof}
As in \cite[Lemma 2]{Squassina2003}, we let $k\geq1$ and $\phi_k:\mathbb{R}\to[0,1]$ be given by
\begin{equation*}
\phi_k(s)=\left\{
\begin{array}{ll}
0 &\text{if }s\leq\frac{1}{k},\\
ks-1 &\text{if }\frac{1}{k}<s<\frac{2}{k},\\
1 &\text{if }s\geq\frac{2}{k}.
\end{array}
\right.
\end{equation*}
Let $\psi_k\in C_0^{0,1}(\Omega)$ be given by $$\psi_k(x)=\phi_k(\mathrm{dist}(x,\mathbb{R}^N\setminus\Omega)).$$
From \cite[Sect. 7]{CDLP1988}, we have that $-\nabla\psi_k\to\nu\mathcal{H}^{N-1}\llcorner\partial\Omega$ weakly* in the sense of measures on $\overline{\Omega}$, namely,
\begin{equation}\label{P-eq:weak-star-normal}
\lim_{k\to\infty}\int_\Omega\left<v,\nabla\psi_k\right>\,dx=-\int_{\partial\Omega}\left<v,\nu\right>\,d\mathcal{H}^{N-1},\quad\forall v\in C^0(\overline{\Omega};\mathbb{R}^N).
\end{equation}
Also, $\lim_{k\to\infty}\psi_k=1$ for every $x\in\Omega$.

Since $C_0^{0,1}(\Omega)\hookrightarrow W_0^{1,q}(\Omega)\hookrightarrow L^\infty(\Omega)$ for $q>N$. By the density of $C_0^\infty(\Omega)$ in $W_0^{1,q}(\Omega)$, we can test $\mathrm{div}\,\mathbf{a}=f$ with $\psi_k$ to get
\begin{equation}\label{P-eq:test-divergence}
-\int_\Omega\left<\mathbf{a},\nabla\psi_k\right>\,dx=\int_\Omega f\psi_k\,dx.
\end{equation}
Hence, in view of \eqref{P-eq:weak-star-normal}, the assertion follows by passing to the limit in \eqref{P-eq:test-divergence} as $k\to\infty$ and using the dominated convergence theorem. 
\end{proof}

\begin{proof}[Proof of Theorem \ref{P-thm:Pohozaev}]
Applying Lemma \ref{P-lem:loc-Pohozaev} together with Lemma \ref{P-lem:divergence}, one can immediately get the desired integral identity.
\end{proof}

\section{Proof of Theorem \ref{thm:over-log-cone}}\label{sec:proof}
In this section we prove Theorem \ref{thm:over-log-cone}. At the end of the section, we also present the proof of  Proposition \ref{pro:observe} although it follows easily via Lemmas \ref{pre-lem:pro-H} and \ref{pre-lem:pro-H-con}.

We start by showing that the solvability of overdetermined problem \eqref{eq:log-cone}--\eqref{eq:asym-log-cone} with \eqref{eq:over-Neumann} implies a priori relation between the value of $C$ given in \eqref{eq:over-Neumann} and the anisotropic perimeter of the set $\Omega$ relative to $\Sigma$. Throughout the section, we denote $\Gamma_R:=\Sigma\cap\partial B_R^{H_0}$ for $R>1$, where $B_R^{H_0}:=B_R^{H_0}(0)$.
\begin{lemma}\label{proof-lem:C}
Let $\Sigma$, $\Omega$, $H$ and $H_0$  be as in Theorem \ref{thm:over-log-cone}. Assume that problem \eqref{eq:log-cone}--\eqref{eq:asym-log-cone} with \eqref{eq:over-Neumann} admits a weak solution $u$ satisfying $\nabla u\in L_{loc}^{\infty}(\overline{\Sigma}\cap\Omega^c)$. Then $$\left(\frac{C}{\gamma}\right)^{N-1}=\frac{P_H(B^{H_0}_1;\Sigma)}{P_H(\Omega;\Sigma)},$$
where $\gamma$ is as in Proposition \ref{cap-pro:asym-equi}.
\end{lemma}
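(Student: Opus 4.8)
The plan is to integrate the divergence-form equation $\mathrm{div}(a(\nabla u))=0$ over the bounded region $\Sigma\cap(B_R^{H_0}\setminus\overline{\Omega})$ and let $R\to\infty$, extracting the boundary contributions on $\Gamma_0$, on $\Gamma_1\cap B_R^{H_0}$, and on $\Gamma_R$. First I would fix $R>1$ large enough that $\overline{\Omega}\subset B_R^{H_0}$, and apply the divergence theorem of Lemma \ref{P-lem:divergence} with $\mathbf{a}=a(\nabla u)$ and $f=0$ on the Lipschitz set $\Sigma\cap(B_R^{H_0}\setminus\overline{\Omega})$ — here one must first note that $a(\nabla u)\in C^0(\overline{\Sigma}\cap\Omega^c)$, which follows from $\nabla u\in L^\infty_{loc}$ together with the $C^1$ interior/boundary regularity recalled after Theorem \ref{thm:over-log-cone}, and that on the possibly non-Lipschitz portions where $\partial\Omega$ meets $\partial\Sigma$ one may approximate $\Sigma$ by Lipschitz sets as in the proof of Proposition \ref{pre-pro:second-order} (this is where the glueing hypothesis $\nabla u\in L^\infty_{loc}(\overline\Sigma\cap\Omega^c)$ is used). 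On $\Gamma_1$ the Neumann condition $\langle a(\nabla u),\nu\rangle=0$ kills that term, so I obtain
$$\int_{\Gamma_R}\langle a(\nabla u),\nu\rangle\,d\mathcal{H}^{N-1}=\int_{\Gamma_0}\langle a(\nabla u),\nu\rangle\,d\mathcal{H}^{N-1},$$
with $\nu$ the outer normal of $\Sigma\cap(B_R^{H_0}\setminus\overline\Omega)$; note this quantity is then independent of $R$.

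Next I would evaluate the $\Gamma_0$ integral using the overdetermined condition \eqref{eq:over-Neumann}. On $\Gamma_0=\Sigma\cap\partial\Omega$ we have $u=0$, so $\nabla u$ is parallel to the inner normal of $\Omega$, i.e. to $-\nu$; writing $\nabla u=-|\nabla u|\,\nu$ and using $a(\xi)=H^{N-1}(\xi)\nabla H(\xi)$ together with $H$ being $1$-homogeneous, one gets $\langle a(\nabla u),\nu\rangle = -H^{N-1}(\nabla u)\langle\nabla H(\nabla u),\nu_\Omega\rangle$ where $\nu_\Omega$ is the outer normal of $\Omega$; more cleanly, since $\nabla u=-|\nabla u|\nu_\Omega$ points into $\Omega^c$, homogeneity gives $H(\nabla u)=|\nabla u|H(-\nu_\Omega)$ and $\langle a(\nabla u),\nu_\Omega\rangle = -H^{N-1}(\nabla u)\,\langle\nabla H(-\nu_\Omega),-\nu_\Omega\rangle\cdot(-1)=\dots$. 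The upshot I want is the identity $\langle a(\nabla u),\nu\rangle\,d\mathcal H^{N-1} = H^{N-1}(\nabla u)\,H(\nu)\,d\mathcal H^{N-1}$ on $\Gamma_0$ (with $\nu$ the outer normal of the exterior domain, which is the inner normal of $\Omega$), hence by \eqref{eq:over-Neumann}
$$\int_{\Gamma_0}\langle a(\nabla u),\nu\rangle\,d\mathcal{H}^{N-1}=C^{N-1}\int_{\Sigma\cap\partial\Omega}H(\nu_\Omega)\,d\mathcal{H}^{N-1}=C^{N-1}P_H(\Omega;\Sigma),$$
using the representation \eqref{pre-eq:def-a-perimeter} of the anisotropic perimeter.

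For the $\Gamma_R$ integral I would use the asymptotic expansion from Proposition \ref{cap-pro:asym-equi}: writing $u=\gamma\ln H_0(x)+\beta+o(1)$ with $H_0(x)H(\nabla(u-\gamma\ln H_0))\to0$, the leading term of $\nabla u$ on $\Gamma_R$ is $\gamma\nabla\ln H_0(x)=\gamma\,\nabla H_0(x)/H_0(x)$, so $a(\nabla u)\approx \gamma^{N-1}a(\nabla H_0(x))/H_0(x)^{N-1}$ with an $o(H_0^{-(N-1)})$ correction. Since $\mathcal H^{N-1}(\Gamma_R)\sim c\,R^{N-1}$, the correction contributes $o(1)$ and one is left with evaluating $\gamma^{N-1}R^{-(N-1)}\int_{\Gamma_R}\langle a(\nabla H_0),\nu\rangle\,d\mathcal H^{N-1}$. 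Now $a(\nabla H_0(x))=H^{N-1}(\nabla H_0)\nabla H(\nabla H_0)=\nabla H(\nabla H_0(x))$ by \eqref{pre-eq:pro-H-con-1}, and $\mathrm{div}\big(a(\nabla\ln H_0)\big)=0$ away from the origin (as $\ln H_0$ solves \eqref{eq:log-cone}, cf. Proposition \ref{pro:observe}), so $\int_{\Gamma_R}\langle a(\nabla\ln H_0),\nu\rangle\,d\mathcal H^{N-1}$ is independent of $R$; taking $R=1$ and arguing as on $\Gamma_0$ above — on $\Gamma_1=\partial B_1^{H_0}$, $x=\nabla H_0$-scaled so $\nabla\ln H_0$ is outward-parallel and $H(\nu)=\dots$ — identifies it with $P_H(B_1^{H_0};\Sigma)$. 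Combining the three evaluations gives $\gamma^{N-1}P_H(B_1^{H_0};\Sigma)=C^{N-1}P_H(\Omega;\Sigma)$, i.e. the claim.

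The main obstacle I expect is the rigorous justification of the divergence theorem on $\Sigma\cap(B_R^{H_0}\setminus\overline\Omega)$ when the set is not Lipschitz near $\partial\Omega\cap\partial\Sigma$, and the careful passage to the limit on $\Gamma_R$ where one must control the $o(1)$ error uniformly — this is exactly the "approximate non-Lipschitz sets by Lipschitz sets converging in boundary integrals" difficulty flagged in the introduction, and it relies essentially on the hypothesis $\nabla u\in L^\infty_{loc}(\overline\Sigma\cap\Omega^c)$ and on Proposition \ref{pre-pro:second-order} to make sense of the boundary traces of $a(\nabla u)$.
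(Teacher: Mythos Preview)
Your plan is essentially the paper's own proof: approximate the non-Lipschitz region $\Sigma\cap(B_R^{H_0}\setminus\overline\Omega)$ by Lipschitz sets, apply the generalized divergence theorem (Lemma~\ref{P-lem:divergence}) to $a(\nabla u)$, kill the $\Gamma_1$ contribution via the Neumann condition, and evaluate the $\Gamma_0$ and $\Gamma_R$ integrals using respectively \eqref{eq:over-Neumann} and the asymptotics of Proposition~\ref{cap-pro:asym-equi}. Two small caveats worth fixing. First, on $\Gamma_0$ one has $u=0$ and $u>0$ in the exterior, so $\nabla u$ is parallel to the \emph{outer} normal $\nu_\Omega$ of $\Omega$, not the inner one as you wrote; this matters here because $H$ is not assumed even, and you need $H(\nu_\Omega)$ (not $H(-\nu_\Omega)$) to identify the integral with $C^{N-1}P_H(\Omega;\Sigma)$. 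Second, the approximation you cite from Proposition~\ref{pre-pro:second-order} uses smooth convex sets $\Sigma_k\supset\Sigma$, whereas the paper approximates from \emph{inside} by $F_k\subset\Sigma$ (the construction from \cite{CRS2016}) so that $u\in C^1$ on $\overline{F_k\cap(B_R^{H_0}\setminus\overline\Omega)}$ and dominated convergence, together with $\nabla u\in L^\infty_{loc}(\overline\Sigma\cap\Omega^c)$, justifies the passage to the limit in the boundary integrals.
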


\begin{proof}
We can assume $\gamma=1$ (otherwise consider $u/\gamma$). Let $R>1$ be such that $B^{H_0}_R\supset\overline{\Omega}$. The idea is to obtain an integral identity on $\Gamma_0\cup\Gamma_R$ by applying Lemma \ref{P-lem:divergence}, and then let $R\to\infty$ in the identity by exploiting the asymptotic behavior at infinity of $u$ given in Proposition \ref{cap-pro:asym-equi}. Since $u$ is not $C^1$ up to the whole boundary, we will argue by approximation. 

Without loss of generality, we assume that the $N$-th direction vector $e_n$ belongs to $\Sigma$ and $$\Sigma=\{x_N>g(x_1,\cdots,x_{N-1})\}$$
for some convex function $g:\mathbb{R}^{N-1}\to\mathbb{R}$. From \cite[Proof of Theorem 1.3]{CRS2016}, there exists a sequence of smooth convex functions $g_k:\mathbb{R}^{N-1}\to\mathbb{R}$ with $k\in\mathbb{N}\setminus\{0\}$ such that the sets given by $$F_k:=\{x_N>g_k(x_1,\cdots,x_{N-1})\}$$
belong to $\Sigma$ and satisfy that $F_k\cap(B_R^{H_0}\setminus\overline{\Omega})$ are Lipschitz ($\partial F_k$ and $\partial(B_R^{H_0}\setminus\overline{\Omega})$ intersect transversally) and they approximate $\Sigma\cap(B_R^{H_0}\setminus\overline{\Omega})$ in the $L^1$ sense. In particular, the functions $g_k$ fulfill:
\begin{itemize}
\item[(i)] $g_{k+1}<g_{k}$ in $\overline{B}$ where $B\subset\mathbb{R}^{N-1}$ is a large ball containing the projection of $\overline{B_R^{H_0}}\setminus\Omega$;
\item[(ii)] $g_k\to g$ uniformly in $\overline{B}$;
\item[(iii)] $|\nabla g_k|$ is uniformly bounded and $\nabla g_k\to\nabla g$ a.e. in $\overline{B}$.  
\end{itemize}

Notice that $u\in C^1\left(\overline{F_k\cap(B_R^{H_0}\setminus\overline{\Omega})}\right)$. From Lemma \ref{P-lem:divergence} we have
\begin{equation}\label{proof-eq:local-diver}
\int_{\partial F_k\cap(B_R^{H_0}\setminus\overline{\Omega})}\left<a(\nabla u),\nu\right>d\mathcal{H}^{N-1}+\int_{F_k\cap\partial(B_R^{H_0}\setminus\overline{\Omega})}\left<a(\nabla u),\nu\right>d\mathcal{H}^{N-1}=0.
\end{equation}
 By Proposition \ref{pre-pro:second-order}, we see $\left<a(\nabla u),\nu\right>=0$ a.e. on $\partial\Sigma\cap(B_R^{H_0}\setminus\overline{\Omega})$. Since $\nabla u\in C^0\cap L^\infty(\Sigma\cap(B_R^{H_0}\setminus\overline{\Omega}))$, by using dominated convergence and properties (ii)--(iii) we deduce that
\begin{equation}\label{proof-eq:appr-k-cone}
\int_{\partial F_k\cap(B_R^{H_0}\setminus\overline{\Omega})}\left<a(\nabla u),\nu\right>d\mathcal{H}^{N-1}\to\int_{\partial \Sigma\cap(B_R^{H_0}\setminus\overline{\Omega})}\left<a(\nabla u),\nu\right>d\mathcal{H}^{N-1}=0.
\end{equation}
Moreover, by properties (i)--(ii) we obtain
\begin{equation}\label{proof-eq:appr-k-O}
\int_{F_k\cap\partial(B_R^{H_0}\setminus\overline{\Omega})}\left<a(\nabla u),\nu\right>d\mathcal{H}^{N-1}\to\int_{\Sigma\cap\partial(B_R^{H_0}\setminus\overline{\Omega})}\left<a(\nabla u),\nu\right>d\mathcal{H}^{N-1}.
\end{equation}
Hence, by combining \eqref{proof-eq:local-diver} with \eqref{proof-eq:appr-k-cone} and \eqref{proof-eq:appr-k-O}, we  arrive at
$$\int_{\Sigma\cap\partial(B_R^{H_0}\setminus\overline{\Omega})}\left<a(\nabla u),\nu\right>d\mathcal{H}^{N-1}=0.$$

Now, from \eqref{cap-eq:asym-D-equi}, we infer that
\begin{gather*}
\nabla u=\nabla(\ln H_0(x))+o(H_0^{-1}(x))\\
H(\nabla u)=H_0^{-1}(x)+o(H_0^{-1}(x))
\end{gather*}
uniformly for $x\in\Gamma_R$, as $R\to\infty$. So that
\begin{align*}
&\int_{\Gamma_0}\left<a(\nabla u),\nu\right>d\mathcal{H}^{N-1}\\
&=\int_{\Gamma_R}\left<a(\nabla u),\nu\right>d\mathcal{H}^{N-1}\\
&=\int_{\Gamma_R}\left<a(\nabla u),\frac{\nabla H_0(x)}{|\nabla H_0(x)|}\right>d\mathcal{H}^{N-1}\\
&=R\int_{\Gamma_R}\left<a(\nabla u),\frac{\nabla u+o(H_0^{-1}(x))}{|\nabla H_0(x)|}\right>d\mathcal{H}^{N-1}\\
&=R\int_{\Gamma_R}H^{N-1}(\nabla u)\frac{H(\nabla u)+o(R^{-1})}{|\nabla H_0(x)|}d\mathcal{H}^{N-1}\\
&=\int_{\Gamma_R}\left(R^{-1}+o(R^{-1})\right)^{N-1}(1+o(1))H(\nu)\,d\mathcal{H}^{N-1}.
\end{align*} 
Letting $R
\to\infty$, we obtain that the right-hand side of the above equality becomes $P_H(B^{H_0}_1;\Sigma)$. On the other hand, since $\nu=\frac{\nabla u}{|\nabla u|}$ on $\Gamma_0$, the left-hand side is 
\begin{equation*}
\int_{\Gamma_0}H^{N-1}(\nabla u)\left<\nabla H(\nabla u),\frac{\nabla u}{|\nabla u|}\right>d\mathcal{H}^{N-1}=C^{N-1}P_H(\Omega;\Sigma).
\end{equation*}
Consequently, we get $C^{N-1}P_H(\Omega;\Sigma)=P_H(B^{H_0}_1;\Sigma)$ if $\gamma=1$. Then the conclusion for general $\gamma>0$ follows easily. This completes the proof.
\end{proof}

Using Lemma \ref{proof-lem:C} and Theorem \ref{P-thm:Pohozaev}, together with Theorem \ref{pre-thm:Wulff-cone}, we are now in position to prove Theorem \ref{thm:over-log-cone}. 
\begin{proof}[Proof of Theorem \ref{thm:over-log-cone}]
Let $$V_N(x)=H^{N-1}(\nabla u)\left<x,\nabla u\right>\nabla H(\nabla u)-\frac{1}{N}xH^N(
\nabla u).$$
Notice that $\left<V_N,\nu\right>=0$ a.e. on $\Gamma_1$. For any $R>1$ such that $B^{H_0}_R\supset\overline{\Omega}$, we aim at applying Theorem \ref{P-thm:Pohozaev} to equation \eqref{eq:log-cone} in $\Sigma\cap(B_R^{H_0}\setminus\overline\Omega)$. Due to the lack of regularity of $u$, we approximate $\Sigma\cap(B_R^{H_0}\setminus\overline\Omega)$ by a sequence of Lipschitz domains $F_k\cap(B_R^{H_0}\setminus\overline{\Omega})$ as done in the proof of Lemma \ref{proof-lem:C}. Since $u\in C^1\left(\overline{F_k\cap(B_R^{H_0}\setminus\overline{\Omega})}\right)$, by applying Theorem \ref{P-thm:Pohozaev} we get 
\begin{equation}\label{proof-eq:local-Poho}
\int_{\partial F_k\cap(B_R^{H_0}\setminus\overline{\Omega})}\left<V_N,\nu\right>d\mathcal{H}^{N-1}+\int_{F_k\cap\partial(B_R^{H_0}\setminus\overline{\Omega})}\left<V_N,\nu\right>d\mathcal{H}^{N-1}=0.
\end{equation}
Furthermore, by arguing as in the proof of Lemma \ref{proof-lem:C}, we can take the limit of \eqref{proof-eq:local-Poho} as $k\to\infty$ to deduce that
\begin{equation}\label{proof-eq:P-V-k}
\int_{\Gamma_0}\left<V_N,\nu\right>d\mathcal{H}^{N-1}-\int_{\Gamma_R}\left<V_N,\nu\right>d\mathcal{H}^{N-1}=0.
\end{equation}

On the one hand, using the fact that $\nu=\frac{\nabla u}{|\nabla u|}$ on $\Gamma_0$ and that $\int_{\Gamma_0}\left<x,\nu\right>=N\mathcal{H}^N(\Sigma\cap\Omega)$ yields 
\begin{align*}
&\int_{\Gamma_0}\left<V_N,\nu\right>d\mathcal{H}^{N-1}\\
&=\int_{\Gamma_0}\left(H^{N}(\nabla u)\left<x,\nu\right>-\frac{H^N(\nabla u)}{N}\left<x,\nu\right>\right)d\mathcal{H}^{N-1}\\
&=(N-1)C^N\mathcal{H}^N(\Sigma\cap\Omega).
\end{align*}
On the other hand, via Proposition \ref{cap-pro:asym-equi} we see there exists $\gamma>0$ such that
\begin{equation*}
H(\nabla u)=\gamma H_0^{-1}(x)+o(H_0^{-1}(x))\quad\text{and}\quad
\left<x,\nabla u\right>=\gamma+o(1),
\end{equation*}
uniformly for $x\in\Gamma_R$, as $R\to\infty$. Similar to the proof of Lemma \ref{proof-lem:C}, a direct computation entails
$$
\int_{\Gamma_R}\left<V_N,\nu\right>d\mathcal{H}^{N-1}\to\frac{\gamma^N(N-1)}{N}P_H(B^{H_0}_1;\Sigma),
$$
as $R\to\infty$.

Consequently, from \eqref{proof-eq:P-V-k} we arrive at 
$$C^N\mathcal{H}^N(\Sigma\cap\Omega)=\frac{\gamma^N}{N}P_H(B^{H_0}_1;\Sigma).$$
By recalling that $P_H(B^{H_0}_1;\Sigma)=N\mathcal{H}^N(\Sigma\cap B_1^{H_0})$ (see \cite[Formula (1.14)]{CRS2016}) and using
Lemma \ref{proof-lem:C}, we conclude
$$\frac{P_{H}(\Omega;\Sigma)}{\mathcal{H}^N(\Sigma\cap\Omega)^{\frac{N-1}{N}}}= \frac{P_H(B_1^{H_0};\Sigma)}{\mathcal{H}^N(\Sigma\cap B_1^{H_0})^{\frac{N-1}{N}}}.$$ That is, $\Sigma\cap\Omega$ satisfies the equality case of Theorem \ref{pre-thm:Wulff-cone}. This forces that $\Sigma\cap\Omega=\Sigma\cap B_R^{H_0}(x_0)$ for some $R>0$ and $x_0\in\overline{\Sigma}$ as described in the theorem. Then the representation of $u$ and its uniqueness are concluded from Proposition \ref{pro:observe}. This finishes the proof.
\end{proof}

\begin{proof}[Proof of Proposition \ref{pro:observe}]
Let $u$ be as in \eqref{intro-eq:formula-u}. First, $u=0$ on $\Gamma_0$ holds trivially since $H_0(x-x_0)=R$ on $\Gamma_0$. From Lemma \ref{pre-lem:pro-H-con}, we see that $u\in C^2(\mathbb{R}^N\setminus\{x_0\})$ and $$\nabla u=CR\frac{\nabla H_0(x-x_0)}{H_0(x-x_0)}.$$
By \eqref{pre-eq:pro-H-con-1}, $$H(\nabla u)=\frac{CR}{H_0(x-x_0)}=C\quad\text{on }\Gamma_0,$$
that is, the condition \eqref{eq:over-Neumann} holds. Moreover, by Lemma \ref{pre-lem:pro-H} and \eqref{pre-eq:pro-H-con-1}--\eqref{pre-eq:pro-H-con-2}, we get 
\begin{align*}
a(\nabla u)&=(CR)^{N-1}H_0^{1-N}(x-x_0)\nabla H(\nabla H_0(x-x_0))\\
&=(CR)^{N-1}H_0^{1-N}(x-x_0)\frac{x-x_0}{H_0(x-x_0)}\\
&=(CR)^{N-1}\frac{x-x_0}{H_0^N(x-x_0)}.
\end{align*}
Clearly, $\left<a(\nabla u),\nu\right>=\left<x-x_0,\nu\right>=0$ a.e. on $\Gamma_1$. Next, let us verify that $u$ satisfies the equation \eqref{eq:log-cone}. It follows from Lemma \ref{pre-lem:pro-H} that
\begin{align*}
&\mathrm{div}\,(\frac{x-x_0}{H_0^N(x-x_0)})\\
=&H_0^{-N}(x-x_0)\mathrm{div}(x-x_0)+\left<x-x_0,(-N)H_0^{-N-1}\nabla H_0(x-x_0)\right>\\
=&NH_0^{-N}(x-x_0)-NH_0^{-N}(x-x_0)=0.
\end{align*}
This implies $\Delta_N^H u=\mathrm{div}\,(a(\nabla u))=0$ for $x\neq x_0$. 

Finally, the uniqueness of $u$ follows from Theorem \ref{cap-thm:F-log}.
\end{proof}

\section*{Acknowledgements}
The first author has been partially supported by the ``Gruppo Nazionale per l'Analisi Matematica, la Probabilit\`a e le loro Applicazioni'' (GNAMPA) of the ``Istituto Nazionale di Alta Matematica'' (INdAM, Italy). The second author is supported by China Scholarship Council. This work has been done while the second author was visiting the Department of Mathematics ``Federigo Enriques'' of Universit\`a degli Studi di Milano, which is acknowledged for the hospitality. 

The authors thank Giorgio Poggesi for bringing up to their attention the reference \cite{Xia-Yin-2021}.

\end{document}